\numberwithin{equation}{section}
\newtheorem{theorem}{Theorem}[section]
\newtheorem{lemma}[theorem]{Lemma}
\newtheorem{corollary}[theorem]{Corollary}
\newtheorem{remark}[theorem]{Remark}
\newtheorem{proposition}[theorem]{Proposition}
\newtheorem{definition}[theorem]{Definition}
\newtheorem{assumption}[theorem]{Assumption}
\newcommand{\dd}{\,\mathrm{d}}
\renewcommand{\d}{\mathrm{d}}
\renewcommand{\epsilon}{\varepsilon}
\newcommand{\R}{\mathbb{R}}
\newcommand{\N}{\mathbb{N}}
\renewcommand{\P}{\mathbb{P}}
\newcommand{\E}{\mathbb{E}}
\newcommand{\indicator}[1]{\mathbbm{1}_{#1}}
\newcommand{\Id}{\,\mathrm{d}}
\newcommand{\norm}[1]{\left\lVert#1\right\rVert}
\newcommand{\qv}[1]{\langle #1 \rangle}
\newcommand{\testfunctions}[1]{C_c^{\infty}(#1)}
\newcommand{\cF}{\mathcal{F}}
\title{Hegselmann--Krause model with environmental noise}
\author[Chen]{Li Chen}
\address{Li Chen, University of Mannheim, Germany}
\email{chen@uni-mannheim.de}
\author[Nikolaev]{Paul Nikolaev}
\address{Paul Nikolaev, University of Mannheim, Germany}
\email{pnikolae@mail.uni-mannheim.de}
\author[Pr{\"o}mel]{David J. Pr{\"o}mel}
\address{David J. Pr{\"o}mel, University of Mannheim, Germany}
\email{proemel@uni-mannheim.de}
\date{\today}
\begin{document}

\begin{abstract}
  We study a continuous-time version of the Hegselmann--Krause model describing the opinion dynamics of interacting agents subject to random perturbations. Mathematically speaking, the opinion of agents is modelled by an interacting particle system with a non-Lipschitz continuous interaction force, perturbed by idiosyncratic and environmental noises. Sending the number of agents to infinity, we derive a McKean--Vlasov stochastic differential equation as the limiting dynamic, by establishing propagation of chaos for regularized versions of the noisy opinion dynamics. To that end, we prove the existence of a unique strong solution to the McKean--Vlasov stochastic differential equation as well as well-posedness of the associated non-local, non-linear stochastic Fokker--Planck equation.
\end{abstract}

\maketitle

\noindent \textbf{Key words:} interacting particle system, McKean--Vlasov equation, mean-field limit, propagation of chaos, stochastic Fokker--Planck equation, stochastic partial differential equation.

\noindent \textbf{MSC 2010 Classification:} Primary: 60H15, 60H10, 60K35; Secondary: 91D30.
% Classification
% --------------
% 60H15 Stochastic analysis (Stochastic partial differential equations (aspects of stochastic analysis))
% 60H10 Stochastic analysis (Stochastic ordinary differential equations (aspects of stochastic analysis))
% 60K35 Special processes (Interacting random processes; statistical mechanics type models; percolation theory)
% 91D30 Mathematical sociology (Social networks; opinion dynamics)

%\tableofcontents

\section{Introduction}

The theory of opinion dynamics has been well-studied since the 1950s, but over the last few decades, the modeling of opinion dynamics has become a rapidly growing area of research. With the rapid development of the internet and social networks, we have observed significant changes in how opinion dynamics evolve and by what sources they are effected. For example, previous generations were heavily influenced by their geographically nearest social group, but nowadays social networks play a dominant role for expressing and sharing opinions, enabling more people than ever to do so from anywhere in the world.  Consequently,  the significance of the geographical distance as a factor in shaping public opinion diminishes. Instead, each citizen has a personal filter bubble \cite{fake_news_ideology}, which affects and in the same time shifts with the opinion. This phenomena is described by so called bounded confidence opinion dynamics. For an overview of opinion dynamics we refer to the surveys \cite{LORENZ_2007_bounded_confidence_survey, noorazar2020recent}.

In the present paper we study the Hegselmann--Krause model (HK model) \cite{hegselmann2002opinion}, which belongs to the class of bounded confidence opinion dynamics. More precisely, we focus on a version of the HK model where the opinions \(\mathbf{X}^N := (X^{i}, i = 1,\dots, N)\) of $N$ agents are subject to idiosyncratic noises as well as an environmental noise, i.e., we consider for \(i=1,\ldots,N \) the particle system
\begin{equation}\label{eq:HR model intro}
  \d X_t^{i} = -\frac{1}{N-1} \sum\limits_{\substack{j=1 \\j \neq i }}^N k_{\scriptscriptstyle{HK}}(X_t^{i} -X_t^{j}) \dd t + \sigma(t,X_t^{i}) \dd B_t^{i} + \nu \dd W_t,
  \quad   X_0^{i} = \zeta_i,
\end{equation}
for $t\geq 0$, where \(X_t^{i}\) is the \(i\)-th agent's opinion at time \(t\), \(k_{\scriptscriptstyle{HK}}(x):=\indicator{[0,R]}(|x|)x \) is the (non-Lipschitz) interaction force between the agents, \(\sigma\colon  [0,T] \times \R \mapsto \R \) and $\nu>0$ some smooth diffusion coefficients, \(B=((B_t^{i}, t \ge 0 ) , i \in \N )\) is a sequence of one-dimensional independent Brownian motions, \(W=(W_t,t\ge 0)\) is a Brownian motion independent of \(B\), and \((\zeta_i, i \in \N)\) is the i.i.d. sequence of initial values independent of all Brownian motions. The local interaction kernel~$k_{\scriptscriptstyle{HK}}$ represents the insight of bounded confidence opinion dynamics that opinions are only influenced in a bounded domain. In the HK model~\eqref{eq:HR model intro}, the idiosyncratic noises \(B=((B_t^{i}, t \ge 0 ) , i \in \N )\) describe the individual random effects on each agent's opinion and the environmental noise  \((W_t, t \ge 0)\) captures external effects on the agents' opinions. For a more detailed discussion on different types of noises in HK models we refer to \cite{chen2019heterogeneous} and the references therein. Additionally, we will recall some background information on the HK model at the beginning of Section~\ref{sec:HK model}.

Our goal is to establish propagation of chaos of the Hegselmann--Krause model with environmental noise. More precisely, we show that regularized versions of the particle systems~\eqref{eq:HR model intro} converge (in a suitable sense) to the McKean--Vlasov stochastic differential equation (SDE)
\begin{align}\label{eq: introduction_mean_field_trajectories_with_common_noise}
  \begin{cases}
  \Id Y_t = -  (k_{\scriptscriptstyle{HK}} * \rho_t)(Y_t) \Id t + \sigma(t,Y_t) \Id B_t^{1} + \nu \Id W_t, \quad Y_0=X_0^{1} , \\
  \rho_t \; \mathrm{is \, the \, conditional \, density \, of} \, Y_t \, \mathrm{given} \, \cF_t^W,
  \end{cases}
\end{align}
which comes with the associated stochastic non-linear, non-local Fokker--Planck equation
\begin{equation}\label{eq: introduction_hk_spde}
  \Id \rho_t = \frac{\dd^2}{\dd x^2}  \Big( \frac{\sigma_t^2 + \nu_t^2 }{2} \rho_t \Big) \Id t + \frac{\dd}{\dd x} ((k_{\scriptscriptstyle{HK}} *\rho_t)\rho_t)  \Id t - \nu \frac{\dd}{\dd x} \rho_t \Id W_t,\quad t\geq 0.
\end{equation}
Let us remark that equation~\eqref{eq: introduction_hk_spde} is a non-local, non-linear stochastic partial differential equation (SPDE), where the stochastic term is a consequence of the environmental noise \(W=(W_t, t \ge 0)\). Indeed, as we shall see, if the number of agents tends to infinity the effect of the idiosyncratic noises averages out, but the environmental noise does not. Moreover, in contrast to many recent works like \cite{CoghiFlandoli2016,Coghi2019,Hammersley2020,Coghi2020,Bailleul2021} on interacting particle systems with environmental noise, we avoid the measure-valued setting and deal with the conditional McKean--Vlasov SDE and the associated Fokker--Planck equations by directly analysing the densities.

Our first contribution is to prove the well-posedness of the non-local, non-linear stochastic Fokker--Planck equation~\eqref{eq:  introduction_hk_spde}. The main challenge in proving existence and uniqueness results for \eqref{eq: introduction_hk_spde} is the non-linear term \((k_{\scriptscriptstyle{HK}} *\rho_t)\rho_t \) since this prevents us from applying known results  in the existing literature on SPDEs, such as those found in textbooks~\cite{Krylov1999AnAA,LiuWei2015SPDE}, which consider the well-studied case of linear SPDEs. In the case of non-linear SPDEs, one needs to take advantage of the specific structure of the considered SPDE to employ a fixed point argument, cf. e.g. the recent work \cite{HuangQui2021}. In this line of research, we establish local existence and uniqueness of a weak solution to the non-local, non-linear Fokker--Planck equation~\eqref{eq: introduction_hk_spde}. Additionally, we show global well-posedness of the Fokker--Planck equation~\eqref{eq: introduction_hk_spde} assuming a sufficiently large diffusion coefficient or a sufficiently small \(L^2\)-norm of the initial value.

Our second contribution is to prove the existence of a unique strong solution to the McKean--Vlasov stochastic differential equation~\eqref{eq: introduction_mean_field_trajectories_with_common_noise}, which is essential for showing propagation of chaos towards to limiting Fokker--Planck equation~\eqref{eq: introduction_hk_spde}. To obtain the well-posedness of the McKean--Vlasov SDE~\eqref{eq: introduction_mean_field_trajectories_with_common_noise}, a central insight is to introduce suitable stopping times to ensure sufficient temporary regularity such that a backward stochastic partial differential equation (BSPDE) associated to \eqref{eq: introduction_mean_field_trajectories_with_common_noise} possesses a classical solution, cf. e.g. \cite{DuTang2013}. As a result, a duality argument in combination with the It{\^o}--Wentzell formula allows us to deduce the existence of a unique strong solution to the McKean--Vlasov equation~\eqref{eq: introduction_hk_spde}. Let us point out that the aforementioned existence and uniqueness results for the stochastic Fokker--Planck equation~\eqref{eq: introduction_hk_spde} and the McKean--Vlasov equation~\eqref{eq: introduction_mean_field_trajectories_with_common_noise} also hold for general interaction forces \(k \in L^1(\R) \cap L^2(\R)\) and non-degenerate diffusion coefficient \(\sigma \in C^3(\R)\).

Our third contribution is to establish propagation of chaos for regularized versions of the particle systems~\eqref{eq:HR model intro} with environmental noise, which verifies that \eqref{eq: introduction_hk_spde} is, indeed, the macroscopic (density based) model corresponding to the microscopic (agent based) opinion dynamics described by the Hegselmann--Krause model with environmental noise. For uniformly Lipschitz interaction forces, propagation of chaos with environmental noise has been showed by Coghi and Flandoli \cite{CoghiFlandoli2016} by utilizing sharp estimates in Kolmogorov's continuity theorem and properties of measure-valued solutions of the associated stochastic Fokker--Planck equation. In particular, the approach is from an SDE level to an SPDE level~\cite{CoghiFlandoli2016,Coghi2019,Shkolnikov2023}. In contrast we first solve the SPDE and then prove that the associated SDEs are well-posed. Moreover, without environmental noise there is a vast literature on propagation of chaos with non-Lipschitz kernels, see for instance \cite{snitzman_propagation_of_chaos, godinho_Keller_Segel2015, LiuYang2016, Fournier2017, JabinWang2018, Laker2018}. However, most of these works are based on the relative entropy and cannot be simply generalized to a setting with environmental noise, i.e. stochastic framework. For particle systems with environmental noise and non-Lipschitz interaction force~\(k \) (as in our case), to the best of our knowledge there exists no general theory on stochastic McKean-Vlasov equations or on propagation of chaos. In order to derive propagation of chaos for the HK model~\eqref{eq: introduction_mean_field_trajectories_with_common_noise}, we essentially rely on the well-posedness of the McKean--Vlasov SDE~\eqref{eq: introduction_mean_field_trajectories_with_common_noise} and follow \cite{snitzman_propagation_of_chaos} as well as \cite{lazarovici2017mean,chen2021rigorous} to prove that \( (\rho_t , t \ge 0) \) characterizes the measure of the mean-field limit. Initially, all results are formulated in dimension one since this is essential to establish the well-posedness of the McKean--Vlasov equation~\eqref{eq: introduction_mean_field_trajectories_with_common_noise}, see the BSPDE argument in Section~\ref{sec: mean_field_sde} and in particular~\cite[Corollary~2.2]{DuTang2013}. For all other results, we provide corollaries providing the multi-dimensional versions.

\medskip

\noindent \textbf{Organization of the paper:} In Section~\ref{sec:HK model} we introduce the Hegselmann--Krause model with environmental noise and provide necessary definitions and some background information. In Section~\ref{sec:FP equation} the well-posedness of the stochastic Fokker--Planck equation~\eqref{eq: introduction_hk_spde} is established and in Section~\ref{sec: mean_field_sde} of the associated McKean--Vlasov equation~\eqref{eq: introduction_mean_field_trajectories_with_common_noise}. The mean-field limit and propagation of chaos of the HR model with environmental noise are investigated in Section~\ref{sec:mean field limit}.

\section{Hegselmann--Krause model}\label{sec:HK model}

In the following section we introduce the Hegselmann--Krause model and briefly review some of the related literature on the well studied HK model without environmental noise. The Hegselmann--Krause model with environmental noise is set up in Subsection~\ref{subsec: HK model} with its corresponding stochastic Fokker--Planck equation as well as its mean-field equation. We conclude this section with some basic definitions and functions spaces necessary to study the involved equations.

\subsection{Some background information}

In this subsection we present some background on the development of the HK model as well as on the terminology of propagation of chaos. We start with the original discrete-time Hegselsmann--Krause model~\cite{hegselmann2002opinion}, which is given by
\begin{equation}\label{eq:original HK model}
  x_{i}(t+1) = \frac{1}{|\mathscr{N}_{i}(t)|} \sum\limits_{j \in  \mathscr{N}_{i}(t)} x_j(t) ,\quad t \ge 0 , \quad i = 1, \ldots, n ,
\end{equation}
where \(x_{i}(t)\) is the opinion of agent \(i\) at time \(t\), \(\mathscr{N}_{i}(t):= \{ 1 \le j \le n  :  |x_i(t)-x_j(t)| \le r_i \}\) denotes the neighbor set of agent \(i\) at time \(t\) and \(|\mathscr{N}_{i}(t)|\) is the cardinality of the set. The convergence and consensus properties of the discrete-time HK model were extensively  studied in the past years, see for instance \cite{hegselmann2002opinion, lorenz2006consensus, Bhattacharyya2013, 10.1371/journal.pone.0043507,Touri2012}. The main characteristic feature of bounded confidence opinion models, like the HK model~\eqref{eq:original HK model}, is that the agents interact only locally, which is modelled by the compactly supported interaction force in the discrete-time HK model~\eqref{eq:original HK model}, i.e. \(j \in \mathscr{N}_{i}(t)\), and, thus, opinions outside an agent's moral beliefs get ignored through this local interaction kernel. This phenomena is, e.g., observed in case of liberal and conservative view points and their respective social media bubbles in the USA \cite{liberal_conservative,groshek2017helping,fake_news_ideology}. The discrete-time HK model~\eqref{eq:original HK model} is a fairly simple model to describe opinion dynamics and by now there are numerous generalizations and variants of the original HK model, for instance, the HK model with media literacy \cite{media_literacy} or the HK model with an opinion leader \cite{Wongkaew_HK_model_with_leader}. For further extensions we refer to \cite{douven2010extending, riegler2009extending}.

An important class of extensions of the original HK model captures external random effect in opinion dynamics, see e.g. \cite{pineda2013noisy, chen2019heterogeneous}, leading naturally to a system of \(N\) stochastic processes representing the opinion evolution. In this case, following e.g. \cite{garnier2017consensus}, the opinion dynamics $\hat{\mathbf{X}}^N := (\hat{X}^{i}, i = 1,\dots, N)$ of $N$ agents are modelled by a system of stochastic differential equations
\begin{equation}\label{eq: spde_introduction_regularized_particle_system}
  \Id \hat{X}_t^{i} = -\frac{1}{N-1} \sum\limits_{\substack{j=1 \\j \neq i }}^N k_{\scriptscriptstyle{HK}}(\hat{X}_t^{i} -\hat{X}_t^{j}) \Id t + \sigma(t,\hat{X}_t^{i}) \Id B_t^{i}, \quad i=1,\ldots,N ,
  \quad \mathbf{\hat{X}}_0^{N} \sim \overset{N}{\underset{i=1}{\otimes}} \rho_0,
\end{equation}
for $t\geq0$, where \(\hat{X}_t^{i}\) is the \(i\)-th agent's opinion at time \(t\), \(k_{\scriptscriptstyle{HK}}\) is the interaction force between the agents, \(\sigma\) is a smooth diffusion coefficient, \(((B_t^{i}, t \ge 0 ) , i \in \N )\) is a sequence of one-dimensional independent Brownian motion and, as previously, \(\rho_0\) is the initial distribution (of \(\zeta_1\)). We note that the interaction force~$k_{\scriptscriptstyle{HK}}$ has compact support turning the continuous-time HK model~\eqref{eq: spde_introduction_regularized_particle_system} into a bounded confidence model. The continuous-time HK model~\eqref{eq: spde_introduction_regularized_particle_system} has been a topic of active research in the past years, e.g., the convergence to a consensus is studied in \cite{garnier2017consensus} and the phase transition was investigated in \cite{wang2017noisy}.

The particle system~\eqref{eq: spde_introduction_regularized_particle_system} induces in the mean-field limit, i.e. as \(N \to \infty\), a sequence \(((\hat{Y}_t^{i}, t \ge 0 ), i \in \N)\) of i.i.d. stochastic processes satisfying the non-linear, non-local McKean--Vlasov equations
\begin{align} \label{eq: introduction_McKean_Vlasov_for_Hk}
  \begin{cases}
  \Id \hat{Y}_t^{i} = -  (k_{\scriptscriptstyle{HK}} * \hat{\rho_t})(\hat{Y}_t^{i}) \Id t + \sigma(t,\hat{Y}_t^{i} ) \Id B_t^{i},  \quad \hat{Y}_0^{i}=\hat{X}_0^{i},\quad t\geq 0, \\
  \hat{\rho}_t \; \mathrm{is \, the \,  density \, of} \, \hat{Y}_t^{i}.
  \end{cases}
\end{align}
For the specific interaction kernel \(k_{\scriptscriptstyle{HK}} = \indicator{[0,R]}(|x|)x \), each \(\hat{Y}_t^{i}\) corresponds to a well-posed, non-linear Fokker--Planck equation for the agent density profile \(\hat{\rho}(t,x)\) given by
\begin{equation}\label{eq: introduction_Fokker_Planck_for_Hk}
  \partial_t \hat{\rho}(x,t) = \frac{\dd^2 }{\dd x^2} \Big( \frac{\sigma^2}{2} \hat{\rho}_t (x) \Big)+  \frac{\dd }{\dd x}  ((k_{\scriptscriptstyle{HK}}*\hat{\rho}_t)(x) \hat{\rho}_t(x)) ,\quad t\geq 0,
\end{equation}
see \cite{CHAZELLE2017365} for more details.

The given probability measure \(\hat{\rho}(t,x) \Id x \) is classically acquired as the deterministic limit as \(N \to \infty\) of the random measures $\Pi_N$ with values in the space \(\mathcal{P}(C([0,T],\R))\) of probability measures on \(C([0,T],\R)\), defined as
\begin{equation}\label{eq: propagation_of_chaos_empirical_measure}
  \omega \mapsto \Pi_N(\omega,A) := \frac{1}{N} \sum\limits_{i=1}^N \delta_{\hat{X}^{i}_{\cdot}(\omega)}  (A), \quad \quad A \in \mathcal{B}(C([0,T],\R)),
\end{equation}
where \(\delta_f\) is the Dirac measure for \( f \in C([0,T],\R)\). This convergence phenomena is known as propagation of chaos, see e.g. \cite{snitzman_propagation_of_chaos, kac1956foundations}. More precisely, let \(Z = (Z_t, t \ge 0)\) be a continuous \(\R\)-valued stochastic process defined on some probability space such that for each \(t \ge 0\), \(Z_t\) has the law \(\mu_t\). Moreover, let \(\mu_t^N\) be the law of a system \(\mathbf{Z}_t^{N}:= (Z_t^{1}, \ldots, Z_t^{N})\) of $N$ stochastic processes taking values in $\R$. We say that \(\mathbf{Z}_t^N\) is \(\mu_t\)-chaotic, if \(\mu_t^N\) is a symmetric measure and for each fix \(k \in \N\) with \(k \ge 2 \),
\begin{equation}\label{eq: propagation_of_chaos_convergence_condition}
  \mu_t^{k,N} \; \mathrm{converges \; weakly \; to} \; \mu_t \; \mathrm{as}  \; N \to \infty,
\end{equation}
where \(\mu^{k,N}_t(A_1 \times \cdots \times A_k):= \mu^N_t(A_1 \times \cdots \times A_k \times \R \times \cdots \times \R)\) for \(A_1,\ldots, A_k \in \mathcal{B}(\R)\) is the \(k\)-th marginal distribution of \(\mu_t^N\), and we say that propagation of chaos holds if for any time point \(t \ge 0\), \(\mathbf{Z}_t^N\) is \(\mu_t\)-chaotic. It is a classical result that condition~\eqref{eq: propagation_of_chaos_convergence_condition} holds for all \(k\ge 2 \) if and only if condition~\eqref{eq: propagation_of_chaos_convergence_condition} holds for \(k=2\), which is again equivalent to the convergence of the empirical measures \eqref{eq: propagation_of_chaos_empirical_measure} associated to \(\mathbf{Z}\) to the deterministic measure \(\mu\) on \(C([0,T],\R)\), see \cite[Proposition~2.2]{snitzman_propagation_of_chaos}.

It is well-known that if \(k_{\scriptscriptstyle{HK}}\) would satisfy suitable Lipschitz and growth assumptions, the particle system \((\mathbf{\hat{X}}^{N}_t, t \ge 0 )\) satisfies propagation of chaos, see \cite{snitzman_propagation_of_chaos, KURTZ1999}. However, \(k_{\scriptscriptstyle{HK}}\) is not Lipschitz continuous and hence, we can no longer apply the classical theory to obtain the convergence in law of the empirical measure \eqref{eq: propagation_of_chaos_empirical_measure}. Consequently, it requires novel and non-standard methods. An idea, which has been developed for different interacting particle systems without environmental noise in recent years, is to introduce a smooth approximation~\(k_{\scriptscriptstyle{HK}}^N\), which depend on the number of agents~\(N\) and converges in some sense to~\(k_{\scriptscriptstyle{HK}}\). This leads to a regularized version of the system~\eqref{eq: spde_introduction_regularized_particle_system}, which reads as
\begin{equation*}
  \Id \hat{X}_t^{i,N} = -\frac{1}{N-1} \sum\limits_{\substack{j=1 \\j \neq i }}^N k_{\scriptscriptstyle{HK}}^N(\hat{X}_t^{i,N} -\hat{X}_t^{j,N}) \Id t + \sigma(t,\hat{X}_t^{i,N} ) \Id B_t^{i}, \quad i=1,\ldots,N , \quad \mathbf{\hat{X}}_0^{N} \sim \overset{N}{\underset{i=1}{\otimes}} \hat{\rho}_0,
\end{equation*}
for $t\geq 0$, where \(\hat{\mathbf{X}}^{N}_t:= (\hat{X}_t^{1,N}, \ldots, \hat{X}_t^{N,N})\). This allows to introduce an intermediate particle system by
\begin{align*}
  \begin{cases}
  \Id \hat{Y}_t^{i,N} = -(k_{\scriptscriptstyle{HK}}^N * \hat{\rho}_t^N)(\hat{Y}_t^{i,N}) \Id t + \sigma(t, \hat{Y}_t^{i,N} ) \Id B_t^{i}, \quad  \hat{Y}_0^{i,N}=\hat{X}_0^{i,N}, \quad i=1,\ldots,N ,\\
  \hat{\rho}_t \; \mathrm{is \, the \,  density \, of} \, Y_t^{i,N} .
  \end{cases}
\end{align*}
We note that, similar to \eqref{eq: introduction_McKean_Vlasov_for_Hk}, the aforementioned equation is in general a non-linear, non-local McKean--Vlasov SDE, which induces a non-linear Fokker--Planck equation similar to the one presented in~\eqref{eq: introduction_Fokker_Planck_for_Hk}. The regularized systems allow to estimate, for all \(i\), terms of the form \(\E(|\hat{X}_t^{i,N}-\hat{Y}_t^{i,N}| )\) and \(\E(|\hat{Y}_t^{i,N}-\hat{Y}_t^{i}|)\) separately via PDE methods by studying the associated non-linear Fokker--Planck equations. As a result, one can obtain propagation of chaos for the system \eqref{eq: spde_introduction_regularized_particle_system} with \(k_{\scriptscriptstyle{HK}}^N\) instead of \(k_{\scriptscriptstyle{HK}}\). Following the above described method, various versions of propagation of chaos have been shown for a variety of models with general kernels \(k\) in \cite{lazarovici2017mean, canizares2017stochastic, chen2021rigorous} for particle systems without environmental noise, which have non-Lipschitz, unbounded and even singular interaction kernels.

\subsection{Hegselmann--Krause model with environmental noise}\label{subsec: HK model}

In this subsection we introduce the Hegselmann--Krause model with environmental noise, its corresponding mean-field stochastic differential equation with its associated stochastic Fokker--Planck equation.

\medskip

Let \((\Omega, \cF, (\cF_t)_{t \ge 0 } , \P)\) be a complete filtered probability space, \(B=(B_t^{i}, t\ge 0, i\in \N)\) be a sequence of one-dimensional Brownian motions and \(W=(W_t, t \ge 0)\) be a one-dimensional Brownian motion. All Brownian motions $(B_t^{i}, t\ge 0,i\in \N)$ and $(W_t, t \ge 0)$ are supposed to be independent and measurable with respect to the filtration \((\cF_t, t \ge 0)\). Let the initial data \((\zeta^{i} ,i\in \N)\)  be i.i.d. random variables with density~\(\rho_0\) and independent of the Brownian motions \((B_t^{i}, t\ge 0,i\in \N)\), and \((W_t, t \ge 0)\). Moreover,  we denote by \(\cF^W=(\cF^W_t, t \ge 0)\) the augmented filtration generated by \(W\) (see \cite[Section~2.7]{KaratzasIoannis2009Bmas} for the definition) and by \(\mathcal{P}^W\) the predictable \(\sigma\)-algebra with respect to \(\cF^W\). Analogous notation will be used for the filtration generated by the Brownian motion \(B\).

The Hegselmann--Krause model with environmental noise is given by the interacting particle system \(\mathbf{X}^N_t= (X_t^{1}, \ldots, X_t^{N}) \) following the dynamics
\begin{equation}\label{eq: particle_system_with_common_noise}
  \d X_t^{i} = -\frac{1}{N-1} \sum\limits_{\substack{j=1 \\j \neq i }}^N k_{\scriptscriptstyle{HK}}(X_t^{i} -X_t^{j}) \Id t + \sigma(t,X_t^{i}) \Id B_t^{i} + \nu \Id W_t , \quad i=1,\ldots,N , \quad X_0^{i} = \zeta_i,
\end{equation}
for $t\in [0,T]$, where \(\sigma\colon  [0,T] \times \R \mapsto \R \) is the diffusion coefficient, \(\nu > 0\) a constant and the interaction force $k_{\scriptscriptstyle{HK}}(x)=\indicator{[0,R]}(|x|) x$ for $x\in \R$. We point out, that the kernel \(k_{\scriptscriptstyle{HK}}\) always stands for the kernel in the Hegselmann--Krause model. On the other hand \(k\) will denote a general kernel (see Section \ref{sec:FP equation} and Section \ref{sec: mean_field_sde}). 
For establishing propagation of chaos for \(k \in L^1(\R^d) \cap L^2(\R^d)\) we need to introduce an approximation sequence. Let $(k^\tau)_{\tau \in \N}\subset \testfunctions{\R}$ satisfy
\begin{enumerate}[label=(\roman*)]
  \item $\|k^\tau - k\|_{L^2(\R)}\to 0$ as $\tau \to \infty$,
  \item \(\mathrm{supp}(k^\tau)\subset \mathcal{K} \) for $\tau\in \N$ and \(\mathrm{supp}(\frac{\dd}{\dd x} k^\tau )\subset\mathcal{K}\) for some compact set $\mathcal{K}\subset \R$,
  \item \(0 \le k^\tau \le C \), \(|\frac{\dd}{\dd x} k^\tau | \le \frac{C}{\tau} \) for some constant \(C>0 \).
\end{enumerate}

Notice that we by approximating the indicator function by smooth functions \((\psi^\tau, \tau > 0)\)
satisfying
\begin{itemize}
  \item  $\lim\limits_{\tau \to 0} \psi^{\tau}  = \indicator{[-R,R]}$ almost everywhere,
  \item \(\mathrm{supp}(\psi^\tau)\subset [-R-2\tau,R+2\tau ], \), \(\mathrm{supp}(\frac{\dd}{\dd x} \psi^\tau )\subset [-R-2\tau,-R+2\tau ] \cup [R-2\tau,R+2\tau ] \),
  \item \(0 \le \psi^\tau \le 1 \), \(|\frac{\dd}{\dd x} \psi^\tau | \le \frac{C}{\tau} \) for some constant \(C>0 \).
\end{itemize}
we can define
\begin{equation*}
  k_{\scriptscriptstyle{HK}}^\tau(x):= x \psi^\tau(x).
\end{equation*}
Then, \((k_{\scriptscriptstyle{HK}}^\tau, \tau >0)\) satisfies the above approximation properties.
We denote the regularized interacting particle system for the HK kernel \(k_{\scriptscriptstyle{HK}}^\tau\) by \(\mathbf{X}^{N,\tau}_t= (X_t^{1,\tau}, \ldots, X_t^{N,\tau}) \) and it is given by
\begin{equation}\label{eq: regularized_particle_system_with_common_noise}
  \d X_t^{i,\tau} = -\frac{1}{N-1} \sum\limits_{\substack{j=1 \\j \neq i }}^N k_{\scriptscriptstyle{HK}}^\tau(X_t^{i,\tau} -X_t^{j,\tau}) \Id t + \sigma(t, X_t^{i,\tau}) \Id B_t^{i} + \nu \Id W_t  , \; \; X_0^{i,\tau} = \zeta_i,
\end{equation}
for $t\in [0,T]$ and for \(i=1,\ldots,N\). Although the interaction force kernel $k_{\scriptscriptstyle{HK}}$ is non-Lipschitz continuous, the \(N\)-particle systems~\eqref{eq: particle_system_with_common_noise} and \eqref{eq: regularized_particle_system_with_common_noise} possess unique strong solutions, see e.g. \cite[Theorem~1.1]{Pamen2019}, as \(k_{\scriptscriptstyle{HK}}^\tau\) and \(k_{\scriptscriptstyle{HK}}\) are bounded and measurable.

Corresponding to the particle systems \eqref{eq: particle_system_with_common_noise} and \eqref{eq: regularized_particle_system_with_common_noise}, for \(i \in \N\), the system of mean-field SDEs is given by
\begin{align}\label{eq: mean_field_trajectories_with_common_noise}
  \begin{cases}
  \Id Y_t^{i} = -  (k_{\scriptscriptstyle{HK}} * \rho_t)(Y_t^{i}) \Id t + \sigma(t,Y_t^{i}) \Id B_t^{i} + \nu \Id W_t, \quad Y_0^{i}=X_0^{i} , \\
  \rho_t \; \mathrm{is \, the \, conditional \, density \, of} \, Y_t^{i} \, \mathrm{given} \, \cF_t^W,
  \end{cases}
\end{align}
and the system of regularized mean-field SDEs is defined by
\begin{align}\label{eq: regularized_mean_field_trajectories_with_common_noise}
  \begin{cases}
  \Id Y_t^{i,\tau} = -  (k_{\scriptscriptstyle{HK}}^\tau * \rho_t^\tau)(Y_t^{i,\tau}) \Id t + \sigma(t, Y_t^{i,\tau} ) \Id B_t^{i} + \nu \Id W_t, \quad Y_0^{i,\tau}=X_0^{i,\tau}, \\
  \rho_t^\tau \; \mathrm{is \, the \, conditional \, density \, of} \, Y_t^{i,\tau} \, \mathrm{given} \, \cF_t^W ,
  \end{cases}
\end{align}
for $t\in [0,T]$, where \(\rho_t\) denotes the conditional density of \( Y_t^{i}\) given \(\cF^W_t\), that is, for every bounded continuous function \(\varphi\), \(\rho_t\) satisfies
\begin{equation*}
  \E(\varphi(Y_t^{i}) \, | \, \cF^W_t) = \int_{\R} \varphi(x) \rho_t(x) \Id x, \quad \mathbb{P} \text{-a.e.}
\end{equation*}
The same holds for the regularized conditional density \(\rho_t^\tau\) of \(Y^{i, \tau}_t\) given \(\cF^W_t\). Let us remark that \(\rho^\tau_t, \rho_t\) have no superscript \(i\) since they are independent of \(i \in \N \). Indeed, in our case the (regularized) mean-field particles are conditionally independent given \(\cF^W\) and identically distributed, thus, the conditional density is the same for each \(i \in \N\).

Associated to the mean-field SDEs \eqref{eq: mean_field_trajectories_with_common_noise} and \eqref{eq: regularized_mean_field_trajectories_with_common_noise}, the stochastic Fokker--Planck equation reads as
\begin{equation}\label{eq: hk_spde}
  \Id \rho_t = \frac{\dd^2}{\dd x^2} \Big( \frac{\sigma_t^2 + \nu^2 }{2}  \rho_t \Big) \Id t + \frac{\dd}{\dd x} ((k_{\scriptscriptstyle{HK}} *\rho_t)\rho_t) \Id t  - \nu \frac{\dd}{\dd x} \rho_t \Id W_t,
\end{equation}
and the regularized stochastic Fokker--Planck equation as
\begin{equation}\label{eq: regularised_hk_spde}
  \Id \rho_t^\tau = \frac{\dd^2}{\dd x^2} \Big( \frac{\sigma_t^2 + \nu^2 }{2}  \rho_t^\tau  \Big) \Id t + \frac{\dd}{\dd x} ((k_{\scriptscriptstyle{HK}}^\tau  *\rho_t^\tau )\rho_t^\tau ) \Id t - \nu \frac{\dd}{\dd x} \rho_t^\tau  \Id W_t,\quad t\in [0,T].
\end{equation}

Let us remark that we purposely use the same unknown functions \(\rho , \rho^\tau\) for the solutions of the stochastic Fokker--Planck equation~\eqref{eq: hk_spde} and \eqref{eq: regularised_hk_spde} and for the conditional density of the mean-field SDEs \eqref{eq: mean_field_trajectories_with_common_noise} and \eqref{eq: regularized_mean_field_trajectories_with_common_noise} since, as we will see in Theorem~\ref{theorem: existence_mean_field_SDE}, they both coincide under enough regularity assumptions on the initial condition \(\rho_0\). Nevertheless, the meaning of \(\rho , \rho^\tau\) will always be clear from context. We make the following assumptions on the diffusion coefficient~\(\sigma\).

\begin{assumption}\label{ass: sigma_ellipticity+bound}
  Let \(T>0\) and \(\sigma\colon [0,T] \times \R \to \R\) the diffusion coefficient, which satisfies:
  \begin{enumerate}[label=(\roman*)]
    \item There exists a constant \(\lambda > 0\) such that
    \begin{equation*}
      \sigma^2(t,x) \ge \lambda
    \end{equation*}
    for all \(x \in \R\) and \(t \in[ 0,T]\).
    \item There exists a constant \(\Lambda >0\) such that for all \(t \in [0,T]\) we have
    \begin{equation*}
      \sigma(t,\cdot) \in C^3(\R) \quad \mathrm{and} \quad \sup\limits_{t \in [0,T]} \sum\limits_{i=1}^3 \norm{\frac{\dd^{i}}{\dd x^{i}} \sigma(t, \cdot)}_{L^\infty(\R)} \le \Lambda .
    \end{equation*}
  \end{enumerate}
\end{assumption}

The well-posedness of the stochastic Fokker--Planck equation~\eqref{eq: hk_spde} and \eqref{eq: regularised_hk_spde} is presented in Section~\ref{sec:FP equation} and the well-posedness of the mean-field SDEs \eqref{eq: mean_field_trajectories_with_common_noise} and \eqref{eq: regularized_mean_field_trajectories_with_common_noise} in Section~\ref{sec: mean_field_sde}. For this purpose, we first need to fix some basic definitions and function spaces in the next subsection.

\subsection{Function spaces and basic definitions}

In this subsection we collect some basic definitions and introduce the required function spaces. For \( 1 \le p \le \infty\) we denote by \(L^p(\R^d)\) with norm \(\norm{\cdot}_{L^p(\R^d)}\) the vector space of measurable functions whose \(p\)-th power is Lebesgue integrable (with the standard modification for \(p = \infty\)), by \(\testfunctions{\R^d}\) the set of all infinitely differentiable functions with compact support on \(\R^d\) and by \(\mathcal{S}(\R^d)\) the set of all Schwartz functions, see \cite[Chapter~6]{YoshidaKosaku1995FA} for more details. We note that \(\testfunctions{\R^d}\) and \(\mathcal{S}(\R^d)\) are endowed with their standard topologies. Let
\begin{equation*}
  \mathcal{A}:= \{ \alpha = ( \alpha_1,\ldots, \alpha_d) \; : \; \alpha_1, \ldots, \alpha_d \in \N_{0} \}
\end{equation*}
be the set of all multi-indices and \(|\alpha| := \alpha_1 +  \ldots+ \alpha_d\). The derivative will be denoted by
\begin{equation*}
  \partial^\alpha: = \frac{\partial^{|\alpha|}}{\partial x_1^{\alpha_1} \partial x_2^{\alpha_2} \cdots \partial x_d^{\alpha_d} }.
\end{equation*}
In one dimension ($d=1$), we also write \(\frac{\d^n}{\d x^n} f \) for the n-th derivative with respect to \(x \in \R\) of a smooth function~\(f\) defined on~\(\R\). We drop the superscripts \(\alpha,n \) in the case \(\alpha = n = 1 \). Moreover, as an inductive limit, we have \(\testfunctions{\R} = \bigcup\limits_{M=1}^\infty \testfunctions{B(0,M)}\), where \(B(0,M)\) is a ball with radius \(M>0\) in \(\R^d\) and \((\testfunctions{B(0,M)}, p_{\alpha,M} ) \) is the complete metrizable space of smooth functions with compact support in \(B(0,M)\) and semi-norm
\begin{equation*}
  p_{\alpha,M}(f) : =  \sup\limits_{|x| \le M} | (\partial^\alpha f)(x)|
\end{equation*}
for \(f \in \testfunctions{B(0,M)}\) and \(\alpha \in  \mathcal{A}\). We note that this characterization and Baire category theorem immediately imply that \(\testfunctions{\R}\) is not metrizable. Similar, for each \(\alpha, \beta \in \mathcal{A} \) we define the semi-norms
\begin{align*}
  p_{\alpha, \beta} (f):= \sup\limits_{x \in \R^d} | x^\alpha( \partial^\beta f)(x)|.
\end{align*}
Equipped with these seminorms, \(\mathcal{S}(\R^d)\) is  a Fr{\'e}chet space \cite[Appendix~A.5]{AbelsHelmut2012}. Furthermore, we introduce the space of Schwartz distributions \(\mathcal{S}'(\R^d)\). We denote dual parings by \(\qv{\cdot, \cdot}\). For instance, for \(f \in \mathcal{S}', \; u \in \mathcal{S}\) we have \(\qv{u,f}  = u[f]\) and for a probability measure~\(\mu\) we have \(\qv{f,\mu}  = \int f \Id \mu\). The correct interpretation will be clear from the context but should not be confused with scalar product \(\qv{\cdot, \cdot}_{L^2(\R)} \) in \(L^2\).

The Fourier transform \(\mathcal{F}[u]\) and the inverse Fourier transform \(\mathcal{F}^{-1}[u]\) for $u \in \mathcal{S}'(\R^d)$ and $ f \in \mathcal{S}(\R^d)$ are defined by
\begin{equation*}
  \qv{\mathcal{F}[u],f} := \qv{u, \mathcal{F}[f]},
\end{equation*}
where \(\mathcal{F}[f]\) and \(\mathcal{F}^{-1}[f]\) is given by
\begin{equation*}
  \mathcal{F}[f](\xi) := \frac{1}{(2\pi)^{d/2}} \int e^{-i \eta \cdot x } f(x) \Id x
  \quad \text{and}\quad
  \mathcal{F}^{-1}[f](\xi):= \frac{1}{(2\pi)^{d/2}} \int e^{i \eta \cdot x } f(x) \Id x .
\end{equation*}

The Bessel potential for each \(s \in \R\) is denoted by \(J^s := (1-\Delta)^{s/2}u := \mathcal{F}^{-1}[(1+|\xi|^2)^{s/2} \mathcal{F}[u]] \) for \(u \in \mathcal{S}'(\R^d)\). We define the Bessel potential space \(\mathnormal{H}_p^s \) for \(p \in (1,\infty)\) and \(s \in \R\) by
\begin{equation*}
  \mathnormal{H}_p^s:= \{ u \in \mathcal{S}'(\R^d) \; : \;  (1-\Delta)^{s/2} u \in L^p(\R^d) \}
\end{equation*}
with the norm
\begin{equation*}
  \norm{u}_{\mathnormal{H}^{s}_p}:= \norm{(1-\Delta)^{s/2}u}_{L^p(\R^d)}, \quad u \in  \mathnormal{H}_p^s.
\end{equation*}
For \(1 <p < \infty , \; m \in \N\) we can characterize the above Bessel potential spaces \(\mathnormal{H}_p^m\) as Sobolev spaces
\begin{align*}
  W^{m,p} (\R^d) : = \bigg \{ f \in L^p(\R^d) \; : \; \norm{f}_{W^{m,p}(\R^d)}:= \sum\limits_{ \alpha \in \mathcal{A} ,\  |\alpha|\le m } \norm{\partial^\alpha f }_{L^p(\R^d)}  < \infty \bigg \},
\end{align*}
where \(\partial^\alpha f \) is to be understood as weak derivatives \cite{AdamsRobertA2003Ss}. We refer to \cite[Theorem~2.5.6]{TriebelHans1983Tofs} for the proof of the above characterization. As a result, we use Sobolev spaces, which in our context are easier to handle, instead of Bessel potential spaces, whenever possible.

Finally, we introduce general \(L^p\)-spaces, which will serve as the solution space for the SPDEs~\eqref{eq: hk_spde} and \eqref{eq: regularised_hk_spde}. For a Banach space \((Z, \norm{\cdot}_{Z})\), some filtration \((\cF_t)_{t\geq0}\), \(1 \le p \le \infty\) and \(0 \le s <t \le T\) we denote by \(S^p_{\cF}([s,t];Z )\) the set of \(Z\)-valued \((\mathcal{F}_t)\)-adapted continuous processes \((X_u, u \in [s,t])\) such that
\begin{align*}
  \norm{X}_{S^p_{\cF}([s,t];Z )}:=
  \begin{cases}
  \bigg( \E \bigg( \sup\limits_{u \in [s,t]} \norm{X_u}_Z^p\bigg) \bigg)^{\frac{1}{p}}, \quad & p \in [1, \infty)  \\
  \sup\limits_{ \omega \in \Omega } \sup\limits_{u \in [s,t]} \norm{X_u}_Z , \quad & p = \infty
  \end{cases}
\end{align*}
is finite. Similar, \(L^p_{\cF}([s,t];Z)\) denotes the set of \(Z\)-valued predictable processes \((X_u, u \in [s,t])\) such that
\begin{align*}
  \norm{X}_{L^p_{\cF}([s,t];Z )}:=
  \begin{cases}
  \bigg( \E \bigg( \int\limits_{s}^t  \norm{X_u}_Z^p \Id u \bigg) \bigg)^{\frac{1}{p}}, \quad & p \in [1, \infty)  \\
  \sup\limits_{ (\omega,u)  \in \Omega \times [s,t]} \norm{X_u}_Z , \quad & p = \infty
  \end{cases}
\end{align*}
is finite. In most cases \(Z\) will be the Bessel potential space \(\mathnormal{H}_p^n\), as is mainly used by Krylov~\cite{krylov2010ito} in treating SPDEs. For a more detail introduction to the above function spaces we refer to \cite[Section~3]{Krylov1999AnAA}.

\section{Well-posedness of the stochastic Fokker--Planck equations}\label{sec:FP equation}

This section is dedicated to establishing the global existence and uniqueness of weak solutions of the stochastic Fokker--Planck equations~\eqref{eq: hk_spde} and~\eqref{eq: regularised_hk_spde} under suitable conditions on the initial condition and coefficients. Instead of treating the special case \(k_{\scriptscriptstyle{HK}}\), we will take a more general approach and prove existence and uniqueness for general interaction forces \(k \colon \R \to \R\) under some integrability conditions.

Before we start our analysis, we introduce the concept of weak solutions.

\begin{definition}\label{def: solution_of_hk_spde}
  For a general interaction force $k \in L^2(\R)$, a non-negative stochastic process \((\rho_t , t \ge 0)\) is called a (weak) solution of the SPDE
  \begin{equation}\label{eq: hk_spde general}
    \Id \rho_t = \frac{\dd^2}{\dd x^2} \Big( \frac{\sigma_t^2 + \nu^2 }{2}  \rho_t \Big) \Id t + \frac{\dd}{\dd x} ((k *\rho_t)\rho_t) \Id t  - \nu \frac{\dd}{\dd x} \rho_t \Id W_t,\quad t\in [0,T],
  \end{equation}
  if
  \begin{equation*}
    (\rho_t , t \in [0,T])\in  L^2_{\cF^W}([0,T];W^{1,2}(\R) ) \cap S^\infty_{\cF^W}([0,T];L^1(\R) \cap L^2(\R))
  \end{equation*}
  and, for any \(\varphi \in \testfunctions{\R}\), \(\rho\) satisfies almost surely the equation, for all \(t \in [0,T]\),
  \begin{align}\label{eq: solution_hk_spde}
    \begin{split}
    \qv{\rho_t, \varphi}_{L^2(\R)}
    =& \;  \qv{\rho_0, \varphi}_{L^2(\R)}
    +  \int\limits_0^t \bigg\langle \frac{\sigma_s^2 + \nu^2 }{2} \rho_s, \frac{\dd^2}{\dd x^2}\varphi \bigg\rangle_{L^2(\R)} \Id s
    - \int\limits_0^t   \bigg \langle (k *\rho_s)\rho_s, \frac{\dd}{\dd x} \varphi \bigg \rangle_{L^2(\R)}  \Id s  \\
    & +  \int\limits_0^t \nu  \bigg \langle \rho_s, \frac{\dd}{\dd x} \varphi \bigg\rangle_{L^2(\R)}  \Id W_s
    \end{split}
  \end{align}
\end{definition}

\begin{remark}
  A solution to the stochastic partial differential equation~\eqref{eq: hk_spde} and~\eqref{eq: regularised_hk_spde} is defined analogously by replacing \(k\) with \(k_{\scriptscriptstyle{HK}}\) or \(k_{\scriptscriptstyle{HK}}^\tau\), respectively.
\end{remark}

\begin{remark}
  There are multiple solution concepts for SPDEs, see for example \cite{LiuWei2015SPDE} for strong solutions in general separable Hilbert spaces or \cite{DaPratoGiuseppe2014Seii} for mild solutions with respect to a infinitesimal generator. In the present work, we use the concept presented in \cite{Krylov1999AnAA}. This has the advantage that we can use It{\^o}'s formula for \(L^p\)-norms \cite{krylov2010ito} as well as the linear SPDE theory in \cite{Krylov1999AnAA}.
\end{remark}

\begin{remark}
  Under the assumption that for all \(t \in [0,T]\), \(\sigma_t \in C^2(\R)\) we can rewrite formally equation~\eqref{eq: solution_hk_spde} such that the leading coefficient is in non-divergence form, i.e.
  \begin{align*}
    &\qv{\rho_t, \varphi}_{L^2(\R)}\\
    % &\quad = \;  \qv{\rho_0, \varphi}_{L^2(\R)}  +  \int\limits_0^t \qv{ \frac{\sigma_s^2 + \nu^2 }{2} \rho_s, \varphi_{xx}}_{L^2(\R)} \Id s - \int\limits_0^t \qv{(k *\rho_s)\rho_s, \varphi_x}_{L^2(\R)} \Id s  \\
    %&\quad\quad+  \int\limits_0^t  \nu \qv{ \rho_s, \varphi_x}_{L^2(\R)} \Id W_s \\
    &\quad= \;  \qv{\rho_0, \varphi}_{L^2(\R)}
    + \frac{1}{2}  \int\limits_0^t \bigg \langle (\sigma_s^2 + \nu^2 ) \frac{\dd^2}{\dd x^2} \rho_s, \varphi \bigg \rangle_{L^2(\R)}
    + 2  \bigg\langle  \frac{\dd}{\dd x}(\sigma_s^2 + \nu^2 ) \frac{\dd}{\dd x} \rho_s, \varphi \bigg\rangle_{L^2(\R)} \\
    & \quad\quad+  \bigg\langle \frac{\dd^2}{\dd x^2}(\sigma_s^2 + \nu^2 ) \rho_s, \varphi  \bigg\rangle_{L^2(\R)} \Id s
    -\int\limits_0^t \bigg \langle (k *\rho_s)\rho_s, \frac{\dd}{\dd x} \varphi\bigg \rangle_{L^2(\R)} \Id s \\
    & \quad\quad -  \int\limits_0^t \nu  \bigg\langle \frac{\dd}{\dd x} \rho_s , \varphi \bigg\rangle _{L^2(\R)} \Id W_s .
  \end{align*}
  Hence, \((\rho_t, t \ge 0)\) solves the following SPDE
  \begin{align*}%\label{eq: hk_spde_non_divergence}
    \Id \rho_t =&\;  \frac{\sigma_t^2    + \nu^2 }{2} \frac{\dd^2}{\dd x^2} \rho_t  \Id t +  \frac{\dd}{\dd x}    (\sigma_t^2 + \nu^2)   \frac{\dd}{\dd x} \rho_t  \Id t \\
    &+ \frac{1}{2} \frac{\dd^2}{\dd x^2}(\sigma_t^2 + \nu^2 )  \rho_t  \Id t
    +  \frac{\dd}{\dd x} ((k *\rho_t)\rho_t) \Id t - \nu \frac{\dd}{\dd x}  \rho_t \Id W_t , \quad t\in [0,T].
  \end{align*}
\end{remark}

In the next theorem we establish uniqueness and local existence of weak solutions to the non-local stochastic Fokker--Planck equation~\eqref{eq: hk_spde general}. Furthermore, we are going to see in Corollary~\ref{cor: spde_global_weak_solution} that the existence will not depend on the \(L^2\)-norm of the initial condition~\(\rho_0\), allowing us to extend the local solution obtained in Theorem~\ref{theorem: existence_of_hk_spde} to a global solution on an arbitrary interval~\([0,T ]\).

\begin{theorem}\label{theorem: existence_of_hk_spde}
  Let Assumption~\ref{ass: sigma_ellipticity+bound} hold. Further, assume \(0 \le \rho_0 \in L^1(\R) \cap L^2(\R) \) with \(\norm{\rho_0}_{L^1(\R)}=1\) and $k \in L^2(\R)$. Then, there exists a \(T^*>0\) and a unique non-negative solution of the SPDE~\eqref{eq: hk_spde general} in the space
  \begin{equation*}
    \mathbb{B} : =  L^2_{\cF^W}([0,T^*];W^{1,2}(\R) ) \cap S^\infty_{\cF^W}([0,T^*];L^1(\R) \cap L^2(\R)). 
  \end{equation*}
  Moreover, the solution \(\rho\) has the property of mass conservation 
  \begin{equation*}
    \norm{\rho_t}_{L^1(\R)} = 1 , \quad \P\text{-a.s.},
  \end{equation*}
  for all \(t \in [0,T]\). 
\end{theorem}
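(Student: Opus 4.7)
The plan is a Banach fixed-point argument on the linearization, with non-negativity and mass conservation established a posteriori. For a fixed $u$ in a suitable ball of $\mathbb{B}$, I freeze the non-locality and study the linear SPDE
\begin{equation*}
  \d \rho_t = \frac{\dd^2}{\dd x^2}\Big(\tfrac{\sigma_t^2+\nu^2}{2}\rho_t\Big)\dd t + \frac{\dd}{\dd x}\big((k*u_t)\rho_t\big)\dd t - \nu\,\frac{\dd}{\dd x}\rho_t\,\dd W_t,\qquad \rho_0 = \rho_0.
\end{equation*}
Since $\|k*u_t\|_{L^\infty}\le \|k\|_{L^2}\|u_t\|_{L^2}$ is uniformly bounded on $\Omega\times[0,T^*]$, and $(\sigma^2+\nu^2)/2$ is smooth and uniformly elliptic under Assumption~\ref{ass: sigma_ellipticity+bound}, Krylov's $L^p$-theory for linear SPDEs~\cite{Krylov1999AnAA} yields a unique $\rho\in\mathbb{B}$, thus defining a solution map $\Phi\colon u\mapsto\rho$. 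Applying Krylov's It{\^o} formula to $\|\rho_t\|_{L^2}^2$, the antisymmetry $\int \rho\,\frac{\dd}{\dd x}\rho\,\dd x=0$ makes the martingale term vanish to first order, and the It{\^o} correction of the transport noise contributes $+\nu^2\|\frac{\dd}{\dd x}\rho_t\|_{L^2}^2$, which combines with the divergence-form diffusion to leave the net coercive term $-\int\sigma_s^2(\frac{\dd}{\dd x}\rho_s)^2\dd x\le-\lambda\|\frac{\dd}{\dd x}\rho_s\|_{L^2}^2$ by Assumption~\ref{ass: sigma_ellipticity+bound}(i). The nonlinear drift produces a term $-2\langle (k*u_s)\rho_s,\frac{\dd}{\dd x}\rho_s\rangle_{L^2}$, absorbed via Young's inequality into $\epsilon\|\frac{\dd}{\dd x}\rho_s\|_{L^2}^2 + C_\epsilon\|k\|_{L^2}^2\|u_s\|_{L^2}^2\|\rho_s\|_{L^2}^2$ with $\epsilon<\lambda$. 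Gronwall's lemma then controls $\|\rho\|_{\mathbb{B}}$ uniformly over $u$ in a ball $B_R\subset\mathbb{B}$, so that $\Phi(B_R)\subset B_R$ for appropriate $R$ and sufficiently small $T^*$.

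\textbf{Contraction and fixed point.} For $u,v\in B_R$ write $\rho=\Phi(u)$, $\tilde\rho=\Phi(v)$, and $\eta=\rho-\tilde\rho$. Splitting $(k*u_s)\rho_s-(k*v_s)\tilde\rho_s = (k*u_s)\eta_s + (k*(u_s-v_s))\tilde\rho_s$ and applying the same It{\^o}--Young scheme to $\|\eta_t\|_{L^2}^2$ yields an estimate of the form $\|\eta\|_{\mathbb{B}}^2\le C(T^*)\|u-v\|_{\mathbb{B}}^2$ with $C(T^*)\to 0$ as $T^*\downarrow 0$, after shrinking $T^*$ if necessary. Banach's fixed-point theorem then delivers a unique $\rho\in\mathbb{B}$ solving \eqref{eq: hk_spde general} in the sense of \eqref{eq: solution_hk_spde}.

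\textbf{Non-negativity, mass conservation, main obstacle.} For non-negativity, the linearized SPDE driving $\Phi$ has smooth bounded coefficients and admits a stochastic characteristics / Feynman--Kac representation, so its solution is the $\cF^W$-conditional pushforward of the (mollified) initial measure under a diffeomorphism and therefore non-negative; passing to the limit as the mollification vanishes, using the continuity of $\Phi$ in $\mathbb{B}$, transfers non-negativity to the fixed point. Equivalently, one can apply Krylov's It{\^o} formula to $\|\rho^-_t\|_{L^2}^2$ along a smooth regularization of $x\mapsto x^-$ and conclude $\rho^-\equiv 0$ from the coercivity estimate. For mass conservation, I test \eqref{eq: solution_hk_spde} against a smooth cut-off $\varphi_R\in\testfunctions{\R}$ with $\varphi_R\equiv 1$ on $[-R,R]$ and $|\partial^j\varphi_R|\le C R^{-j}$; all terms involving derivatives of $\varphi_R$ vanish as $R\to\infty$ by dominated convergence, using $\rho\in L^1\cap L^2$ together with $\|(k*\rho_s)\rho_s\|_{L^1}\le\|k\|_{L^2}\|\rho_s\|_{L^2}^2$ and It{\^o}'s isometry for the stochastic term, yielding $\langle\rho_t,1\rangle=\langle\rho_0,1\rangle=1$ and hence $\|\rho_t\|_{L^1}=1$ thanks to $\rho_t\ge 0$. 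The main obstacle is precisely the nonlinearity $(k*\rho)\rho$: standard SPDE theory treats only linear equations, so the whole argument hinges on reducing this term to a bounded first-order coefficient via the $L^2$-bound on $k$. Equally delicate is the coercivity balance---it is exactly the It{\^o} correction $+\nu^2\|\frac{\dd}{\dd x}\rho\|^2$ of the transport noise that combines with the extra $\nu^2/2$ in the diffusion to leave a net coercive bound proportional to $\sigma^2$; without this precise cancellation, the $L^2$-energy method would not close, and proving non-negativity without a classical comparison principle for the SPDE is the subtlest ingredient.
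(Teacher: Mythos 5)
Your proposal follows essentially the same route as the paper: freeze the convolution term, solve the linearized SPDE by Krylov's $L^2$-theory, exploit that the additive transport noise makes the stochastic integral in the It{\^o} expansion of $\norm{\rho_t}_{L^2(\R)}^2$ vanish pathwise while its It{\^o} correction cancels the $\nu^2$ part of the diffusion, then Young--Gronwall, ball invariance and contraction for small $T^*$, with mass conservation via cut-off test functions. The only real divergence is the non-negativity step, where the paper mollifies the kernel $k$ (not the initial datum) and applies Krylov's maximum principle to the linearized equation before passing to the limit, whereas you sketch a stochastic-characteristics/negative-part argument; this is a legitimate alternative but would need the same kernel mollification to make the drift coefficient regular enough.
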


\begin{proof}
  Let us define the metric space
  \begin{equation*}
    F^{T,M}:=\big \{X \in S^\infty_{\cF^W}([0,T];L^2(\R) )\; : \; \norm{X}_{S^\infty_{\cF^W}([0,T];L^2(\R) )}  \le M \big\}
  \end{equation*}
  for some constant \(M> \norm{\rho_{0}}_{L^2(\R)} \), for instance \(M= 2 \norm{\rho_{0}}_{L^2(\R)}\). The metric on~$F^{T,M}$ is induced by the norm on  $S^\infty_{\cF^W}([0,T];L^2(\R) )$. The solution map \(\mathcal{T} \colon F^{T,M} \to F^{T,M}\) is defined as follows. For each \(\zeta \in F^{T,M}\) we define \(\mathcal{T}(\zeta)\) as the solution of the following linear SPDE
  \begin{align}\label{eq; linear_hk_spde}
    \begin{split}
    \Id \rho_t =&\;  \frac{\sigma_t^2 + \nu^2 }{2} \frac{\dd^2}{\dd x^2} \rho_t   \Id t +   \frac{\dd}{\dd x}  (\sigma_t^2 + \nu^2)   \frac{\dd}{\dd x} \rho_t  \Id t \\
    &+ \frac{1}{2} \frac{\dd^2}{\dd x^2}(\sigma_t^2 + \nu^2 )  \rho_t  \Id t
    +  \frac{\dd}{\dd x} ((k *\zeta_t)\rho_t) \Id t - \nu \frac{\dd}{\dd x}  \rho_t \Id W_t, \quad t\in [0,T].
    \end{split}
  \end{align}
  The \(L^2\)-bound on \(k\) and H{\"o}lder's inequality imply
  \begin{equation}\label{eq: l_infinity_bound_k_zeta}
    |k *\zeta_t(x)| \le \norm{k}_{L^2(\R)} \norm{\zeta_t}_{L^2(\R)} \le  \norm{k}_{L^2(\R)} M
  \end{equation}
  for all \(x \in \R\), which allows us to check the conditions of the \(L^p\)-theory of SPDEs \cite[Theorem~5.1 and Theorem~7.1]{Krylov1999AnAA} for the case \(n=-1\) therein. For instance, if we define  for \(q \in W^{1,2}(\R)\) the function
  \begin{equation*}
    f(q,t,x) = \frac{\dd}{\dd x} ((k*\zeta_t) q_t ),
  \end{equation*}
  then obviously \(f(0,\cdot,\cdot) \in L^2_{\cF^W}([0,T]; H^{-1,2}(\R))\) and, since \(x {/} (1+|x|^2)^{1/2} \) is bounded (see \cite[Theorem~2.3.8]{Triebel1978} for the lifting property), we have
  \begin{equation*}
    \norm{f(q,t,\cdot)}_{H^{-1,2} (\R)} \le C \norm{(k*\zeta_t) q_t}_{L^2(\R)}
    \le \norm{k}_{L^2(\R)}  M  \norm{q_t}_{L^2(\R)} .
  \end{equation*}
  By \cite[Remark~5.5]{Krylov1999AnAA} this is sufficient to verify \cite[Assumption~5.6]{Krylov1999AnAA}. The other assumptions are proven similarly.

  Hence, we can deduce that the linear SPDE~\eqref{eq; linear_hk_spde} admits a unique solution
  \begin{equation*}
    \rho^\zeta \in L^2_{\cF^W}([0,T];W^{1,2}(\R) ) \cap S^2_{\cF^W}([0,T];L^2(\R) ).
  \end{equation*}
  In the next step we want to demonstrate the non-negativity of the solution \(\rho^\zeta\) with the regularity of the solution \(\rho^\zeta\). Let us denote by \(k_m\) the mollification of \(k\) and let
  \begin{equation*}
    \rho^{\zeta, m} \in L^2_{\cF^W}([0,T];W^{1,2}(\R) ) \cap S^2_{\cF^W}([0,T];L^2(\R) )
  \end{equation*}
  be the solution of the SPDE~\eqref{eq; linear_hk_spde} with \((k_m*\zeta) \rho^\zeta\) instead of \((k*\zeta) \rho^\zeta\). Then we can write the SPDE in the form
  \begin{align*}
    \Id  \rho^{\zeta, m}_t =&\; a(t,x) \frac{\dd^2}{\dd x^2} \rho^{\zeta, m}_t \Id t  + b^m(t,x)  \frac{\dd}{\dd x}  \rho^{\zeta, m}_t \Id t
    + c^m(t,x) \rho^{\zeta, m}_t  \Id t
    -  \nu \frac{\dd}{\dd x}  \rho^{\zeta, m}_t  \Id W_t ,
  \end{align*}
  for $t \in [0,T]$, with
  \begin{equation*}
    a(t,x):= \frac{\sigma_t^2 + \nu^2 }{2},\,\,
    b^m(t,x):= \frac{\dd}{\dd x}  (\sigma_t^2 + \nu^2)  +   k_m *\zeta_t, \,\,
    c^m(t,x):=\frac{1}{2} \frac{\dd^2}{\dd x^2}(\sigma_t^2 + \nu^2 )  + \frac{\dd}{\dd x} k_m *\zeta_t.
  \end{equation*}
  Now, by Assumption~\ref{ass: sigma_ellipticity+bound} the coefficients \(a^m,b^m,c^m\) and the coefficient in the stochastic part is bounded. Hence, by the maximum principle \cite[Theorem~5.12]{Krylov1999AnAA} the solution \(\rho^{\zeta, m}\) is non-negative. On the other hand, we have
  \begin{align*}
    &\norm{\frac{\dd }{\dd x} \big( (k_m*\zeta_t) \rho^\zeta_t -  (k*\zeta_t)  \rho^\zeta_t \big)}_{ L^2_{\cF^W}([0,T]; H^{-1,2}(\R))}^2\\
    &\quad \le C \norm{((k_m-k)*\zeta_t)  \rho^\zeta_t}_{ L^2_{\cF^W}([0,T]; L^2(\R))}^2 \\
    &\quad \le C \E\bigg( \int\limits_0^T     \norm{((k_m-k)*\zeta_t)}_{L^\infty(\R)}^2    \norm{\rho^\zeta_t}_{L^2(\R)}^2  \bigg)   \\
    &\quad \le C \E\bigg( \int\limits_0^T     \norm{((k_m-k)}_{L^2(\R)}^2 \norm{\zeta_t}_{L^2(\R)}^2 \norm{\rho^\zeta_t}_{L^2(\R)}^2 \bigg)  \\
    &\quad \le C \norm{((k_m-k)}_{L^2(\R)}^2  M^2    \norm{\rho^\zeta_t}_{L^2_{\cF^W}([0,T];L^2(\R))}^2 \xrightarrow[]{m \to \infty} 0.
  \end{align*}
  Consequently, by \cite[Theorem~5.7]{Krylov1999AnAA} we have
  \begin{equation*}
    \lim\limits_{m \to \infty} \norm{\rho^{\zeta,m} - \rho^\zeta }_{ L^2_{\cF^W}([0,T]; W^{1,2}(\R))}  = 0
  \end{equation*}
  and therefore \(\rho^\zeta_t(\cdot)\ge 0 \) for all \(t \in [0,T]\) almost surely (by intersecting all sets of measure one, where \(\rho^{\zeta,m}\) is non-negative).
  
  The non-negativity of the solution \(\rho^\zeta\) and the divergence structure of the equation provides us with the normalization condition/mass conservation, that is
  \begin{equation*}
    \norm{\rho_t^\zeta}_{L^1(\R)} = \norm{\rho_0}_{L^1(\R)} = 1, \quad \P\text{-a.s.},
  \end{equation*}
  for \(t \in [0,T]\). This follows immediately by plugging in a cut-off sequence \((\xi_n, n \in \N)\) for our test function \(\varphi\) and taking the limit \(n \to \infty\) (see \cite[p.~212]{BrezisHaim2011FaSs} for properties of the cut-off sequence). Therefore, the map \(\mathcal{T}(\zeta) = \rho^\zeta\) will be well-defined if we can obtain a bound on the \(S^\infty_{\cF^W}([0,T];L^2(\R) ) \)-norm. For readability we will from now on drop the superscript \(\zeta\) in the following. Applying It{\^o}'s formula~\cite{krylov2010ito}, we obtain
  \begin{align*}
    & \norm{\rho_t}_{L^2(\R)}^2 - \norm{\rho_0}_{L^2(\R)}^2 \\
    &\quad= 2 \nu  \int\limits_0^t  \bigg \langle \rho_s ,  \frac{\dd}{\dd x} \rho_s \bigg \rangle_{L^2(\R)} \Id W_s + \int\limits_0^t  \bigg \langle \rho_s, \frac{\dd}{\dd x}( \sigma_s^2+\nu^2) \frac{\dd}{\dd x} \rho_s + \rho_s \frac{\dd^2}{\dd x^2} ( \sigma_s^2+\nu^2) \bigg \rangle_{L^2(\R)} \Id s \\
    &\quad\quad-  \int\limits_0^t  \bigg\langle (\sigma_s^2+\nu^2) \frac{\dd}{\dd x} \rho_s ,  \frac{\dd}{\dd x}\rho_s \bigg \rangle_{L^2(\R)}   \Id s
    -  2 \int\limits_0^t  \bigg \langle (k *\zeta_s)\rho_s , \frac{\dd}{\dd x}\rho_s \bigg \rangle_{L^2(\R)}   \Id s\\
    &\quad\quad+  \nu^2  \int\limits_0^t  \norm{ \frac{\dd}{\dd x} \rho_s}_{L^2(\R)}^2   \Id s   \\
    &\quad= \int\limits_0^t  \bigg \langle \rho_s,\frac{\dd}{\dd x} ( \sigma_s^2+\nu^2) \frac{\dd}{\dd x} \rho_s + \rho_s \frac{\dd^2}{\dd x^2} ( \sigma_s^2+\nu^2) \bigg \rangle_{L^2(\R)} \Id s -  2 \int\limits_0^t  \bigg \langle (k *\zeta_s)\rho_s ,  \frac{\dd}{\dd x} \rho_s \bigg\rangle_{L^2(\R)}   \Id s \\
    &\quad\quad-  \int\limits_0^t  \bigg \langle \sigma^2_s \frac{\dd}{\dd x} \rho_s , \frac{\dd}{\dd x} \rho_s \bigg \rangle_{L^2(\R)}   \Id s \\
    &\quad  \le \int\limits_0^t  \bigg \langle \rho_s, \frac{\dd}{\dd x}( \sigma_s^2+\nu^2) \frac{\dd}{\dd x} \rho_s + \rho_s \frac{\dd^2}{\dd x^2} ( \sigma_s^2+\nu^2) \bigg \rangle_{L^2(\R)} \Id s -  2 \int\limits_0^t  \bigg \langle (k *\zeta_s)\rho_s ,  \frac{\dd}{\dd x} \rho_s\bigg \rangle_{L^2(\R)}   \Id s \\
    &\quad\quad- \lambda  \int\limits_0^t  \norm{\frac{\dd}{\dd x} \rho_s }_{L^2(\R)}^2   \Id s,
  \end{align*}
  for \(0 \le t \le T\), where we used the fact that \(\rho_s   \frac{\dd}{\dd x} \rho_s = \frac{1}{2} \frac{\dd}{\dd x} (\rho_s^2) \) to get rid of the stochastic integral. At this step, it is crucial that we only have additive common noise. Otherwise the stochastic integral will not vanish and the above estimate will not achieve the \(L^\infty\)-bound in~\(\omega\). For the first term we can use Assumption~\ref{ass: sigma_ellipticity+bound} and Young's inequality to find
  \begin{align}\label{eq: hk_existence_extra_term_sigma_young}
    &\bigg|\int\limits_0^t  \bigg \langle \rho_s, \frac{\dd}{\dd x}( \sigma_s^2+\nu^2) \frac{\dd}{\dd x} \rho_s + \rho_s \frac{\dd^2}{\dd x^2} ( \sigma_s^2+\nu^2) \bigg\rangle_{L^2(\R)} \Id s \bigg|\\
    & \quad\le   \Lambda \int\limits_0^t  \bigg|\bigg \langle \rho_s, \frac{\dd}{\dd x} \rho_s + \rho_s  \bigg \rangle_{L^2(\R)} \bigg| \Id s \nonumber\\
    & \quad\le   \frac{\lambda}{4} \int\limits_0^t \norm{\frac{\dd}{\dd x} \rho_s}_{L^2(\R)}^2  \Id s + \Big( \frac{ \Lambda ^2}{\lambda} + \Lambda \Big)  \int\limits_0^t \norm{ \rho_s}_{L^2(\R)}^2  \Id s.\nonumber
  \end{align}
  On the other hand, using \eqref{eq: l_infinity_bound_k_zeta} and Young's inequality, we obtain
  \begin{align*}
    \bigg|\bigg \langle (k *\zeta_s)\rho_s ,  \frac{\dd}{\dd x} \rho_s \bigg \rangle_{L^2(\R)}\bigg|
    &\le \norm{k *\zeta_s}_{L^\infty(\R)} \bigg \langle |\rho_s|, \bigg| \frac{\dd}{\dd x} \rho_s \bigg| \bigg \rangle_{L^2(\R)} \\
    &\le \norm{k}_{L^2(\R)} M  \norm{\rho_s}_{L^2(\R)}  \norm{ \frac{\dd}{\dd x} \rho_s}_{L^2(\R)}  \\
    &\le \frac{\norm{k}_{L^2(\R)}^2 M^2 }{\lambda} \norm{\rho_s}_{L^2(\R)}^2 + \frac{\lambda}{4} \norm{ \frac{\dd}{\dd x} \rho_s}_{L^2(\R)}^2.
  \end{align*}
  After absorbing the terms, we find 
  \begin{equation*}
    \norm{\rho_t}_{L^2(\R)}^2 - \norm{\rho_0}_{L^2(\R)}^2
    \le \bigg( \frac{\norm{k}_{L^2(\R)}^2 M^2 + \Lambda^2 }{\lambda} + \Lambda \bigg)\int\limits_0^t  \norm{\rho_s}_{L^2(\R)}^2 \Id  s.
  \end{equation*}
  For the rest of the proof we define the constant 
  \begin{equation*}
    C(\lambda, \Lambda, k,M):=  \frac{\norm{k}_{L^2(\R)}^2 M^2 + \Lambda^2 }{\lambda} + \Lambda
  \end{equation*}
  and conclude
  \begin{equation}\label{eq: l_infinity_l_2_bound_of_rho}
    \norm{\rho_t}_{L^2(\R)}^2 \le  \norm{\rho_0}_{L^2(\R)}^2  \exp \big(    C(\lambda, \Lambda, k,M) T  \big) ,
  \end{equation}
  by Gronwall's inequality. Choosing \(\hat{T}^*< \ln(M/ \norm{\rho_0}_{L^2(\R)}^2)    C(\lambda, \Lambda, k,M)^{-1} \), we have \(\rho \in F^{\hat{T}^*,M} \) and the map
  \begin{equation*}
    \mathcal{T}\colon F^{\hat{T}^*,M}  \to F^{\hat{T}^*,M} ,  \quad \zeta \to \rho^\zeta,
  \end{equation*}
  is well-defined up to time \(\hat{T}^*\).

  The next step is to show that \(\mathcal{T}\) is a contraction in a small time span (\(T\le \hat{T}^*\)) and, therefore, has a fixed point. For \(\zeta, \tilde{\zeta} \in F^{T,M} \) let \(\rho:=\mathcal{T}(\zeta), \tilde{\rho}:=\mathcal{T}(\tilde{\zeta})\) be the associated solutions of the linear SPDE~\eqref{eq; linear_hk_spde}. Then, we have
  \begin{align*}
    \Id (\rho_t -\tilde{\rho}_t) =&\;  \frac{\sigma_t^2 + \nu^2 }{2} \frac{\dd^2}{\dd x^2} (\rho_t -\tilde{\rho}_t) \Id t  +  (\sigma_t^2 + \nu^2)   \frac{\dd}{\dd x} (\rho_t -\tilde{\rho}_t)
    \Id t  + \frac{1}{2} \frac{\dd^2}{\dd x^2}(\sigma_t^2 + \nu^2 )  (\rho_t -\tilde{\rho}_t)  \Id t \\
    &+  \frac{\dd}{\dd x} ((k *\zeta_t)\rho_t) \Id t - \frac{\dd}{\dd x} ((k *\tilde{\zeta_t})\tilde{\rho}_t) \Id t -  \nu \frac{\dd}{\dd x}  (\rho_t -\tilde{\rho}_t) \Id W_t, \quad t\in [0,T].
  \end{align*}
  Applying It{\^o}'s formula \cite{krylov2010ito} and multiple Young's inequality again (see~\eqref{eq: hk_existence_extra_term_sigma_young}), we obtain
  \begin{align*}
    &\norm{\rho_t-\tilde{\rho}_t}_{L^2(\R)}^2  \\
    &\quad= - \int\limits_0^t   \bigg \langle (\sigma_s^2 +\nu^2) \frac{\dd}{\dd x} \rho_s - \frac{\dd}{\dd x} \tilde{\rho}_s,  \frac{\dd}{\dd x} \rho_s -  \frac{\dd}{\dd x} \tilde{\rho}_s \bigg \rangle_{L^2(\R)}   \Id s\\
    &\quad\quad -  2 \int\limits_0^t  \bigg \langle (k *\zeta_s)\rho_s - (k *\tilde{\zeta_s}) \tilde{\rho}_s  ,  \frac{\dd}{\dd x} \rho_s - \frac{\dd}{\dd x} \tilde{\rho}_s \bigg \rangle_{L^2(\R)}   \Id s
    +  \nu^2  \int\limits_0^t  \norm{\frac{\dd}{\dd x} (\rho_s-\tilde{\rho}_s)}_{L^2(\R)}^2   \Id s\\
    &\quad\quad + \int\limits_0^t  \bigg \langle \rho_s-\tilde{\rho}_s, \frac{\dd}{\dd x} ( \sigma_s^2+\nu^2) \bigg( \frac{\dd}{\dd x} \rho_s - \frac{\dd}{\dd x} \tilde{\rho}_s \bigg) + (\rho_s-\tilde{\rho}_s) \frac{\dd^2}{\dd x^2} ( \sigma_s^2+\nu^2) \bigg \rangle_{L^2(\R)} \Id s \\
    &\quad\le - \lambda \int\limits_0^t  \norm{ \frac{\dd}{\dd x} \rho_s - \frac{\dd}{\dd x}\tilde{\rho}_s}_{L^2(\R)}^2   \Id s \\
    &\quad\quad- 2 \int\limits_0^t  \bigg \langle (k *(\zeta_s - \tilde{\zeta}_s)) \rho_s + (k *\tilde{\zeta_s}) (\rho_s-\tilde{\rho}_s)  ,  \frac{\dd}{\dd x} \rho_s - \frac{\dd}{\dd x} \tilde{\rho}_s \bigg \rangle_{L^2(\R)}   \Id s  \\
    &\quad \quad+  \frac{\lambda}{4} \int\limits_0^t \norm{\frac{\dd}{\dd x} \rho_s - \frac{\dd}{\dd x} \tilde{\rho}_s}_{L^2(\R)}^2  \Id s + \Big( \frac{ \Lambda ^2}{\lambda} + \Lambda \Big)  \int\limits_0^t \norm{ \rho_s - \tilde{\rho}_s}_{L^2(\R)}^2  \Id  s \\
    &\quad\le  - \frac{3\lambda}{4}\int\limits_0^t   \norm{\frac{\dd}{\dd x}  \rho_s- \frac{\dd}{\dd x}\tilde{\rho}_s}_{L^2(\R)}^2    \Id s
     + \Big( \frac{ \Lambda ^2}{\lambda} + \Lambda \Big)  \int\limits_0^t \norm{ \rho_s - \tilde{\rho}_s}_{L^2(\R)}^2  \Id  s\\
    &\quad\quad+  \int\limits_0^t  \norm{k}_{L^2(\R)} \norm{\zeta_s-\tilde{\zeta}_s}_{L^2(\R)} \norm{\rho_s}_{L^2(\R)} \norm{ \frac{\dd}{\dd x} \rho_s - \frac{\dd}{\dd x} \tilde{\rho}_s}_{L^2(\R)} \Id s  \\
    & \quad\quad+  \norm{k}_{L^2(\R)}  \int\limits_0^t  \norm{\rho_s-\tilde{\rho}_s}_{L^2(\R)} \norm{\tilde{\zeta}_s}_{L^2(\R)} \norm{ \frac{\dd}{\dd x}\rho_s - \frac{\dd}{\dd x}\tilde{\rho}_s}_{L^2(\R)} \Id s \\
    &\quad\le   - \frac{3\lambda}{4}\int\limits_0^t  \norm{\frac{\dd}{\dd x}  \rho_s - \frac{\dd}{\dd x}  \tilde{\rho}_s}_{L^2(\R)}^2   \Id s
    +  \Big( \frac{ \Lambda ^2}{\lambda} + \Lambda \Big)  \int\limits_0^t \norm{ \rho_s - \tilde{\rho}_s}_{L^2(\R)}^2  \Id  s \\
    & \quad\quad+  \int\limits_0^t \frac{\norm{k}_{L^2(\R)}^2 M^2}{\lambda} \norm{\zeta_s-\tilde{\zeta}_s}_{L^2(\R)}^2 +  \frac{\lambda}{4}\norm{\frac{\dd}{\dd x}  \rho_s -  \frac{\dd}{\dd x}  \tilde{\rho}_s}_{L^2(\R)}^2 \Id s  \\
    &\quad\quad +  \int\limits_0^t  \frac{\norm{k}_{L^2(\R) }^2 M^2}{\lambda} \norm{\rho_s-\tilde{\rho}_s}_{L^2(\R)}^2
    + \frac{\lambda}{4} \norm{\frac{\dd}{\dd x}\rho_s - \frac{\dd}{\dd x} \tilde{\rho}_s}_{L^2(\R)}^2 \Id s \\
    &\quad\le  \int\limits_0^t \frac{\norm{k}_{L^2(\R)}^2 M^2}{\lambda} \norm{\zeta_s-\tilde{\zeta}_s}_{L^2(\R)}^2 \Id s +
    \int\limits_0^t   C(\lambda, \Lambda, k,M) \norm{\rho_s-\tilde{\rho}_s}_{L^2(\R)}^2 \Id s \\
    &\quad\le   T  \frac{\norm{k}_{L^2(\R)}^2 M^2}{\lambda} \norm{\zeta-\tilde{\zeta}}_{S_{\cF^W}^\infty([0,T];L^2(\R))}^2
    + C(\lambda, \Lambda, k,M) \int\limits_0^t   \norm{\rho_s-\tilde{\rho}_s}_{L^2(\R)}^2 \Id s .
  \end{align*}
  Gronwall's inequality provide us with the estimate
  \begin{equation*}
    \|\rho-\tilde{\rho}\|_{S_{\cF^W}^\infty([0,T];L^2(\R))}
    \le \sqrt{  \frac{ T \norm{k}_{L^2(\R)}^2 M^2}{\lambda} } \exp \Bigg( \frac{T}{2}   C(\lambda, \Lambda, k,M)\Bigg)
    \|\zeta-\tilde{\zeta}\|_{S_{\cF^W}^\infty([0,T];L^2(\R))} .
  \end{equation*}
  Now, choosing \(T^{*} \) such that
  \begin{equation*}
    \sqrt{\frac{ T^{*} \norm{k}_{L^2(\R)}^2 M^2}{\lambda} {\lambda} } \exp \Bigg( \frac{T^{*}}{2}   C(\lambda, \Lambda, k,M)\Bigg) < 1
  \end{equation*}
  and \(T^{*} \le \hat{T}^{*}\), we see that the map \(\mathcal{T} \colon F^{T^{*},M} \to F^{T^{*},M} \) is a contraction and consequently we obtain a fixed point \(\rho\), which is a local weak solution up to the time \(T^{*}\) of the SPDE~\eqref{eq: hk_spde general}.
\end{proof}

We notice that in the proof of Theorem~\ref{theorem: existence_of_hk_spde}, we only use H{\"o}lder's inequality, Young's convolution and product inequality. Hence, the statement of Theorem~\ref{theorem: existence_of_hk_spde} holds also for arbitrary dimension. We state this observation in the following corollary.

\begin{corollary}
  Assume \(0 \le \rho_0 \in L^1(\R^d) \cap L^2(\R^d) \) with \(\norm{\rho_0}_{L^1(\R^d)}=1\) and consider a general interaction force \(k \in L^2(\R^d)\). Then, there exists a \(T^*>0\) and a unique non-negative solution of the SPDE~\eqref{eq: hk_spde general} in the space
  \begin{equation*}
    \mathbb{B} : =  L^2_{\cF^W}([0,T^*];W^{1,2}(\R^d) ) \cap S^\infty_{\cF^W}([0,T^*];L^1(\R^d) \cap L^2(\R^d)). 
  \end{equation*}
  The solution should be understood in the sense of Definition~\ref{def: solution_of_hk_spde}, where Definition~\ref{def: solution_of_hk_spde} is modified for arbitrary dimension \(d\) in the obvious way, see also \cite[Definition~3.5]{Krylov1999AnAA}.
\end{corollary}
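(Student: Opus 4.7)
The plan is to follow the argument of Theorem~\ref{theorem: existence_of_hk_spde} step-by-step in the $d$-dimensional setting, noting at each point that only dimension-independent inequalities (Hölder, Young's product and convolution inequalities, Gronwall) are used. Concretely, I would first fix $M > \|\rho_0\|_{L^2(\R^d)}$ and introduce
\begin{equation*}
  F^{T,M} := \bigl\{ X \in S^\infty_{\cF^W}([0,T];L^2(\R^d)) \; : \; \|X\|_{S^\infty_{\cF^W}([0,T];L^2(\R^d))} \le M \bigr\},
\end{equation*}
and for $\zeta \in F^{T,M}$ define $\mathcal{T}(\zeta) := \rho^\zeta$ as the solution of the linear SPDE obtained from \eqref{eq; linear_hk_spde} by replacing $\frac{\d}{\d x}$ and $\frac{\d^2}{\d x^2}$ with $\nabla$ and $\Delta$ (and the scalar $\nu\,\d W_t$ term with its natural $d$-dimensional analogue, as specified in the obvious modification of Definition~\ref{def: solution_of_hk_spde}). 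The crucial a priori bound $|k*\zeta_t| \le \|k\|_{L^2(\R^d)} \|\zeta_t\|_{L^2(\R^d)} \le \|k\|_{L^2(\R^d)} M$ is a direct consequence of Hölder's inequality, so it is insensitive to the dimension, and the same verification of the hypotheses of \cite[Theorem~5.1 and Theorem~7.1]{Krylov1999AnAA} (which are formulated in $\R^d$) applies verbatim.

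Next I would establish non-negativity and mass conservation as before. Mollifying $k$ to $k_m$ produces solutions $\rho^{\zeta,m}$ of a linear SPDE with bounded coefficients (the boundedness of the first- and second-order coefficients only requires the essential bound on $k_m*\zeta$ and on derivatives of $\sigma^2+\nu^2$, both of which carry over), to which Krylov's maximum principle \cite[Theorem~5.12]{Krylov1999AnAA} applies in any dimension; passing to the limit in $L^2_{\cF^W}([0,T];W^{1,2}(\R^d))$ using the same bound $\|(k_m-k)*\zeta_t\|_{L^\infty(\R^d)} \le \|k_m-k\|_{L^2(\R^d)} M$ yields $\rho^\zeta \ge 0$. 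Mass conservation follows from plugging a $d$-dimensional cut-off sequence into \eqref{eq: solution_hk_spde} and using the divergence structure of the drift and stochastic terms.

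The core analytic step is the Itô $L^2$-estimate. Applying the $L^p$-Itô formula of \cite{krylov2010ito} to $\|\rho_t^\zeta\|_{L^2(\R^d)}^2$, the stochastic term becomes $2\nu \int_0^t \langle \rho_s, \nabla \rho_s\rangle_{L^2(\R^d)} \cdot \d W_s$, and each component $\int \rho_s \, \partial_i \rho_s\,\d x = \tfrac{1}{2}\int \partial_i(\rho_s^2)\,\d x$ vanishes (this is the point where additive environmental noise is essential, just as in the one-dimensional case). The remaining terms are controlled exactly as in \eqref{eq: hk_existence_extra_term_sigma_young} and the subsequent display by Young's inequality, Assumption~\ref{ass: sigma_ellipticity+bound}, and the estimate $|k*\zeta_s| \le \|k\|_{L^2(\R^d)} M$, producing the same differential inequality
\begin{equation*}
  \|\rho_t\|_{L^2(\R^d)}^2 \le \|\rho_0\|_{L^2(\R^d)}^2 \exp\bigl( C(\lambda,\Lambda,k,M) t \bigr),
\end{equation*}
with the identical constant $C(\lambda,\Lambda,k,M) = \|k\|_{L^2(\R^d)}^2 M^2 / \lambda + \Lambda^2/\lambda + \Lambda$. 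Choosing $\hat T^*$ so that the right-hand side stays below $M^2$ makes $\mathcal{T}$ self-mapping on $F^{\hat T^*, M}$.

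Finally, the contraction estimate carries over by the same computation applied to $\rho - \tilde\rho$: splitting the difference $(k*\zeta_s)\rho_s - (k*\tilde\zeta_s)\tilde\rho_s = (k*(\zeta_s-\tilde\zeta_s))\rho_s + (k*\tilde\zeta_s)(\rho_s-\tilde\rho_s)$ and applying Hölder and Young exactly as in the 1D proof yields
\begin{equation*}
  \|\rho-\tilde\rho\|_{S^\infty_{\cF^W}([0,T];L^2(\R^d))} \le \sqrt{\tfrac{T\|k\|_{L^2(\R^d)}^2 M^2}{\lambda}} \exp\bigl(\tfrac{T}{2} C(\lambda,\Lambda,k,M)\bigr) \|\zeta-\tilde\zeta\|_{S^\infty_{\cF^W}([0,T];L^2(\R^d))},
\end{equation*}
so for $T^* \le \hat T^*$ small enough $\mathcal{T}$ is a contraction and Banach's fixed point theorem provides the desired local solution $\rho \in \mathbb{B}$. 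I do not anticipate any genuine obstacle: the only place where the 1D structure was used was the pointwise identity $\rho \partial_x \rho = \tfrac{1}{2}\partial_x(\rho^2)$, which holds componentwise in $\R^d$, and verifying that Krylov's multi-dimensional SPDE theory indeed supplies what the 1D argument needed is routine.
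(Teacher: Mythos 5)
Your proposal is correct and is essentially the paper's own argument: the paper proves this corollary simply by observing that the proof of Theorem~\ref{theorem: existence_of_hk_spde} only uses H{\"o}lder's, Young's product and convolution inequalities together with Krylov's SPDE theory, all of which are dimension-independent, so the one-dimensional fixed-point argument carries over verbatim to \(\R^d\). Your step-by-step verification (including the key cancellation \(\int \rho_s\,\partial_i \rho_s\,\d x = \tfrac12\int \partial_i(\rho_s^2)\,\d x\) of the stochastic term, which is what the additive environmental noise buys in any dimension) matches the intended reasoning.
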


\begin{remark}\label{remark: spde_global_solution_weak_existence}
  Following the steps of the proof of Theorem~\ref{theorem: existence_of_hk_spde}, we see that we can obtain not only a local solution but a (global) solution for any \(T>0\) by requiring a small \(L^2\)-norm on the initial condition \(\rho_0\). In particular, we can choose a constant \(M >0\) such that
  \begin{equation*}
    \sqrt{\frac{ T \norm{k}_{L^2(\R)}^2 M^2}{\lambda} {\lambda} } \exp  \Bigg ( \frac{T}{2}   \Bigg( \frac{\norm{k}_{L^2(\R)}^2 M^2 + \Lambda^2 }{\lambda} + \Lambda \Bigg) \Bigg) < 1
  \end{equation*}
  and then the condition
  \begin{equation*}
    \norm{\rho_0}_{L^2(\R)}\le M \exp\Bigg(- T \Bigg( \frac{\norm{k}_{L^2(\R)}^2 M^2 + \Lambda^2 }{\lambda} + \Lambda \Bigg)  \Bigg)
  \end{equation*}
  guarantees a unique non-negative solution of the SPDE~\eqref{eq: hk_spde general} on the interval \([0,T]\).
\end{remark}

Next, we establish another global existence and uniqueness result. We emphasize that in the following result we do not need any further assumptions on \(\rho_0\) besides being in \(L^1(\R) \cap L^2(\R)\). Instead, we impose a lower bound on the diffusion coefficient~\(\sigma\). Hence, we require a sufficiently high randomness in the stochastic Fokker--Planck equation. We also assert the fact that the continuation of the solution \((\rho_t, t \ge 0)\) is a direct consequence of the \(L^2\)-theory of SPDEs.

\begin{corollary}\label{cor: spde_global_weak_solution}
  Let Assumption~\ref{ass: sigma_ellipticity+bound} hold. Further, assume \(0 \le \rho_0 \in L^1(\R) \cap L^2(\R) \) with \(\norm{\rho_0}_{L^1(\R)}=1\) and $k\in L^1(\R) \cap L^2(\R)$. Furthermore, assume that the diffusion coefficient \(\sigma \) has a derivative \(\frac{\dd }{\dd x} \sigma\) with compact support \([-L,L]\) and satisfies
  \begin{equation}\label{eq: L_2_condition}
       2 L^2 \sup\limits_{0 \le t \le T} \norm{\frac{\dd}{\dd x}  (\sigma^2_t)}_{L^\infty(\R)}
    + \bigg(C_{GNS}^4 \norm{k}_{L^1(\R)}^2 + 4 L^4 \sup\limits_{0 \le t \le T} \norm{\frac{\dd }{\dd x} (\sigma^2_t)}_{L^\infty(\R)}^2\bigg)^{1/2} \le \lambda,
  \end{equation}
  where \(\lambda\) is the ellipticity constant in Assumption~\ref{ass: sigma_ellipticity+bound} \(C_{GNS}\) is the constant given by the Gagliardo--Nirenberg--Sobolev interpolation in one dimension \cite[Theorem~12.83]{LeoniGiovanni2017Afci}, i.e. \(C_{GNS} = \big( \frac{4 \pi^2}{9} \big)^{-1/4}\). Then, for each \(T>0\) there exist unique global non-negative solutions of the stochastic Fokker--Planck equations ~\eqref{eq: hk_spde general} in the space \(\mathbb{B} \).
\end{corollary}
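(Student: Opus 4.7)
The plan is to reduce the global question to an a priori bound on $\norm{\rho_t}_{L^2(\R)}$: once such a bound holds on any local existence interval, the $L^2$-theory of linear SPDEs \cite{Krylov1999AnAA, krylov2010ito} — as noted in the paragraph preceding the statement — permits continuation of the local solution furnished by Theorem~\ref{theorem: existence_of_hk_spde} beyond its initial existence time $T^*$. Iterating Theorem~\ref{theorem: existence_of_hk_spde} will then cover any horizon $[0,T]$, with uniqueness inherited on successive sub-intervals.

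To produce the uniform bound I would apply It{\^o}'s formula to $\norm{\rho_t}_{L^2(\R)}^2$ along the lines of the proof of Theorem~\ref{theorem: existence_of_hk_spde}, exploiting the additive structure of the environmental noise (which again makes the resulting stochastic integral vanish because $\qv{\rho_s, \frac{\dd}{\dd x}\rho_s}_{L^2(\R)} = 0$). Integration by parts together with Assumption~\ref{ass: sigma_ellipticity+bound} yields an identity in which the coercive contribution $-\lambda \int_0^t \norm{\frac{\dd}{\dd x}\rho_s}_{L^2(\R)}^2 \dd s$ competes with two error terms: the $\sigma$-variation $I_s^\sigma := \int_\R \frac{\dd}{\dd x}(\sigma_s^2)\, \rho_s\, \frac{\dd}{\dd x}\rho_s \dd x$ and the non-local convection $I_s^k := \int_\R (k* \frac{\dd}{\dd x}\rho_s)\, \rho_s^2 \dd x$.

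These two error terms are then estimated using the hypotheses: the assumption $k\in L^1(\R)$ and Young's convolution inequality give $\norm{k * \frac{\dd}{\dd x}\rho_s}_{L^2(\R)} \le \norm{k}_{L^1(\R)} \norm{\frac{\dd}{\dd x}\rho_s}_{L^2(\R)}$, which combined with the one-dimensional Gagliardo--Nirenberg--Sobolev interpolation $\norm{\rho_s}_{L^4(\R)}^4 \le C_{GNS}^4 \norm{\rho_s}_{L^2(\R)}^3 \norm{\frac{\dd}{\dd x}\rho_s}_{L^2(\R)}$ controls $|I_s^k|$; on the other hand, the compact support of $\frac{\dd}{\dd x}\sigma$ in $[-L,L]$ together with a H{\"o}lder-type bound of the form $\norm{\rho_s \indicator{[-L,L]}}_{L^2(\R)}^2 \le \sqrt{2L}\norm{\rho_s}_{L^4(\R)}^2$ followed by the same Gagliardo--Nirenberg--Sobolev estimate generates the $L$-dependent factors appearing in~\eqref{eq: L_2_condition}. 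A carefully chosen weighted Young inequality then collapses both error contributions into a single bound of the type $\Psi_t \norm{\frac{\dd}{\dd x}\rho_t}_{L^2(\R)}^2$, with $\Psi_t$ precisely the left-hand side of~\eqref{eq: L_2_condition}.

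Under hypothesis~\eqref{eq: L_2_condition} one has $\lambda - \Psi_t \ge 0$, so the energy inequality reduces to
\begin{equation*}
\frac{\dd}{\dd t}\norm{\rho_t}_{L^2(\R)}^2 \le -(\lambda-\Psi_t)\norm{\tfrac{\dd}{\dd x}\rho_t}_{L^2(\R)}^2 \le 0,
\end{equation*}
giving $\norm{\rho_t}_{L^2(\R)} \le \norm{\rho_0}_{L^2(\R)}$ throughout $[0,T^*]$ independently of $T^*$, which closes the continuation argument and yields the asserted global existence and uniqueness in $\mathbb{B}$ on any interval $[0,T]$. The main obstacle will be the precise Young-inequality book-keeping that produces the discriminant-like square root in~\eqref{eq: L_2_condition}: the two error contributions must be balanced against $\lambda\norm{\frac{\dd}{\dd x}\rho_s}_{L^2(\R)}^2$ so that a quadratic inequality in $\norm{\frac{\dd}{\dd x}\rho_s}_{L^2(\R)}$ has non-positive discriminant, which after completing the square translates exactly into the stated condition.
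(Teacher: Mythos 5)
Your overall strategy --- an a priori $L^2$ energy estimate via It{\^o}'s formula (with the stochastic integral vanishing thanks to the additive environmental noise), followed by continuation of the local solution of Theorem~\ref{theorem: existence_of_hk_spde} through the linear $L^2$-theory --- is exactly the paper's, and your identification of the two error terms $I_s^\sigma$ and $I_s^k$ is correct. However, the interpolation inequality you invoke for the convection term is the wrong one, and with it the argument does not close. You quote the $L^2$-based Gagliardo--Nirenberg inequality $\norm{\rho_s}_{L^4(\R)}^4 \le C_{GNS}^4 \norm{\rho_s}_{L^2(\R)}^{3} \norm{\frac{\dd}{\dd x}\rho_s}_{L^2(\R)}$. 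Inserting this into $|I_s^k| \le \norm{k}_{L^1(\R)}\norm{\frac{\dd}{\dd x}\rho_s}_{L^2(\R)}\norm{\rho_s}_{L^4(\R)}^2$ leaves a factor $\norm{\rho_s}_{L^2(\R)}^{3/2}\norm{\frac{\dd}{\dd x}\rho_s}_{L^2(\R)}^{3/2}$, which cannot be dominated by $\Psi_t\norm{\frac{\dd}{\dd x}\rho_s}_{L^2(\R)}^2$ with $\Psi_t$ independent of $\rho_s$; after Young's inequality one instead obtains $\frac{\dd}{\dd t}\norm{\rho_t}_{L^2(\R)}^2 \lesssim \norm{\rho_t}_{L^2(\R)}^6$, a Riccati-type bound that yields global control only for small $\norm{\rho_0}_{L^2(\R)}$ --- i.e.\ the content of Remark~\ref{remark: spde_global_solution_weak_existence}, not of this corollary, whose point is precisely that condition~\eqref{eq: L_2_condition} imposes no smallness on the initial datum. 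The missing ingredient is mass conservation combined with the $L^1$-based interpolation $\norm{u}_{L^4(\R)} \le C_{GNS}\norm{u}_{L^1(\R)}^{1/2}\norm{\frac{\dd}{\dd x}u}_{L^2(\R)}^{1/2}$, which is the inequality the constant $C_{GNS}=(4\pi^2/9)^{-1/4}$ in the statement refers to: since $\norm{\rho_s}_{L^1(\R)}=1$ for all times, it gives $\norm{\rho_s}_{L^4(\R)}^2 \le C_{GNS}^2\norm{\frac{\dd}{\dd x}\rho_s}_{L^2(\R)}$ with a universal constant, so that $|I_s^k|$ is bounded by a multiple of $\norm{\frac{\dd}{\dd x}\rho_s}_{L^2(\R)}^2$ alone and the sign condition closes.

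A secondary discrepancy: for $I_s^\sigma$ the paper does not use Gagliardo--Nirenberg at all but a Poincar{\'e}-type inequality on the support $[-L,L]$ of $\frac{\dd}{\dd x}\sigma$, namely $\norm{\rho_s}_{L^2([-L,L])}\le 2L^2\norm{\frac{\dd}{\dd x}\rho_s}_{L^2(\R)}$; your H{\"o}lder-plus-interpolation route produces a power $\norm{\frac{\dd}{\dd x}\rho_s}_{L^2(\R)}^{3/2}$ with an $L^{1/4}$ prefactor, which would not reproduce the factors $2L^2$ and $4L^4$ appearing in~\eqref{eq: L_2_condition}. The continuation and uniqueness steps are fine as you describe them, and they match the paper (restart from $\rho_{T^*}\in C([0,T^*];L^2(\R))$ and iterate, using that the bound and condition~\eqref{eq: L_2_condition} are independent of $T^*$).
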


\begin{proof}
  %We present the proof of corollary only for the stochastic Fokker--Planck equation~\eqref{eq: hk_spde} since  the assertion for the regularized Fokker--Planck equation~\eqref{eq: regularised_hk_spde} follows analogously.
  Let \(\rho \) be the solution given by Theorem~\ref{theorem: existence_of_hk_spde}. Following the proof of Theorem~\ref{theorem: existence_of_hk_spde}, we apply It{\^o}'s formula \cite{krylov2010ito} and obtain, for \(0 \le t \le T^*\),
  \begin{align*}
    &\norm{\rho_t}_{L^2(\R)}^2 - \norm{\rho_0}_{L^2(\R)}^2 \\
    &\quad= -\int\limits_0^t  \norm{  \sigma_s \frac{\dd}{\dd x} \rho_s }_{L^2(\R)}^2   \Id s
    +\int\limits_0^t  \bigg\langle \rho_s, \frac{\dd}{\dd x} ( \sigma_s^2+\nu^2) \frac{\dd}{\dd x} \rho_s + \rho_s \frac{\dd^2}{\dd x^2} ( \sigma_s^2+\nu^2) \bigg \rangle \Id s\\
    &\quad\quad-  2 \int\limits_0^t  \bigg\langle (k *\rho_s)\rho_s , \frac{\dd}{\dd x}  \rho_s \bigg \rangle_{L^2(\R)}   \Id s  \\
    &\quad= -\int\limits_0^t  \norm{  \sigma_s \frac{\dd}{\dd x} \rho_s }_{L^2(\R)}^2   \Id s
    - \int\limits_0^t \bigg \langle \rho_s \frac{\dd}{\dd x} (\sigma_s^2) , \frac{\dd}{\dd x} \rho_s \bigg \rangle_{L^2(\R)}  \Id s \\
    &\quad\quad -  2 \int\limits_0^t  \bigg \langle (k *\rho_s)\rho_s ,\frac{\dd}{\dd x} \rho_s \bigg \rangle _{L^2(\R)}   \Id s  \\
    &\quad\le  -\frac{\lambda}{2} \int\limits_0^t  \norm{\frac{\dd}{\dd x} \rho_s }_{L^2(\R)}^2  \Id s
    +  \sup\limits_{0 \le t \le T} \norm{\frac{\dd}{\dd x} (\sigma^2_t) }_{L^\infty(\R)} \int\limits_0^t \norm{\rho_s }_{L^2([-L,L])} \norm{  \frac{\dd}{\dd x} \rho_s}_{L^2(\R)}  \Id s\\
    &\quad\quad + \frac{1}{\lambda} \int\limits_0^t \norm{(k *\rho_s)\rho_s}_{L^2(\R)}^2 \Id s   \\
    &\quad\le \Big(-\frac{\lambda}{2} + 2L^2  \sup\limits_{0 \le t \le T}  \norm{\frac{\dd}{\dd x} (\sigma^2_t)}_{L^\infty(\R)} \Big) \int\limits_0^t \norm{ \frac{\dd}{\dd x} \rho_s}_{L^2(\R)}^2 \Id s\\
    &\quad\quad+ \frac{1}{\lambda}   \int\limits_0^t \norm{k *\rho_s}_{L^4(\R)}^2 \norm{\rho_s}_{L^4(\R)}^2 \Id s   \\
    &\quad\le  \bigg( -\frac{\lambda}{2} + 2L^2  \sup\limits_{0 \le t \le T} \norm{\frac{\dd}{\dd x}  (\sigma^2_t)}_{L^\infty(\R)} \bigg) \int\limits_0^t \norm{\frac{\dd}{\dd x}  \rho_s }_{L^2(\R)}^2 \Id s \\
    &\quad\quad+ \frac{1}{\lambda}   \int\limits_0^t \norm{k}_{L^1(\R)}^2  \norm{\rho_s}_{L^4(\R)}^2 \norm{\rho_s}_{L^4(\R)}^2 \Id s  \\
    &\quad= \bigg( -\frac{\lambda}{2} + 2L^2  \sup\limits_{0 \le t \le T} \norm{\frac{\dd}{\dd x} (\sigma^2_t)}_{L^\infty(\R)} \bigg) \int\limits_0^t \norm{\frac{\dd}{\dd x} \rho_s }_{L^2(\R)}^2 \Id s
    + \frac{  \norm{k}_{L^1(\R)}^2 }{\lambda} \int\limits_0^t    \norm{\rho_s}_{L^4(\R)}^4 \Id s .
  \end{align*}
  We remark that we used integration by parts in the first step, Young's and H{\"o}lder's inequality in the third step, H{\"o}lder's and Poincar{\'e} inequaity \cite[Theorem~13.19]{LeoniGiovanni2017Afci} in the forth step and Young's inequality for convolutions in the fifth step. Let us recall the Gagliardo--Nirenberg--Sobolev interpolation \cite[Theorem~12.83]{LeoniGiovanni2017Afci}, which states that for \(u \in L^1(\R) \cap W^{1,2}(\R)\) we have
  \begin{equation*}
    \norm{u}_{L^4(\R)} \le C_{GNS} \norm{u}_{L^1(\R)}^{1/2} \norm{\frac{\dd}{\dd x} u}_{L^{2}(\R)}^{1/2} .
  \end{equation*}
  Consequently, applying this inequality on the last term in our estimate and having mass conservation in mind we find
  \begin{align*}
    &\norm{\rho_t}_{L^2(\R)}^2 - \norm{\rho_0}_{L^2(\R)}^2\\
    &\quad\le \bigg( -\frac{\lambda}{2} + 2L^2  \sup\limits_{0 \le t \le T} \norm{\frac{\dd}{\dd x} (\sigma^2_t)}_{L^\infty(\R)} + \frac{C_{GNS}^4 \norm{k}_{L^1(\R)}^2 }{\lambda}  \bigg) \int\limits_0^t \norm{\frac{\dd}{\dd x} \rho_s }_{L^2(\R)}^2 \Id s   .
  \end{align*}
  Hence, if
  \begin{equation*}
    2 L^2 \sup\limits_{0 \le t \le T} \norm{\frac{\dd}{\dd x} (\sigma^2_t)}_{L^\infty(\R)}
    + \bigg(C_{GNS}^4 \norm{k}_{L^1(\R)}^2 + 4 L^4 \sup\limits_{0 \le t \le T} \norm{\frac{\dd}{\dd x} (\sigma^2_t)}_{L^\infty(\R)}^2\bigg)^{1/2} \le \lambda,
  \end{equation*}
  we discover
  \begin{equation}\label{eq: decresing_L_2_norm}
    \norm{\rho}_{S_{\cF^W}^\infty([0,T^*];L^2(\R))} \le \norm{\rho_0}_{L^2(\R)}^2  .
  \end{equation}
  Since \(\rho \in L_{\cF^W}^2([0,T^{*}];W^{1,2}(\R) ) \), we may apply \cite[Theorem~7.1]{Krylov1999AnAA}, which tells us that \(\rho \in C([0,T^{*}], L^2(\R))\), $\P$-a.s., and
  \begin{equation*}
    \E(\norm{\rho_{T^{*}}}_{L^2(\R)}^2) < \infty.
  \end{equation*}
  As a result, we can take \(\rho_{T^{*}}\) as the new initial value and apply \cite[Theorem~5.1]{Krylov1999AnAA} in combination with our above arguments in proof of Theorem~\ref{theorem: existence_of_hk_spde} to obtain a solution on \([T^{*}, 2T^{*}]\), since the estimate \eqref{eq: decresing_L_2_norm} and the condition \eqref{eq: L_2_condition} are independent of \(T^*\). Hence, after finitely many steps we have a global solution~\(\rho\) on \([0,T]\). The uniqueness and \(\rho \in \mathbb{B}\) follows by repeating the inequalities derived in the contraction argument in Theorem~\ref{theorem: existence_of_hk_spde} or using the uniqueness of the SPDE presented in \cite[Theorem~5.1 and Corollary~5.11]{Krylov1999AnAA}.
\end{proof}

\begin{remark}
  In particular, for a constant diffusion \(\sigma > 0\) the condition~\eqref{eq: L_2_condition} reads simply as
  \begin{equation*}
    C_{GNS}^2 \norm{k}_{L^1(\R)} \le \sigma,
  \end{equation*}
  which can be interpreted such that for a given integrable kernel \(k\) the system needs a certain amount of idiosyncratic noise to stay alive for arbitrary \(T>0\). In other word, the diffusion needs to be dominant. 
\end{remark}

Next, we are going to improve the regularity of the solution \(\rho\) by a bootstrap argument.

\begin{lemma} \label{lem: regularity_general_SPDE}
  Let \(\rho_0 \in L^1(\R) \cap W^{2,2}(\R)\) with \(\norm{\rho_0}_{L^1(\R)} = 1 \). Moreover, let Assumption~\ref{ass: sigma_ellipticity+bound} hold and \(k \in  L^2(\R)\). Assume we have a solution
  \begin{equation*}
    \rho \in L^2_{\cF^W}([0,T];W^{1,2}(\R)) \cap S^\infty_{\cF^W}([0,T];L^1(\R) \cap L^{2}(\R))
  \end{equation*}
  of the SPDE~\eqref{eq: hk_spde general} on \([0,T]\). Then \(\rho\) has the following regularity
  \begin{equation*}
    \rho \in L^2_{\cF^W}([0,T];W^{3,2}(\R)) \cap S^2_{\cF^W}([0,T],W^{2,2}(\R)) \cap S^\infty_{\cF^W}([0,T];L^1(\R) \cap L^{2}(\R)).
  \end{equation*}
\end{lemma}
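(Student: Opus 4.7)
The plan is to perform a two-step spatial bootstrap based on the $L^2$-theory of linear SPDEs from \cite[Theorem~5.1]{Krylov1999AnAA}: at each stage I freeze $\rho$ inside the convolution $k*\rho$ and read \eqref{eq: hk_spde general} as a linear SPDE with inhomogeneity
$$F_t := (k*\rho_t)\rho_t$$
whose Sobolev regularity is controlled by the regularity of $\rho$ already available. The key pointwise estimate, used throughout, is
$$\|k*h\|_{L^\infty(\R)} \le \|k\|_{L^2(\R)}\|h\|_{L^2(\R)},$$
which I shall apply to $h=\rho_t$, $h=\partial_x\rho_t$ and $h=\partial_x^2\rho_t$, together with the identity $\partial_x(k*h)=k*\partial_x h$.

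\emph{Step~1 (from $W^{1,2}$ to $W^{2,2}$).} Since $\rho \in L^2_{\cF^W}([0,T];W^{1,2}(\R)) \cap S^\infty_{\cF^W}([0,T]; L^2(\R))$, the product rule together with the preliminary bound yields
$$\|F_t\|_{L^2(\R)} + \|\partial_x F_t\|_{L^2(\R)} \le C \|k\|_{L^2(\R)}\bigl(\|\rho_t\|_{L^2(\R)}^2 + \|\rho_t\|_{L^2(\R)}\|\partial_x\rho_t\|_{L^2(\R)}\bigr),$$
and consequently $F \in L^2_{\cF^W}([0,T]; W^{1,2}(\R))$. Viewing \eqref{eq: hk_spde general} as
$$d\rho_t = \partial_x^2\bigl(\tfrac{1}{2}(\sigma_t^2+\nu^2)\rho_t\bigr)\,dt + \partial_x F_t\,dt - \nu\,\partial_x\rho_t\,dW_t,$$
and noting that the verification of the coefficient assumptions of \cite[Assumption~5.6]{Krylov1999AnAA} is identical to the one carried out in the proof of Theorem~\ref{theorem: existence_of_hk_spde}, I invoke \cite[Theorem~5.1]{Krylov1999AnAA} with source $\partial_x F \in L^2_{\cF^W}([0,T]; L^2(\R))$ and initial datum $\rho_0 \in W^{2,2}(\R) \subseteq W^{1,2}(\R)$ to conclude
$$\rho \in L^2_{\cF^W}([0,T]; W^{2,2}(\R)) \cap S^2_{\cF^W}([0,T]; W^{1,2}(\R)).$$

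\emph{Step~2 (from $W^{2,2}$ to $W^{3,2}$).} With the improved regularity from Step~1, I differentiate $F_t$ twice to get
$$\partial_x^2 F_t = (k*\partial_x^2\rho_t)\rho_t + 2(k*\partial_x\rho_t)\partial_x\rho_t + (k*\rho_t)\partial_x^2\rho_t,$$
and bound each summand in $L^2(\R)$ via the preliminary bound. The cross term produces $\|\partial_x\rho_t\|_{L^2(\R)}^2$, which is only $L^1$ in $t$ from Step~1; to upgrade this to $L^2$ in $t$ I use the one-dimensional Gagliardo--Nirenberg--Sobolev interpolation
$$\|\partial_x\rho_t\|_{L^2(\R)}^2 \lesssim \|\rho_t\|_{L^2(\R)}\|\partial_x^2\rho_t\|_{L^2(\R)},$$
together with $\rho \in S^\infty_{\cF^W}([0,T]; L^2(\R))$ and $\partial_x^2\rho \in L^2_{\cF^W}([0,T]; L^2(\R))$. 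This yields $F \in L^2_{\cF^W}([0,T]; W^{2,2}(\R))$, hence $\partial_x F \in L^2_{\cF^W}([0,T]; W^{1,2}(\R))$. A second application of \cite[Theorem~5.1]{Krylov1999AnAA}, this time with initial datum $\rho_0 \in W^{2,2}(\R)$ and source one derivative more regular, produces
$$\rho \in L^2_{\cF^W}([0,T]; W^{3,2}(\R)) \cap S^2_{\cF^W}([0,T]; W^{2,2}(\R)).$$
Combined with the unchanged hypothesis $\rho \in S^\infty_{\cF^W}([0,T]; L^1(\R)\cap L^2(\R))$, this gives the asserted regularity.

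\emph{Main obstacle.} The Krylov theory is applied off the shelf; the real work lies in estimating $F=(k*\rho)\rho$ in $W^{2,2}$. The delicate point in Step~2 is precisely the cross term $\|\partial_x\rho_t\|_{L^2}^2$, whose time-integrability is a priori only $L^1_t$ and must be promoted to $L^2_t$ via the GNS interpolation above; this is where the one-dimensional setting is genuinely exploited.
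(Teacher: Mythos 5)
Your proposal is correct and follows essentially the same route as the paper: a two-step bootstrap via Krylov's $L^2$-theory, with the identical decomposition of $\partial_x F$ and $\partial_x^2 F$, the same convolution bound $\|k*h\|_{L^\infty}\le\|k\|_{L^2}\|h\|_{L^2}$, and the same Gagliardo--Nirenberg--Sobolev interpolation to absorb the cross term $\|\partial_x\rho_t\|_{L^2}^2$ in the second step. The only cosmetic difference is that the paper explicitly invokes uniqueness of the linear SPDE to identify the higher-regularity solution with the given $\rho$, a point you use implicitly.
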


\begin{proof}
  Let us explore the following bootstrap argument. By assumptions we know \(\rho \in L^2_{\cF^W}([0,T];W^{1,2}(\R) ) \cap S^\infty_{\cF^W}([0,T];L^1(\R) \cap L^2(\R))\) and solves the SPDE
  \begin{equation}\label{eq: proof_SDE_existence_SPDE_hk}
    \Id \rho_t = \frac{\dd^2}{\dd x^2} \Big( \frac{\sigma^2_t + \nu^2 }{2}  \rho_t \Big) \Id t + \frac{\dd}{\dd x} ((k *\rho_t)\rho_t) \Id t - \nu \frac{\dd}{\dd x} \rho_t \Id W_t .
  \end{equation}
  Furthermore, \(\frac{\dd}{\dd x} (k^\tau *\rho_t) = k^\tau * \frac{\dd}{\dd x} \rho_t\) for the smooth approximation \(k^\tau\) of \(k\) and consequently the dominated convergence theorem implies \(\frac{\dd}{\dd x} (k*\rho_t) = k * \frac{\dd}{\dd x} \rho_t\) in the sense of distributions. As a result
  \begin{equation*}
    \frac{\dd}{\dd x} ((k *\rho_t)\rho_t)  = \bigg(k * \frac{\dd}{\dd x} \rho_t\bigg)\rho_t + (k *\rho_t) \frac{\dd}{\dd x} \rho_t
  \end{equation*}
  is well-defined as a function in \(L^1(\R)\). Moreover, we find
  \begin{align*}
    \begin{split}
    \norm{\frac{\dd}{\dd x} ((k *\rho_t)\rho_t)}_{L^2(\R)}
    &\le \norm{\bigg(k * \frac{\dd}{\dd x} \rho_t\bigg)\rho_t}_{L^2(\R)} + \norm{(k *\rho_t) \frac{\dd}{\dd x} \rho_t}_{L^2(\R)} \\
    &\le \norm{ k * \frac{\dd}{\dd x} \rho_t }_{L^\infty(\R)} \norm{\rho_t}_{L^2(\R)}  + \norm{ k *  \rho_t }_{L^\infty(\R)} \norm{\frac{\dd}{\dd x}\rho_t}_{L^2(\R)} \\
    &\le \norm{ k}_{L^2(\R)} \norm{\rho_t}_{W^{1,2}(\R)} \norm{\rho_t}_{L^2(\R)}  + \norm{k}_{L^2(\R)} \norm{\rho_t}_{L^2(\R)}  \norm{\rho_t}_{W^{1,2}(\R)}  \\
    &\le 2\norm{ k}_{L^2(\R)}  \norm{\rho_t}_{W^{1,2}(\R)} \norm{\rho_t}_{L^2(\R)},
    \end{split}
  \end{align*}
  which implies
  \begin{equation*}
    \norm{\frac{\dd}{\dd x} ((k *\rho)\rho)}_{L^2_{\cF^W}([0,T];L^2(\R))} \le 2\norm{ k}_{L^2(\R)} \norm{\rho}_{S^\infty_{\cF^W}([0,T];L^2(\R))} \norm{\rho}_{L^2_{\cF^W}([0,T];W^{1,2}(\R) )}.
  \end{equation*}
  From the uniqueness of the SPDE~\eqref{eq: proof_SDE_existence_SPDE_hk}, \(\rho_0 \in W^{1,2}(\R)\) and \cite[Theorem~5.1 and Theorem~7.1]{Krylov1999AnAA} we obtain
  \begin{equation*}
    \rho \in L^2_{\cF^W}([0,T];W^{2,2}(\R)) \cap S^2_{\cF^W}([0,T];W^{1,2}(\R)).
  \end{equation*}
  With the same strategy and the discovered regularity of \(\rho\) one obtains
  \begin{align*}
    \frac{\dd^2}{\dd x^2} ((k *\rho_t)\rho_t)  =\bigg (k * \frac{\dd^2}{\dd x^2} \rho_t\bigg)\rho_t +  2 \bigg(k * \frac{\dd}{\dd x} \rho_t\bigg)  \frac{\dd}{\dd x}\rho_t  + (k *\rho_t) \frac{\dd^2}{\dd x^2} \rho_t
  \end{align*}
  and consequently
  \begin{align*}
    \norm{\frac{\dd^2}{\dd x^2} ((k *\rho_t)\rho_t)}_{L^2(\R)}
    &\le 2\norm{ k}_{L^2(\R)} \norm{\rho_t}_{W^{2,2}(\R)} \norm{\rho_t}_{L^2(\R)} + 2\norm{ k}_{L^2(\R)}  \norm{ \frac{\dd}{\dd x} \rho_t}_{L^{2}(\R)}^2 \\
    &\le 4 \norm{ k}_{L^2(\R)}  \norm{\rho_t}_{W^{2,2}(\R)} \norm{\rho_t}_{L^2(\R)},
  \end{align*}
  where we used  Gagliardo--Nirenberg--Sobolev interpolation inequality \cite[Theorem~7.41]{LeoniGiovanni2017Afci} in the last step. Therefore, we have
  \begin{equation*}
    \norm{\frac{\dd}{\dd x} ((k *\rho)\rho)}_{L^2_{\cF^W}([0,T];W^{1,2}(\R))}
    \le 4 \norm{ k}_{L^2(\R)}  \norm{\rho}_{S^\infty_{\cF^W}([0,T];L^2(\R))} \norm{\rho}_{L^2_{\cF^W}([0,T];W^{2,2}(\R) )}.
  \end{equation*}
  Again, from the uniqueness of the SPDE~\eqref{eq: proof_SDE_existence_SPDE_hk}, \(\rho_0 \in W^{2,2}(\R)\) and \cite[Theorem~5.1, Corollary~5.11, Theorem~7.1]{Krylov1999AnAA} we obtain
  \begin{equation*}
    \rho \in L^2_{\cF^W}([0,T];W^{3,2}(\R)) \cap S^2_{\cF^W}([0,T],W^{2,2}(\R)).
  \end{equation*}
\end{proof}

As a consequence of Theorem~\ref{theorem: existence_of_hk_spde}, Corollary~\ref{cor: spde_global_weak_solution} and the fact that \(k_{\scriptscriptstyle{HK}},k_{\scriptscriptstyle{HK}}^\tau \in L^1(\R) \cap L^2(\R) \) for all \(\tau > 0\), we obtain the following corollary.  

\begin{corollary} \label{cor: hk_spde_well_pos}
  Let Assumption~\ref{ass: sigma_ellipticity+bound} hold. Further, assume \(0 \le \rho_0 \in L^1(\R) \cap L^2(\R) \) with \(\norm{\rho_0}_{L^1(\R)}=1\). Then, there exists a \(T^*>0\) and a unique non-negative solution \(\rho\), \(\rho^\tau\) of the SPDE~\eqref{eq: hk_spde} and~\eqref{eq: regularised_hk_spde} in the space
  \begin{equation*}
    \mathbb{B}  =  L^2_{\cF^W}([0,T^*];W^{1,2}(\R) ) \cap S^\infty_{\cF^W}([0,T^*];L^1(\R) \cap L^2(\R))
  \end{equation*}
  Furthermore, if \(\frac{\dd}{\dd x} \sigma_t(x) \) has compact support and inequality~\eqref{eq: L_2_condition} holds, then we can extend \(\rho\), \(\rho^\tau\) to a global solution.
\end{corollary}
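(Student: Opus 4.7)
The plan is to observe that Corollary~\ref{cor: hk_spde_well_pos} is a direct specialization of Theorem~\ref{theorem: existence_of_hk_spde} and Corollary~\ref{cor: spde_global_weak_solution} to the particular kernels $k_{\scriptscriptstyle{HK}}$ and $k_{\scriptscriptstyle{HK}}^\tau$. Nothing new is needed beyond verifying the hypotheses on these two kernels.

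First I would check the integrability requirements. The Hegselmann--Krause kernel $k_{\scriptscriptstyle{HK}}(x) = \indicator{[-R,R]}(x)\, x$ is bounded by $R$ and supported in $[-R,R]$, so
\begin{equation*}
  \|k_{\scriptscriptstyle{HK}}\|_{L^1(\R)} \le 2R^2, \qquad \|k_{\scriptscriptstyle{HK}}\|_{L^2(\R)}^2 \le 2R^3/3,
\end{equation*}
so $k_{\scriptscriptstyle{HK}} \in L^1(\R) \cap L^2(\R)$. By the defining properties of the mollification, $k_{\scriptscriptstyle{HK}}^\tau \in \testfunctions{\R}$ has compact support in $[-R-2\tau, R+2\tau]$ and is bounded, hence $k_{\scriptscriptstyle{HK}}^\tau \in L^1(\R) \cap L^2(\R)$ for every $\tau>0$.

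Next, under Assumption~\ref{ass: sigma_ellipticity+bound} and the hypothesis $0 \le \rho_0 \in L^1(\R) \cap L^2(\R)$ with $\|\rho_0\|_{L^1(\R)}=1$, Theorem~\ref{theorem: existence_of_hk_spde} applied with the general kernel $k$ replaced successively by $k_{\scriptscriptstyle{HK}}$ and by $k_{\scriptscriptstyle{HK}}^\tau$ yields the existence of some $T^*>0$ and corresponding unique non-negative solutions $\rho$ and $\rho^\tau$ of \eqref{eq: hk_spde} and \eqref{eq: regularised_hk_spde} in the class $\mathbb{B}$ on $[0,T^*]$, together with mass conservation $\|\rho_t\|_{L^1(\R)}=\|\rho_t^\tau\|_{L^1(\R)}=1$ almost surely.

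Finally, if in addition $\frac{\dd}{\dd x}\sigma_t$ has compact support and inequality~\eqref{eq: L_2_condition} holds (with $\|k\|_{L^1(\R)}$ replaced respectively by $\|k_{\scriptscriptstyle{HK}}\|_{L^1(\R)}$ or $\|k_{\scriptscriptstyle{HK}}^\tau\|_{L^1(\R)}$, both of which are finite by the first step), Corollary~\ref{cor: spde_global_weak_solution} applies and delivers global non-negative solutions on $[0,T]$ for an arbitrary finite $T>0$, which concludes the proof. There is no genuine obstacle here; the only point to be mildly careful about is that $T^*$ obtained from Theorem~\ref{theorem: existence_of_hk_spde} a priori depends on the $L^2$-norm of the kernel through the constant $C(\lambda,\Lambda,k,M)$, so in principle $T^*$ could differ for $k_{\scriptscriptstyle{HK}}$ and for $k_{\scriptscriptstyle{HK}}^\tau$, but taking the minimum of the two gives a common local existence time for both equations.
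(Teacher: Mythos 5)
Your proposal is correct and follows exactly the route the paper intends: the corollary is stated as an immediate consequence of Theorem~\ref{theorem: existence_of_hk_spde} and Corollary~\ref{cor: spde_global_weak_solution} together with the observation that $k_{\scriptscriptstyle{HK}}, k_{\scriptscriptstyle{HK}}^\tau \in L^1(\R)\cap L^2(\R)$, which is precisely what you verify. Your added remark about taking the minimum of the two local existence times is a sensible (and correct) clarification that the paper leaves implicit.
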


\section{Well-posedness of the mean-field SDEs}\label{sec: mean_field_sde}

In this section we establish the existence of unique strong solutions of the mean-field stochastic differential equations~\eqref{eq: mean_field_trajectories_with_common_noise} and~\eqref{eq: regularized_mean_field_trajectories_with_common_noise}. Analogously to the classical theory of ordinary SDEs, it turns out that the mean-field SDEs~\eqref{eq: mean_field_trajectories_with_common_noise} and~\eqref{eq: regularized_mean_field_trajectories_with_common_noise} are linked to the stochastic Fokker--Planck equations~\eqref{eq: hk_spde} and \eqref{eq: regularised_hk_spde}. The same connection between ordinary stochastic differential equations (SDEs) and deterministic Fokker--Planck equations (also known as Kolmogorov forward equations) has been extensively studied, for instance by~\cite{Stewart1986,Figali2008,Trevisan2015,barburoeckner2020}.

\medskip 

Similar to Section~\ref{sec:FP equation}, we prove the existence of strong solutions for general interaction force~\(k\). In the following we consider the mean-field SDE
\begin{align}\label{eq: mean_field_trajectories_with_common_noise general}
  \begin{cases}
  \Id Y_t = -  (k * \rho_t)(Y_t) \Id t + \sigma(t,Y_t) \Id B_t + \nu \Id W_t, \quad Y_0=X_0 , \\
  \rho_t \; \mathrm{is \, the \, conditional \, density \, of} \, Y_t \, \mathrm{given} \, \cF_t^W,
  \end{cases}
\end{align}
for a general interaction force \(k \in L^1(\R) \cap L^2(\R)\). We notice that~\eqref{eq: mean_field_trajectories_with_common_noise general} is just one of the identically distributed SDEs of the system \eqref{eq: mean_field_trajectories_with_common_noise}, if we set \(k =k_{\scriptscriptstyle{HK}}\). In order to guarantee the well-posedness of the SPDE~\eqref{eq: hk_spde general} we make the assumption:

\begin{assumption}\label{ass: spde_existence}
  Let \(0 \le \rho_0 \in L^1(\R) \cap W^{2,2}(\R) \) with \(\norm{\rho_0}_{L^1(\R)}=1\). For \(T>0\) there exists a unique solution~$\rho$ in \(L^2_{\cF^W}([0,T];W^{1,2}(\R))\) of the SPDE~\eqref{eq: hk_spde general} on the interval \([0,T]\) with
  \begin{align*}
    \norm{\rho}_{L^2_{\cF^w}([0,T];W^{1,2}(\R))} + \norm{\rho}_{S^\infty_{\cF^w}([0,T];L^1(\R) \cap L^2(\R))} &\le C
  \end{align*}
  for some finite constant \(C>0\).
\end{assumption}

\begin{remark}
  The existence of a unique solution to the SPDE~\eqref{eq: hk_spde general} in the above assumption is satisfied, for instance, if the conditions stated in Remark~\ref{remark: spde_global_solution_weak_existence}, Theorem~\ref{theorem: existence_of_hk_spde} or Corollary~\ref{cor: spde_global_weak_solution} are satisfied.
\end{remark}

\begin{theorem}\label{theorem: existence_mean_field_SDE}
  Let Assumption~\ref{ass: sigma_ellipticity+bound} as well as Assumption~\ref{ass: spde_existence} hold and \(k \in L^1(\R) \cap L^2(\R)\). Then, the mean-field SDE~\eqref{eq: mean_field_trajectories_with_common_noise general} has a unique strong solution \((Y_t, t \in [0,T])\) and \(\rho_t\) is the conditional density of \(Y_t\) given \(\cF_t^{W}\) for every $t\in [0,T]$. 
\end{theorem}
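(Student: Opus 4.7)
The plan is to treat $\rho$ from Assumption~\ref{ass: spde_existence} as a given, $\cF^W$-adapted input, solve the resulting SDE with random coefficients, and then identify the conditional density of its solution with $\rho$ via a duality argument.

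First, I would apply Lemma~\ref{lem: regularity_general_SPDE} to upgrade the regularity of $\rho$ to $\rho \in S^2_{\cF^W}([0,T];W^{2,2}(\R)) \cap L^2_{\cF^W}([0,T];W^{3,2}(\R))$. Set the $\cF^W$-adapted drift $b_t(\omega,x) := -(k \ast \rho_t(\omega))(x)$; Young's convolution inequality then gives $\norm{b_t}_{L^\infty(\R)} \le \norm{k}_{L^2(\R)} \norm{\rho_t}_{L^2(\R)}$ and $\norm{\partial_x b_t}_{L^\infty(\R)} \le \norm{k}_{L^2(\R)} \norm{\partial_x \rho_t}_{L^2(\R)}$. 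Introduce the stopping times $\tau_n := \inf\{t \ge 0 : \norm{\rho_t}_{W^{2,2}(\R)} \ge n\} \wedge T$, which satisfy $\tau_n \uparrow T$ almost surely by the $S^2$-regularity of $\rho$. On $[0,\tau_n]$ the drift $b$ is bounded and globally Lipschitz in $x$ and $\sigma$ is Lipschitz by Assumption~\ref{ass: sigma_ellipticity+bound}, so standard strong SDE theory (Picard iteration with random but integrable Lipschitz constants) yields a unique strong solution $Y^n$ of
\begin{equation*}
\d Y_t = b_t(Y_t)\dd t + \sigma(t,Y_t)\dd B_t + \nu \dd W_t, \quad Y_0 = X_0
\end{equation*}
on $[0,\tau_n]$, and patching across $n$ gives a unique strong solution $Y$ on $[0,T]$.

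The second step is to identify, for each $t \in [0,T]$, the density $\rho_t$ as the conditional density of $Y_t$ given $\cF_t^W$. For a test function $\varphi \in \testfunctions{\R}$, consider the BSPDE
\begin{equation*}
-\d u_t(x) = \Bigl[\tfrac{\sigma_t^2+\nu^2}{2}\partial_x^2 u_t(x) - (k \ast \rho_t)(x)\, \partial_x u_t(x) + \nu \partial_x q_t(x)\Bigr] \dd t - q_t(x) \dd W_t, \quad u_T = \varphi.
\end{equation*}
On each $[0,\tau_n]$ the random coefficient $k \ast \rho_t$ is bounded with bounded spatial derivatives up to sufficiently high order; this allows me to invoke the BSPDE theory in~\cite{DuTang2013} and obtain a classical solution $(u,q)$ of the regularity required by the It{\^o}--Wentzell formula. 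Applying this formula to $u_t(Y_t)$, the drift terms generated by the SDE and by the BSPDE cancel by construction of the dual operator, leaving a local martingale; taking conditional expectations with respect to $\cF_t^W$ and using that $X_0$ has density $\rho_0$ independently of $\cF^W$, I obtain $\E[\varphi(Y_t) \mid \cF_t^W] = \int_\R u_0(x)\rho_0(x) \dd x$. Symmetrically, applying It{\^o}--Wentzell to the $L^2$-pairing $\qv{u_s,\rho_s}_{L^2(\R)}$ and using that $\rho$ itself solves the forward SPDE~\eqref{eq: hk_spde general} with the same $k \ast \rho$ inside the drift, the same cancellation yields $\int u_0 \rho_0 \dd x = \qv{\varphi, \rho_t}_{L^2(\R)}$. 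Hence $\E[\varphi(Y_t) \mid \cF_t^W] = \qv{\varphi, \rho_t}_{L^2(\R)}$ for every $\varphi$ and every $t \le \tau_n$, and sending $n \to \infty$ gives the identification on $[0,T]$. Uniqueness follows because any other strong solution $Y'$ would, by the same argument, force its conditional density to solve~\eqref{eq: hk_spde general} and therefore equal $\rho$ by Assumption~\ref{ass: spde_existence}, after which pathwise uniqueness from the SDE step gives $Y' = Y$.

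The main obstacle is the identification step: producing a classical solution to the BSPDE with $\cF^W$-adapted, random coefficients of regularity high enough to legitimately apply It{\^o}--Wentzell. This is precisely why the stopping time $\tau_n$ is introduced, since uniform spatial Sobolev bounds on $k \ast \rho_t$ only hold up to $\tau_n$ and would otherwise break down on events where $\norm{\rho_t}_{W^{2,2}(\R)}$ becomes large. A second delicate point is the careful bookkeeping of the cross-variation terms between the forward SDE and the backward stochastic integral in $\d W_t$, which must combine to cancel rather than to contribute an extra drift.
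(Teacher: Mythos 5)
Your overall architecture---freeze $\rho$, solve the resulting SDE with random Lipschitz coefficients (the Lipschitz bound coming from $\norm{\partial_x(k*\rho_t)}_{L^\infty(\R)}\le\norm{k}_{L^1(\R)}\norm{\rho_t}_{W^{2,2}(\R)}$ after the regularity upgrade of Lemma~\ref{lem: regularity_general_SPDE}), then identify the conditional density by a dual BSPDE together with an It{\^o}--Wentzell formula, using stopping times to secure enough regularity for a classical BSPDE solution---is exactly the paper's. The gap is in the identification step. After applying It{\^o}--Wentzell to $u_t(Y_t)$ you are left with two stochastic integrals, $\int_0^t\sigma_s(Y_s)\,\partial_x u_s(Y_s)\dd B_s$ and $\int_0^t\bigl(q_s+\nu\,\partial_x u_s\bigr)(Y_s)\dd W_s$. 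Conditioning on $\cF_t^W$ does \emph{not} annihilate the second one: $\E\bigl[\int_0^t H_s\dd W_s\mid\cF_t^W\bigr]=\int_0^t\E[H_s\mid\cF_s^W]\dd W_s$, a nontrivial $\cF_t^W$-measurable random variable. Likewise the forward--backward pairing $\qv{u_s,\rho_s}_{L^2(\R)}$ is not pathwise constant but only a martingale, so $\qv{u_0,\rho_0}=\E[\qv{\varphi,\rho_t}]$ rather than $\qv{\varphi,\rho_t}$. Pushed through correctly, your computation with a \emph{deterministic} terminal condition $\varphi$ yields only $\E[\varphi(Y_t)]=\E[\qv{\varphi,\rho_t}]$, i.e.\ the unconditional marginal law of $Y_t$, which cannot identify the genuinely random $\cF_t^W$-conditional density. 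Indeed your stated conclusion $\E[\varphi(Y_t)\mid\cF_t^W]=\int_\R u_0\rho_0\dd x$ asserts that this conditional expectation is deterministic (since $\cF_0^W$ is trivial), which is false in the presence of environmental noise.

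The missing idea, which is the crux of the paper's Lemma~\ref{lemma: dual_BSPDE}, is to take a \emph{random} terminal condition $G=\varphi\,\xi$ with $\xi$ bounded and $\cF_{T_1}^W$-measurable, and to condition at time zero on $\mathcal{G}_0=\sigma(\sigma(Y_0),\cF_0^W)$ rather than on $\cF_t^W$. Since $\mathcal{G}_0\subset\cF_0$, both stochastic integrals are martingale increments with vanishing $\mathcal{G}_0$-conditional expectation, giving $u_0(Y_0)=\E[G(Y_{T_1})\mid\mathcal{G}_0]$; integrating against $\rho_0$ and combining with the forward--backward duality $\qv{\rho_0,u_0}=\E[\qv{G,\rho_{T_1}}]$ yields $\E[\xi\,\qv{\varphi,\rho_{T_1}}]=\E[\xi\,\varphi(Y_{T_1})]$ for all such $\xi$, hence $\qv{\varphi,\rho_{T_1}}=\E[\varphi(Y_{T_1})\mid\cF_{T_1}^W]$ almost surely. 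The randomness of the terminal datum is what lets the duality ``see'' the conditional law rather than only its mean. The rest of your argument---Step~1 (the paper avoids your stopping-time patching there by citing SDE results with random, a.s.\ finite Lipschitz constants, but your version works too), the stopping times $\tau_m$ to place $F=(k*\rho)\partial_x u$ in $W^{1,2}$ so that the BSPDE solution is classical, and the uniqueness argument via the forward SPDE---matches the paper.
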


The idea to prove Theorem~\ref{theorem: existence_mean_field_SDE} is to freeze the measure \(\rho_t\) in the SDE~\eqref{eq:  mean_field_trajectories_with_common_noise general} and use a duality argument by introducing a dual backward stochastic partial differential equation (BSPDE) in Lemma~\ref{lemma: dual_BSPDE} in order to prove that \(\rho_t\) is the conditional density of \(Y_t\) for given \(\cF^W_t\).

\begin{proof}
  Let $\rho$ be the unique solution of the SPDE~\eqref{eq: hk_spde general} as in Assumption~\ref{ass: spde_existence}. We recall that by the regularity result presented in Lemma~\ref{lem: regularity_general_SPDE} we have
  \begin{equation*}
    \rho \in L^2_{\cF^W}([0,T];W^{3,2}(\R)) \cap S^2_{\cF^W}([0,T],W^{2,2}(\R)).
  \end{equation*}

  \textit{Step~1.} Fix $\rho$ in the mean-field SDE~\eqref{eq: mean_field_trajectories_with_common_noise general} and notice that we are dealing with a standard SDE with random coefficients. Hence, we can apply classical results if the drift coefficient \(k*\rho\) is Lipschitz continuous. The regularity of the solution, Sobolev embedding theorem~\cite[Theorem 8.8]{BrezisHaim2011FaSs} and Morrey's inequality yields~\cite[Theorem 12.66]{LeoniGiovanni2017Afci}
  \begin{align*}
    \sup\limits_{0 \le t \le T } \sup\limits_{\substack{x,y \in \R \\ x \neq y }} \frac{|(k*\rho_t)(\omega,x)-(k*\rho_t)(\omega,y) |}{|x-y|}
    &\le \sup\limits_{0 \le t \le T } \norm{k*\rho_t(\omega)}_{W^{2,2}(\R)} \\
    &\le \norm{ k}_{L^1(\R)} \sup\limits_{0 \le t \le T }   \norm{\rho_t(\omega)}_{W^{2,2}(\R)}
  \end{align*}
  and
  \begin{align*}
    \sup\limits_{0 \le t \le T } |(k*\rho_t)(\omega,0) |
    &\le \sup\limits_{0 \le t \le T }  \sup\limits_{x \in \R } |(k*\rho_t)(\omega,x) | \\
    &\le \norm{ k}_{L^1(\R)} \sup\limits_{0 \le t \le T }   \norm{\rho_t(\omega)}_{W^{1,2}(\R)} .
  \end{align*}
  Furthermore, the maps \(\omega \mapsto  \sup\limits_{0 \le t \le T }   \norm{\rho_t(\omega)}_{W^{2,2}(\R) }\) and \(\omega \mapsto  \sup\limits_{0 \le t \le T }   \norm{\rho_t(\omega)}_{W^{1,2}(\R) }\) are measurable. Therefore, standard results for the existence of SDEs with Lipschitz continuous drift, see e.g. \cite[Theorem~1.1]{KrylovRocknerZabcyk1998} or \cite[Theorem~2.2]{random_coeff_SDE}, imply that the following SDE has a unique strong solution
  \begin{equation}\label{eq: linearised_mean_field_sde_with_enviromental_noise}
  \begin{cases}
    \Id \overline{Y}_t =  -  (k * \rho_t)(\overline{Y}_t) \Id t + \sigma(t,\overline{Y}_t ) \Id B_t + \nu \Id W_t, \\
    \overline{Y}_0  \sim \rho_0 \; . 
    \end{cases}
  \end{equation}

  \textit{Step~2.} We are going to use a dual argument (see Lemma~\ref{lemma: dual_BSPDE} below) to show that \(\rho_t\) is the conditional density of $Y_t$ with respect to \(\cF_t^W\). Hence, let \(T_1 > 0\) and \((u_t, t \in [0,T_1])\) be the solution of the BSPDE~\eqref{eq: bspde_hk_model} below with terminal condition \(G \in L^\infty(\Omega, \cF_{T_1} , \testfunctions{\R}) \). Utilizing the dual equation from Lemma~\ref{lemma: dual_BSPDE}, the dual analysis \cite[Corollary~3.1]{zhou1992duality} and the fact that \(u_0\) is \(\cF_0^W\)-measurable, we find
  \begin{equation*}
    \qv{\rho_0, u_0} = \E(\qv{G,\rho_{T_1}}) .
  \end{equation*}
  On the other hand we can use the explicit representation of \(u_0\) given by Lemma~\ref{lemma: dual_BSPDE} to obtain
  \begin{align*}
    \qv{\rho_0, u_0}
    = \int_\R u_0(y) \rho_0(y) \Id y
    = \E(u_0(\overline{Y}_0))
    = \E( \E( G(\overline{Y}_{T_1}) | \,\sigma( \cF_0^W ,\sigma(\overline{Y}_0) ) ) ) = \E(G(\overline{Y}_{T_1})).
  \end{align*}
  Now, let \(G=\phi \xi\) with \(\phi \in \testfunctions{\R}\) and \(\xi \in L^\infty(\Omega, \cF_{T_1})\). Consequently, we obtain
  \begin{equation*}
    \E(\xi \qv{\phi,\rho_{T_1}}) = \E( \xi \phi(\overline{Y}_{T_1})) = \E(\xi  \E( \phi(\overline{Y}_{T_1}) \, | \, \cF_{T_1}^W)),
  \end{equation*}
  which proves
  \begin{equation*}
    \qv{\phi,\rho_{T_1}} =  \E( \phi(\overline{Y}_{T_1}) \, | \, \cF_{T_1}^W), \quad \P\text{-a.e.},
  \end{equation*}
  and, therefore, \(\rho_{T_1}\) is the conditional density of \(\overline{Y}_{T_1}\) given \(\cF_{T_1}\). Since \(T_1\) is arbitrary, we have proven the existence of a strong solution \(Y\) of the mean-field SDE~\eqref{eq: mean_field_trajectories_with_common_noise general}.

  On the other hand, if \eqref{eq: mean_field_trajectories_with_common_noise general} has a strong solution with conditional density
  \begin{equation*}
    \rho \in L^2_{\cF^W}([0,T^*];W^{1,2}(\R) ) \cap S^\infty_{\cF^W}([0,T^*];L^1(\R) \cap L^2(\R)),
  \end{equation*}
  then the conditional density process of \(Y^1\) is the solution to the SPDE~\eqref{eq: hk_spde general}. Indeed, if we first apply It{\^o}'s formula with a function \(\varphi \in \testfunctions{\R}\), then take the conditional expectation with respect to the filtration \(\cF^W\) and subsequently applying stochastic Fubini theorem \cite[Lemma~A.5]{Hammersley2020}, we conclude that density process of \(Y_t\) satisfies \eqref{eq: solution_hk_spde}. By the uniqueness of the SPDE~\eqref{eq: hk_spde general} we obtain that \(\rho_t\), which is the solution constructed in Theorem~\ref{theorem: existence_of_hk_spde}, is the conditional density of \(Y_t\) given \(\cF^W_t\) for all \(t \in [0,T]\).
\end{proof}

In the following lemma, we close the gap in the above proof by demonstrating the existence of a solution of the BSPDE~\eqref{eq: bspde_hk_model} and the explicit representation of \(u_0\).

\begin{lemma}[Dual BSPDE]\label{lemma: dual_BSPDE}
  Let Assumption~\ref{ass: sigma_ellipticity+bound} and Assumption~\ref{ass: spde_existence} hold along with \(k \in L^1(\R) \cap L^2(\R)\). Then, for every  \(T_1 \in (0,T]\) and \(G \in L^\infty(\Omega, \cF_{T_1} , \testfunctions{\R}) \) the following BSPDE
  \begin{align}\label{eq: bspde_hk_model}
     \begin{split}
    \Id u_t &= - \bigg( \frac{\sigma^2_t+\nu^2}{2} \frac{\dd^2}{\dd x^2} u_t -(k*\rho_t) \frac{\dd}{\dd x} u_t  + \nu \frac{\dd}{\dd x} v_t \bigg) \Id t   + v_t \Id W_t,\quad t\in [0,T],  \\
    u_{T_1} &= G,
    \end{split}
  \end{align}
  admits a unique solution
  \begin{equation*}
    (u,v) \in (  L^2_{\cF^W}([0,T]; W^{2,2}(\R))  \cap S^2_{\cF^W}([0,T];W^{1,2}(\R))) \times L^2_{\cF^W}([0,T];W^{1,2}(\R)),
  \end{equation*}
  i.e. for any \(\varphi \in \testfunctions{\R}\) the equality
  \begin{align*}
    \qv{u_t,\varphi}_{L^2(\R)} = & \;   \qv{G,\varphi}_{L^2(\R)}
    + \int\limits_t^{T_1} \bigg\langle   \frac{\sigma^2_s+\nu^2}{2} \frac{\dd^2}{\dd x^2} u_s -(k*\rho_s) \frac{\dd}{\dd x} u_s  + \nu \frac{\dd}{\dd x} v_s, \varphi \bigg\rangle_{L^2(\R)} \Id s  \\
    & - \int\limits_t^{T_1}  \qv{  v_s ,    \varphi}_{L^2(\R)} \Id W_s
  \end{align*}
  holds for all \(t \in [0,T]\) with probability one. Moreover, we have
  \begin{equation}\label{eq: bspde_charc_conditional_expectation}
    u_0(\overline{Y}_0) = \E(G(\overline{Y}_{T_1}) \, | \, \sigma(\sigma(\overline{Y}_0), \cF_0^W )),
  \end{equation}
  where \((\overline{Y}_t, t \in [0,T])\) is the solution of the linearised SDE~\eqref{eq: linearised_mean_field_sde_with_enviromental_noise} in the proof of Theorem~\ref{theorem: existence_mean_field_SDE}.
\end{lemma}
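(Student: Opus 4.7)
The plan is to construct $(u,v)$ by invoking the classical linear BSPDE theory once the coefficient and stochastic parabolicity hypotheses are in place, and then to obtain the representation~\eqref{eq: bspde_charc_conditional_expectation} via an It{\^o}--Wentzell calculation along the linearised trajectory $\overline{Y}$.

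\textbf{Existence and uniqueness of $(u,v)$.} I would first verify that the coefficients of~\eqref{eq: bspde_hk_model} fit the framework of the linear BSPDE theory of Du--Tang (\cite{DuTang2013}, in the spirit of the $L^p$-theory~\cite{Krylov1999AnAA}). The leading coefficient $(\sigma_t^2+\nu^2)/2$ is $C^3_b$ in $x$ by Assumption~\ref{ass: sigma_ellipticity+bound}, and the \emph{stochastic parabolicity} condition reduces to $(\sigma_t^2+\nu^2) - \nu^2 = \sigma_t^2 \ge \lambda > 0$, which is precisely Assumption~\ref{ass: sigma_ellipticity+bound}(i) and absorbs the second-order correction coming from the $\nu\,\tfrac{\dd}{\dd x} v_t$ term. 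The drift $-(k*\rho_t)$ is $\cF^W$-adapted; Lemma~\ref{lem: regularity_general_SPDE} combined with Young's convolution inequality (using $k \in L^1(\R)$) transfers the regularity of $\rho$ to $k*\rho \in L^2_{\cF^W}([0,T]; W^{3,2}(\R)) \cap S^2_{\cF^W}([0,T]; W^{2,2}(\R))$, and Morrey's embedding then gives pathwise $C^2_b$-control. Since $G \in L^\infty(\Omega, \cF_{T_1}, \testfunctions{\R})$ lies in every Bessel potential space, the BSPDE existence theorem produces a unique $(u,v)$ in the claimed space. A standard spatial bootstrap yields the stronger regularity $u \in L^2_{\cF^W}([0,T]; W^{3,2}(\R)) \cap S^2_{\cF^W}([0,T]; W^{2,2}(\R))$ and $v \in L^2_{\cF^W}([0,T]; W^{2,2}(\R))$, which by Sobolev embedding in dimension one provides pathwise $u_t \in C^2_b(\R)$ and $v_t \in C^1_b(\R)$ for almost every~$t$.

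\textbf{Representation via It{\^o}--Wentzell.} I would apply Kunita's It{\^o}--Wentzell formula to $u_t(\overline{Y}_t)$, noting $\dd \qv{\overline{Y}}_t = (\sigma(t,\overline{Y}_t)^2+\nu^2)\,\dd t$ and $\dd\qv{W,\overline{Y}}_t = \nu\,\dd t$. The resulting drift combines four contributions—the BSPDE drift at $\overline{Y}_t$, the first-order term from $\dd \overline{Y}_t$, the It{\^o} correction $\tfrac12 u''_t(\overline{Y}_t)(\sigma(t,\overline{Y}_t)^2+\nu^2)\,\dd t$, and the Wentzell cross term $\nu v'_t(\overline{Y}_t)\,\dd t$—and these cancel identically by the structure of~\eqref{eq: bspde_hk_model}, which is engineered to be the formal dual of the Kolmogorov forward operator of $\overline{Y}$. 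Hence
\[
\dd u_t(\overline{Y}_t) = u'_t(\overline{Y}_t)\sigma(t,\overline{Y}_t)\,\dd B_t^{1} + \bigl[v_t(\overline{Y}_t) + \nu u'_t(\overline{Y}_t)\bigr]\,\dd W_t,
\]
a local martingale upgraded to a square-integrable martingale on $[0, T_1]$ by the pathwise bounds on $u'$ and $v$ from the bootstrap. Integrating from $0$ to $T_1$, substituting $u_{T_1} = G$, and taking conditional expectation with respect to $\sigma(\sigma(\overline{Y}_0), \cF_0^W)$—under which $u_0(\overline{Y}_0)$ is measurable, since $u_0$ is $\cF_0^W$-measurable—produces~\eqref{eq: bspde_charc_conditional_expectation}.

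\textbf{Main obstacle.} The key difficulty is securing enough \emph{pathwise} regularity of $(u,v)$ to justify the It{\^o}--Wentzell formula in classical form, since Lemma~\ref{lem: regularity_general_SPDE} only yields $S^2$- rather than $S^\infty$-control of $\rho$ in $W^{2,2}(\R)$. The remedy I would adopt—anticipated in the paper's introductory discussion of the BSPDE approach—is a localisation via stopping times $\tau_n := \inf\{t\ge 0 : \|\rho_t\|_{W^{2,2}(\R)} \ge n\} \wedge T_1$: on each $[0, \tau_n]$ the BSPDE coefficients are uniformly bounded, so the classical existence theory applies cleanly and the It{\^o}--Wentzell step is fully justified. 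One then passes to the limit $\tau_n \to T_1$ a.s., with uniqueness of the BSPDE ensuring that the localised solutions patch consistently into a solution on $[0, T_1]$.
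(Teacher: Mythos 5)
Your first step (existence and uniqueness of \((u,v)\) via the linear \(L^2\)-theory of BSPDEs, with stochastic parabolicity reducing to \(\sigma_t^2\ge\lambda\) and the first-order coefficient \(k*\rho_t\) controlled through \(\norm{\rho}_{S^\infty_{\cF^W}([0,T];L^2(\R))}\)) coincides with the paper's argument. The genuine gap is the regularity claim you then build everything on: the ``standard spatial bootstrap'' to \(u\in L^2_{\cF^W}([0,T];W^{3,2}(\R))\cap S^2_{\cF^W}([0,T];W^{2,2}(\R))\) is not available, because it would require the derivative of the coefficient, \(\frac{\dd}{\dd x}(k*\rho_t)=k*\frac{\dd}{\dd x}\rho_t\), to be bounded uniformly in \((\omega,t,x)\), i.e.\ \(\rho\in S^\infty_{\cF^W}([0,T];W^{1,2}(\R))\); Assumption~\ref{ass: spde_existence} and Lemma~\ref{lem: regularity_general_SPDE} only give \(S^2\)-control, and the paper states explicitly that this is the obstruction. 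Since your It{\^o}--Wentzell step, the pathwise \(C^2_b\)-bounds, and the martingale upgrade all rest on that bootstrap, the representation~\eqref{eq: bspde_charc_conditional_expectation} is not justified as written. Your proposed remedy does not repair this: for a \emph{backward} equation regularity propagates from the terminal datum at \(T_1\) backwards in time, so restricting to \([0,\tau_n]\) with \(\tau_n\) built from \(\norm{\rho_t}_{W^{2,2}(\R)}\) does not produce a well-posed problem with smooth data --- the ``terminal value'' \(u_{\tau_n}\) is only known to lie in \(W^{1,2}(\R)\) (since \(u\in S^2_{\cF^W}([0,T];W^{1,2}(\R))\)), the uncontrolled stretch \((\tau_n,T_1]\) is exactly where the solution is generated from \(G\), BSPDEs with random terminal times are not the clean classical setting you invoke, and the final ``patching''/limit \(\tau_n\uparrow T_1\) is asserted rather than proved.

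The paper's mechanism is structurally different and is what makes the lemma work: it keeps the \(W^{2,2}\)-solution \(u\) already constructed, \emph{freezes the first-order term as a free term} \(F(t,x)=(k*\rho_t)(x)\frac{\dd}{\dd x}u_t(x)\), truncates it by a stopping time \(\tau_m\) built from \(\norm{\rho_t}_{W^{1,2}(\R)}\), so that \(F_m=F\indicator{(0,\tau_m]}\in L^2_{\cF^W}([0,T];W^{1,2}(\R))\), and solves the auxiliary BSPDEs with unchanged leading operator and terminal datum \(G\). These auxiliary solutions \(u^m\) do enjoy \(W^{3,2}\)-regularity and admit classical versions by \cite[Corollary~2.2]{DuTang2013}, so the It{\^o}--Wentzell formula can be applied to \(u^m(\overline{Y}_t)\); the drift does \emph{not} cancel identically but leaves the residual \(F(t,\overline{Y}_t)(\indicator{(0,\tau_m]}(t)-1)\). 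The lemma then follows by letting \(m\to\infty\): an \(L^1\)-estimate (via Morrey and H{\"o}lder) shows the conditional expectation of the residual vanishes, and an a priori BSPDE \(L^2\)-estimate for \(\widetilde{u}^m=u-u^m\) in terms of \(\norm{F(1-\indicator{(0,\tau_m]})}_{L^2_{\cF^W}([0,T];L^2(\R))}\) transfers~\eqref{eq: bspde_charc_conditional_expectation} from \(u^m\) to \(u\). This quantitative limiting argument is the core of the paper's proof and is entirely missing from your proposal.
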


\begin{proof}
  We note that by Theorem~\ref{theorem: existence_mean_field_SDE} we have \(\rho \in  L^2_{\cF}([0,T];W^{3,2}(\R)) \cap S^2_{\cF}([0,T];W^{2,2}( \R)) \). Our approach is to verify the assumptions of the \(L^2\)-theory (see for example \cite[Theorem~5.5]{kai_du_L_p_bspde_2011}) for BSPDEs. Let \(u_1, u_2 \in W^{2,2}(\R)\), then
  \begin{align*}
    \norm{(k*\rho_t) \frac{\dd}{\dd x}u_1- (k*\rho_t) \frac{\dd}{\dd x} u_2}_{L^{2}(\R)}
    &\le \norm{k*\rho_t}_{L^\infty(\R)} \norm{ \frac{\dd}{\dd x} (u_1-u_2)}_{L^2(\R)}  \\
    &\le \norm{k}_{L^2(\R)} \norm{\rho_t}_{L^2(\R)}  \norm{ \frac{\dd}{\dd x} (u_1-u_2)}_{L^2(\R)} \\
    &\le  \norm{k}_{L^2(\R)}   \norm{\rho_t}_{L^2(\R)}  \norm{u_1-u_2}_{W^{1,2}(\R)} .
  \end{align*}
  Now, by Theorem~\ref{theorem: existence_of_hk_spde}, \( \norm{\rho_t}_{L^2(\R)}  \) is uniformly bounded in \((\omega,t) \in \Omega \times [0,T]\) and the interpolation theorem \cite[Theorem~5.2]{AdamsRobertA2003Ss} implies for all \(\epsilon>0\),
  \begin{align*}
    &\norm{(k*\rho_t) \frac{\dd}{\dd x}u_1- (k*\rho_t) \frac{\dd}{\dd x} u_2}_{L^{2}(\R)}\\
    &\quad\le \epsilon  \norm{u_1-u_2}_{W^{2,2}(\R)} + C(k, \norm{\rho}_{S^\infty_{\cF^W}([0,T]; L^2(\R))} )  \kappa(\epsilon)  \norm{u_1-u_2}_{L^{2}(\R)}
  \end{align*}
  for some non-negative decreasing function~\(\kappa\). Hence, Assumption~5.4 in \cite[Theorem~5.5]{kai_du_L_p_bspde_2011} is satisfied. The other assumptions are also easily verified. As a result we obtain a solution
  \begin{equation*}
    (u,v) \in (  L^2_{\cF^W}([0,T]; W^{2,2}(\R))  \cap S^2_{\cF^W}([0,T];L^{2}(\R))) \times L^2_{\cF^W}([0,T];W^{1,2}(\R))
  \end{equation*}
  of the BSPDE~\eqref{eq: bspde_hk_model}. Here, the fact that \(u \in  S^2_{\cF^W}([0,T];L^{2}(\R))\) is a direct consequence of \cite[Theorem~2.2]{DU2BSPDE2010}.

  It remains to show that the equality~\eqref{eq: bspde_charc_conditional_expectation} holds. By the bound
  \begin{equation*}
    \E\bigg( \sup\limits_{t \le T_1} \norm{\rho_{t}}_{W^{1,2}(\R)}^2 \bigg)
    < \infty
  \end{equation*}
  given by \(\rho \in S^2_{\cF^W}([0,T];W^{1,2}(\R))\), we observe that there exists a set \(\Omega'\) with \(\P(\Omega')=1\) and for all \(\omega \in \Omega'\) we have
  \begin{equation}\label{eq: u_a_e_derivative_bound}
    \sup\limits_{t \le T_1} \norm{\rho_t(\omega,\cdot)}_{W^{1,2}(\R)} < \infty.
  \end{equation}
  Also, the map \((\omega, t) \to \norm{\rho_t(\omega,\cdot)}_{W^{1,2}(\R)}\) is predictable with respect to \(\cF^W\) by the \(L^2 \)-SPDE theory. Consequently, we can define for each \(m \in \N\) the stopping time
  \begin{equation*}
    \tau_m(\omega) = \inf \{ t \in [0,T_1]  :   \norm{\rho_t(\omega,\cdot)}_{W^{1,2}(\R)} \ge m \}
  \end{equation*}
  and \(\tau_m \uparrow T_1\) by \eqref{eq: u_a_e_derivative_bound}. Furthermore, let us define
  \begin{equation*}
    F(t,x) := (k*\rho_t)(x) \frac{\dd}{\dd x} u_t(x)
    \quad \text{and}\quad
    F_m(t,x) := F(t,x) \indicator{(0,\tau_m]}(t),
  \end{equation*}
  and note that \(F_m \in L^2_{\cF^W}([0,T]; L^{2}(\R)) \) still satisfies all assumptions of the \(L^2\)-BSPDE theory (\cite[Theorem~5.5]{kai_du_L_p_bspde_2011}) and therefore there exists a solution
  \begin{equation*}
    (u^m, v^m) \in (  L^2_{\cF^W}([0,T]; W^{2,2}(\R))  \cap S^2_{\cF^W}([0,T];L^{2}(\R))) \times L^2_{\cF^W}([0,T];W^{1,2}(\R))
  \end{equation*}
  of the following BSPDE
  \begin{align*}
    \begin{split}
    \Id u^m_t &= - \bigg( \frac{\sigma^2_t+\nu^2}{2} \frac{\dd^2}{\dd x^2} u^m_t -F_m(t)  + \nu \frac{\dd}{\dd x} v^m_t \bigg ) \Id t   + v^m_t \Id W_t , \\
    u^m_{T_1} &= G,
    \end{split}
  \end{align*}
  for each \(m \in \N\).

  In the next step we want to obtain a \(L^2_{\cF^W}([0,T]; W^{1,2}(\R))\)-bound for \(F_m\). The \(L^2\)-estimate follows directly from the above computations. For the weak derivative we compute
  \begin{align*}
    &\norm{\frac{\dd}{\dd x}\bigg( (k*\rho_t) \frac{\dd}{\dd x}u_t \bigg) \indicator{(0,\tau_m]} }_{L^2(\R)}\\
    &\quad\le \norm{ \indicator{(0,\tau_m]} \bigg(k* \frac{\dd}{\dd x}\rho_t\bigg) \frac{\dd}{\dd x} u_t}_{L^2(\R) }  + \norm{ (k*\rho_t) \frac{\dd^2}{\dd x^2} u_t}_{L^2(\R)} \\
    &\quad\le  \indicator{(0,\tau_m]} \norm{ \bigg(k* \frac{\dd}{\dd x}\rho_t\bigg) \frac{\dd}{\dd x}u_t}_{L^2(\R)}  + \norm{k}_{L^2(\R)} \norm{\rho_t}_{S^\infty_{\cF}([0,T];L^{2}(\R)))}  \norm{u_t}_{W^{2,2}(\R)}.
  \end{align*}
  Since \(\rho \in S^\infty_{\cF^W}([0,T];L^{2}(\R)))\), the last term behaves nicely. However, the first term would be problematic because without the stopping time we do not have a similar \(L^\infty\)-estimate for the derivative, i.e. \(\rho \in S^\infty_{\cF^W}([0,T];W^{1,2}(\R)))\). Hence, in order to overcome this problem we introduced the stopping time \(\tau_m\) and, therefore, we discover
  \begin{align*}
    &\norm{\frac{\dd}{\dd x}\bigg( (k*\rho_t) \frac{\dd}{\dd x}u_t\bigg) \indicator{(0,\tau_m]} }_{L^2_{\cF^W} ([0,T_1];L^2(\R))} \\
    &\quad\le   \norm{k}_{L^2(\R)}^2 \E \bigg( \int\limits_{0}^{T_1} \indicator{(0,\tau_m]}(t) \norm{\frac{\dd}{\dd x} \rho_t}_{L^2(\R)}^2 \norm{\frac{\dd}{\dd x}u_t}_{L^2(\R)}^2  \,\mathrm{d} t  \bigg)\\
    &\quad\quad+ \norm{k}_{L^2(\R)} \norm{\rho_t}_{S^\infty_{\cF}([0,T];L^{2}(\R)))}  \norm{u_t}_{L^2_{\cF^W}([0,T]; W^{2,2}(\R)) }   \\
    &\quad\le  \norm{k}_{L^2(\R)} m^2  \norm{u}_{L^2_{\cF^W}([0,T]; W^{1,2}(\R)) } \\
    &\quad\quad+ \norm{k}_{L^2(\R)} \norm{\rho_t}_{S^\infty_{\cF}([0,T];L^{2}(\R)))}  \norm{u_t}_{L^2_{\cF^W}([0,T]; W^{2,2}(\R)) } .
  \end{align*}
  As a result, we obtain
  \begin{equation*}
    \norm{F_m}_{L^2_{\cF^W}([0,T];W^{1,2}(\R))} < \infty
  \end{equation*}
  for each \(m \in \N\). Applying \cite[Theorem~5.5]{kai_du_L_p_bspde_2011} again, we find
  \begin{equation*}
    (u^m,v^m ) \in (  L^2_{\cF^W}([0,T]; W^{3,2}(\R))  \cap S^2_{\cF^W}([0,T];W^{1,2}(\R))) \times L^2_{\cF^W}([0,T];W^{2,2}(\R)).
  \end{equation*}
  The above regularity (\(p(m-2) > 1\) with \(p=2,m=3\)) allows us to apply \cite[Corollary~2.2]{DuTang2013}, which tells us that there exists a set of full measure \(\Omega_m''\) maybe different from \(\Omega'\) such that
  \begin{align*}
    u^m(t,x) = &\,  G(x) +  \int\limits_{t}^{T_1} \frac{\sigma^2_t+\nu^2 }{2} \frac{\dd^2}{\dd x^2} u^m(s,x) - \indicator{(0,\tau_m]}(s) (k*\rho)(s,x) \frac{\dd}{\dd x} u(s,x)  \\
    &+ \nu \frac{\dd}{\dd x} v^m(s,x) \Id s  -  \int\limits_{t}^{T_1} v^m(s,x) \Id W_s
  \end{align*}
  holds for all \((t,x) \in [0,T_1] \times \R \) on \(\Omega_m''\). We use the subscript \(m\) to indicate that even though the set \(\Omega_m''\) is independent of \((t,x)\) it still may depend on \(m \in \N\).

  Besides, to be precise, \cite[Corollary~2.2]{du2018w2psolutions} actually requires \(F_m \in L^2_{\cF^W}([0,T];W^{3,2}(\R))\), which is more regularity than we have. However, one can modify the proof of \cite[Corollary~2.2]{du2018w2psolutions} to obtain the same result with \(F_m \in L^2_{\cF^W}([0,T];W^{1,2}(\R))\). The crucial part is that a mollification of \(F_m\) with the standard mollifier converges in the supremum norm to~\(F_m\), which follows from Morrey's inequality even in our case \( F_m \in L^2_{\cF^W}([0,T];W^{1,2}(\R))\). For a similar result in the SPDE setting we refer to \cite[Lemma~4.1]{RozovskyBorisL2018SES}.
 
  Next, we want to apply an It{\^o}--Wentzell type formula \cite[Theorem~3.1]{Yang2013DynkinGO} (with \(V=u, \; X=\overline{Y}\) therein). Hence, we need to verify the required assumption. First, we can view \(u_m\) as a jointly continuous It{\^o} process in \((t,x)\) by \cite[Corollary~2.2]{DuTang2013} on the set \(\Omega_m''\). We also recall that \(u^m \in L^2_{\cF^W}([0,T];W^{2,2}(\R))\), \(v^m \in L^2_{\cF^W}([0,T];W^{1,2}(\R))\), \(F_m \in L^2_{\cF^W}([0,T];L^2(\R))\) and \(\overline{Y}\) is a strong solution and therefore a continuous semimartingale.

  Moreover, we note that \(\rho,\frac{\dd}{\dd x} u\) are \(\mathcal{P}^W \times \mathcal{B}(\R)\)-measurable and \(\tau_m\) is \(\mathcal{P}^W \)-measurable. Hence, the same holds true for \(F_m\). Also as previously mentioned \((k*\rho_t)\) is bounded in \(x \in \R\) for almost all \((\omega, t ) \in \Omega \times [0,T_1]\) and as we have seen in the proof of Theorem~\ref{theorem: existence_mean_field_SDE} (Step~1) is Lipschitz continuous for almost all \((\omega, t ) \in \Omega \times [0,T_1]\). Thus, all assumptions of \cite[Theorem~3.1]{Yang2013DynkinGO} are fulfilled and we obtain
  \begin{align}\label{eq: ito_ventzel_formula_bspde}
    &u^m_{T_1}(\overline{Y}_{T_1}) \nonumber  \\
    &\quad=  u^m_0 (\overline{Y}_0) + \int\limits_{0}^{T_1} \bigg( \frac{\sigma^2_t+\nu^2}{2} \frac{\dd^2}{\dd x^2} u^m_t - (k*\rho_t) \frac{\dd}{\dd x} u_t + \nu \frac{\dd}{\dd x} v^m_t -  \frac{\sigma^2_t+\nu^2}{2} \frac{\dd^2}{\dd x^2} u^m_t\nonumber \\
    &\quad\quad+ \indicator{(0,\tau_m]}(t) (k*\rho_t) \frac{\dd}{\dd x} u_t  - \nu \frac{\dd}{\dd x} v^m_t \bigg)(\overline{Y}_t) \Id t   \nonumber   \\
    &\quad\quad+  \int\limits_{0}^{T_1} \bigg( v^m_t +  \nu \frac{\dd}{\dd x} u^m_t  \bigg)(\overline{Y}_t) \Id W_t +   \int\limits_{0}^{T_1} \sigma_t(\overline{Y}_t) \frac{\dd}{\dd x} u^m_t (\overline{Y}_t) \Id B_t \nonumber  \\
    &\quad= u^m_0 (\overline{Y}_0) + \int\limits_{0}^{T_1} F(t,\overline{Y}_t)(\indicator{(0,\tau_m]}(t)-1) \Id t
    +\int\limits_{0}^{T_1} \bigg( v^m_t +  \nu \frac{\dd}{\dd x} u^m_t  \bigg)(\overline{Y}_t) \Id W_t \nonumber\\
    &\quad\quad+  \int\limits_{0}^{T_1} \sigma_t(\overline{Y}_t) \frac{\dd}{\dd x} u^m_t (\overline{Y}_t) \Id B_t.
  \end{align}
  With this formula at hand, let us introduce the filtration \(\mathcal{G}_t=\sigma(\sigma(\overline{Y}_t),\cF_t^W)\), \(t \in [0,T]\). Our aim is to take the conditional expectation with respect to \(\mathcal{G}_0\) on both sides of the above equation in order to cancel both the stochastic integrals. We observe that \(\mathcal{G}_t \subset \cF_t\) and the solution \((\overline{Y}_t, t \in [0,T])\) is predictable with respect to the filtration \(\mathcal{G}\). Moreover, \(B^1\) and \(W\) are still per definition Brownian motions under the filtration \((\cF_t, t \in [0,T])\). Hence, both stochastic integrals are martingales with respect to the filtration \((\cF_t , t\in [0,T])\), if we can prove an \(L^2\)-bound on the integrands.

  By Sobolev's embedding or Morrey's inequality and the bound on \(\sigma_t\) we have
  \begin{align*}
    \E \bigg( \int\limits_{0}^{T_1} \bigg| \sigma_t(\overline{Y}_t) \frac{\dd}{\dd x} u^m_t(\overline{Y}_t) \bigg|^2 \Id t  \bigg)
    &\le \Lambda  \E\bigg( \int\limits_0^{T_1}   \norm{\frac{\dd}{\dd x} u^m_t}_{L^\infty(\R)}^2 \Id t \bigg) \\
    &\le \Lambda  \E\bigg( \int\limits_0^{T_1}   \norm{ u^m_t}_{W^{2,2}(\R)}^2 \Id t \bigg) < \infty,
  \end{align*}
  which verifies that the second stochastic integral of \eqref{eq: ito_ventzel_formula_bspde} is a martingale with respect to the filtration~\(\cF\). Hence, we discover
  \begin{align*}
    \E \bigg( \int\limits_{0}^{T_1} \sigma_t(\overline{Y}_t) \frac{\dd}{\dd x} u^m_t (\overline{Y}_t) \Id B_t \,\bigg |\, \mathcal{G}_0 \bigg)
    = \E \bigg(  \E\bigg( \int\limits_{0}^{T_1} \frac{\dd}{\dd x} u^m_t (\overline{Y}_t) \Id B_t \, | \cF_0 \bigg) \,\bigg |\, \mathcal{G}_0 \bigg)
    =0.
  \end{align*}
  Furthermore, we have the estimate
  \begin{align*}
    \E\bigg( \int\limits_0^{T_1} \bigg|v^m_t (\overline{Y}_t) +  \nu \frac{\dd}{\dd x} u^m_t (\overline{Y}_t)\bigg|^2 \Id t \bigg)
    &\le 2 \E\bigg( \int\limits_0^{T_1}    \norm{v^m_t}_{L^\infty(\R)}^2 + \nu^2  \norm{\frac{\dd}{\dd x} u^m_t}_{L^\infty(\R)}^2       \Id t  \bigg)  \\
    &\le C  \E\bigg( \int\limits_0^{T_1}   \norm{v^m_t}_{W^{1,2} (\R)}^2 +  \norm{u^m_t}_{W^{2,2} (\R)}^2    \Id t  \bigg) \\
    &<\infty,
  \end{align*}
  where we used Morrey's inequality in the second step. Hence, the first stochastic integral appearing in \eqref{eq: ito_ventzel_formula_bspde} is also a martingale with respect to the filtration \(\mathcal{G}\) starting at zero. Taking the conditional expectation with respect to \(\mathcal{G}_0\) in \eqref{eq: ito_ventzel_formula_bspde} and having in mind that \(\overline{Y}_0 = Y_0\), we obtain
  \begin{equation*}
    \E( u^m_{T_1}(\overline{Y}_{T_1}) \, | \, \mathcal{G}_0) = u^m_{0}(Y_{0}) +  \E\bigg( \int\limits_{0}^{T_1} F(t,\overline{Y}_t)(\indicator{(0,\tau_m]}(t)-1) \Id t \, \bigg| \, \mathcal{G}_0 \bigg).
  \end{equation*}

  It remains to show that
  \begin{align}\label{eq: convergence_bspde_cond_expectation}
    \lim\limits_{m \to \infty} ( \E( u^m_{T_1}(\overline{Y}_{T_1}) \, | \, \mathcal{G}_0) -u^m_{0}(Y_{0}))
    = \E ( u_{T_1}(\overline{Y}_{T_1}) \, | \, \mathcal{G}_0) -  u_{0}(Y_{0}),  \quad   \P\textnormal{-}\text{a.e.},
  \end{align}
  and
  \begin{equation}\label{eq: vanish_bspde_cond_expectation}
    \lim\limits_{m \to \infty} \E\bigg( \int\limits_{0}^{T_1} F(t,\overline{Y}_t)(\indicator{(0,\tau_m]}(t)-1) \Id t \,\bigg  | \, \mathcal{G}_0 \bigg) = 0, \quad   \P\textnormal{-}\text{a.e.}
  \end{equation}
  We first show \eqref{eq: vanish_bspde_cond_expectation} but we prove the \(L^1\)-convergence, which then implies \eqref{eq: vanish_bspde_cond_expectation} along a subsequence. We compute
  \begin{align*}
    &\E \bigg(\bigg|  \E\bigg( \int\limits_{0}^{T_1} F(t,\overline{Y}_t)(\indicator{(0,\tau_m]}(t)-1) \Id t \, \bigg| \, \mathcal{G}_0  \bigg)   \bigg| \bigg)  \\
    &\quad\le \, \E \bigg( \E\bigg( \bigg|\int\limits_{0}^{T_1} F(t,\overline{Y}_t)(\indicator{(0,\tau_m]}(t)-1) \Id t \bigg| \,\bigg  | \, \mathcal{G}_0  \bigg)  \bigg)  \\
    &\quad\le \,  \E \bigg( \int\limits_{0}^{T_1}| F(t,\overline{Y}_t)(\indicator{(0,\tau_m]}(t)-1)| \Id t \bigg) \\
    &\quad\le \,  \E \bigg( \int\limits_{0}^{T_1} \norm{ F(t,\cdot)}_{L^\infty(\R)} |\indicator{(0,\tau_m]}(t)-1| \Id t \bigg)  \\
    &\quad\le \,  \E \bigg( \int\limits_{0}^{T_1} \norm{ F(t,\cdot)}_{W^{1,2}(\R)} |\indicator{(0,\tau_m]}(t)-1| \Id t \bigg)  \\
    &\quad\le \,  C \E \bigg( \int\limits_{0}^{T_1}\bigg( \norm{ (k*\rho_t) \frac{\dd}{\dd x} u_t}_{L^2(\R)} +\norm{ (k*\rho_t) \frac{\dd^2}{\dd x^2} u_t}_{L^2(\R)}\\
    &\quad\quad+ \norm{ (k* \frac{\dd}{\dd x}\rho_t) \frac{\dd}{\dd x} u_t}_{L^2(\R)} \bigg) |\indicator{(0,\tau_m]}(t)-1| \Id t \bigg)  \\
    &\quad\le \, C(T) \E \bigg( \int\limits_{0}^{T_1} \bigg(2 \norm{\rho_t}_{L^2(\R)} + \norm{ \frac{\dd}{\dd x}\rho_t}_{L^2(\R)}  \bigg) \norm{u_t}_{W^{2,2}(\R)} |\indicator{(0,\tau_m]}(t)-1| \Id t \bigg) \\
    &\quad\le \,  C(T) \E \bigg( \int\limits_{0}^{T_1}  (\norm{\rho_t}_{W^{1,2}(\R)}^2 + \norm{u_t}_{W^{2,2}(\R)}^2 ) |\indicator{(0,\tau_m]}(t)-1| \Id t \bigg)  ,
  \end{align*}
  where we used Morrey's inequality in the fourth step and H{\"o}lder's inequality as well as \eqref{eq: l_infinity_bound_k_zeta} in the sixth step. Finally, \(\rho \in L^2_{\cF^W}([0,T];W^{1,2}(\R)), \; u \in  L^2_{\cF^W}([0,T];W^{2,2}(\R))\), dominated convergence theorem and \(\tau_m \uparrow T_1\) tell us that the last term vanishes for \(m \to \infty\).

  Taking the above subsequence, which we do not rename, we demonstrate \eqref{eq: convergence_bspde_cond_expectation} along a further subsequence by proving \(L^2\)-convergence of~\eqref{eq: convergence_bspde_cond_expectation}. Let us define \(\widetilde{u}^m = u-u^m\) and \(\widetilde{v}^m= v-v^m\), which solve the following BSPDE
  \begin{equation*}
    \Id \widetilde{u}^m_t = - \bigg( \frac{\sigma^2_t+\nu^2}{2} \frac{\dd^2}{\dd x^2} \widetilde{u}^m_t - \widetilde{F}_m  + \nu \frac{\dd}{\dd x} \widetilde{v}^m_t \bigg)  \Id t   + \widetilde{v}^m_t \Id W_t
  \end{equation*}
  with terminal condition \(\widetilde{G}=0\), free term
  \begin{equation*}
    \widetilde{F}_m(t.x) = (k*\rho)(t,x) \frac{\dd}{\dd x}u(t,x) (1-\indicator{(0,\tau_m]}(t)) = F(t,x)(1-\indicator{(0,\tau_m]}(t))
  \end{equation*}
  and \(\widetilde{F}_m \in  L^2_{\cF^W}([0,T];L^{2}(\R))\). Hence, by \cite[Theorem~5.5]{kai_du_L_p_bspde_2011} the solution of the BSPDE is unique and by \cite[Proposition~3.2 and Proposition~4.1, Step~1]{DU2BSPDE2010} satisfies the estimate
  \begin{equation}\label{eq: utilde_bspde_L2_estimate}
    \E\bigg( \sup\limits_{t \le T_1} \norm{\widetilde{u}^m_{t}}_{W^{1,2}(\R)}^2 \bigg)
    \le C(T) \norm{\widetilde{F}_m}_{L^{2}_{\cF^W}([0,T] ;L^{2}(\R))}.
  \end{equation}
  Consequently, using Jensen inequality, the \(\mathcal{G}_0\)-measurability of \(u_0(Y_0)\), Morrey's inequality and \eqref{eq: utilde_bspde_L2_estimate} we find
  \begin{align*}
    &\E (| \E( u^m_{T_1}(\overline{Y}_{T_1}) \, | \, \mathcal{G}_0) -u^m_0(Y_0)) - ( \E ( u_{T_1}(\overline{Y}_{T_1}) \, | \, \mathcal{G}_0) -u_0(Y_0) ) |^2 ) \\
    &\quad\le 2 \E ( |\widetilde{u}^m_{T_1}(\overline{Y}_{T_1})|^2 +| \widetilde{u}^m_0 (Y_0) |^2 ) \\
    & \quad\le 2 \E ( \norm{\widetilde{u}^m_{T_1}}_{L^\infty(\R)}^2 +  \norm{\widetilde{u}^m_{0}}_{L^\infty(\R)}^2 ) \\
    & \quad \le 2 \E( \norm{\widetilde{u}^m_{T_1}}_{W^{1,2}(\R)}^2 + \norm{\widetilde{u}^m_{0}}_{W^{1,2}(\R)}^2 )  \\
    &\quad\le 4 \E\bigg( \sup\limits_{t \le T_1} \norm{\widetilde{u}^m_{t}}_{W^{1,2}(\R)}^2 \bigg) \\
    &\quad\le C(T)  \E \bigg( \int\limits_{0}^{T_1}  \norm{(\widetilde{F}_m)_t}_{L^{2}(\R)}^2  \Id t\bigg) \\
    &\quad\le C(T)  \E\bigg( \int\limits_{0}^{T_1} |1-\indicator{(0,\tau_m]}(t)|^2
    \norm{ F(t,x)}_{L^{2}(\R)}^2  \Id t \bigg) .
  \end{align*}
  But \(F \in L^2_{\cF^W}([0,T] ;L^2(\R))\) and, therefore, an application of the dominated convergence theorem proves \eqref{eq: convergence_bspde_cond_expectation} along a subsequence. As a result, the last inequality together with \eqref{eq: vanish_bspde_cond_expectation} implies  \eqref{eq: bspde_charc_conditional_expectation} for all \(\omega \in \widetilde{\Omega}:= \bigcap\limits_{m \in \N} \Omega_m'' \cap  \Omega' \). Hence, the lemma is proven.
\end{proof}

As an application of  Theorem~\eqref{theorem: existence_mean_field_SDE}, we obtain a solution for the  non-regularized and the regularized mean-field SDEs~\eqref{eq: mean_field_trajectories_with_common_noise} and ~\eqref{eq: regularized_mean_field_trajectories_with_common_noise}, respectively.

\begin{corollary}\label{cor: solution to regularized mean-field SDE}
  Let \(0 \le \rho_0 \in L^1(\R) \cap W^{2,2}(\R) \) with \(\norm{\rho_0}_{L^1(\R)}=1\). Suppose that for \(T>0\) there exist unique solutions~$\rho$ and $\rho^{\tau}$ in \(L^2_{\cF^W}([0,T];W^{1,2}(\R))\) of the SPDEs~\eqref{eq: hk_spde} and~\eqref{eq: regularised_hk_spde}, respectively, on the interval \([0,T]\) with
  \begin{align*}
    &\norm{\rho}_{L^2_{\cF^w}([0,T];W^{1,2}(\R))} + \norm{\rho}_{S^\infty_{\cF^w}([0,T];L^1(\R) \cap L^2(\R))} \le C \quad \text{and}\\
    &\norm{\rho^{\tau}}_{L^2_{\cF^w}([0,T];W^{1,2}(\R))} + \norm{\rho^{\tau}}_{S^\infty_{\cF^w}([0,T];L^1(\R) \cap L^2(\R))} \le C,
  \end{align*}
  for some finite constant \(C>0\). Moreover, let the diffusion coefficient \(\sigma \colon [0,T] \times \R \to \R \) satisfy Assumption~\ref{ass: sigma_ellipticity+bound}. Then, for \(\tau >0\) there exists unique solution $(Y^{i}, t\in [0,T])$ and $(Y^{i,\tau}, t\in [0,T])$ for the mean-field SDEs~\eqref{eq: mean_field_trajectories_with_common_noise} and~\eqref{eq: regularized_mean_field_trajectories_with_common_noise}, respectively. Moreover, \(\rho_t\) is the conditional density of \(Y_t^{i}\) given \(\cF_t^W\) and \(\rho^\tau_t\) is the conditional density of \(Y_t^{i,\tau}\) given \(\cF_t^W\), for every $t\in [0,T]$ and for all \(i \in \N\).
\end{corollary}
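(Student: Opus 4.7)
The plan is to derive this corollary as a direct specialization of Theorem~\ref{theorem: existence_mean_field_SDE}, applied componentwise for each index $i \in \N$ and separately for the two choices of kernel $k = k_{\scriptscriptstyle{HK}}$ and $k = k_{\scriptscriptstyle{HK}}^\tau$. The only hypothesis of that theorem not already included in the corollary is the integrability requirement $k \in L^1(\R) \cap L^2(\R)$, so I would begin by verifying this for both kernels. Since $k_{\scriptscriptstyle{HK}}(x) = \indicator{[-R,R]}(x)\,x$ is bounded by $R$ and supported on $[-R,R]$, both norms are trivially finite; since $k_{\scriptscriptstyle{HK}}^\tau \in \testfunctions{\R}$ is smooth with compact support inside $[-R-2\tau, R+2\tau]$, the same holds for $k_{\scriptscriptstyle{HK}}^\tau$.

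Next, I would check that Assumption~\ref{ass: spde_existence} is in force in each case. The assumptions of the corollary postulate precisely the existence and uniform bounds of $\rho$ (for the kernel $k_{\scriptscriptstyle{HK}}$) and $\rho^\tau$ (for the kernel $k_{\scriptscriptstyle{HK}}^\tau$) in $L^2_{\cF^W}([0,T];W^{1,2}(\R)) \cap S^\infty_{\cF^W}([0,T];L^1(\R) \cap L^2(\R))$ needed to invoke Theorem~\ref{theorem: existence_mean_field_SDE}. With this in hand, and with Assumption~\ref{ass: sigma_ellipticity+bound} on the diffusion coefficient, the theorem applied with driving data $(B^i, W, \zeta_i)$ yields, for each $i \in \N$, a unique strong solution $(Y_t^i, t \in [0,T])$ of \eqref{eq: mean_field_trajectories_with_common_noise} (respectively $(Y_t^{i,\tau}, t \in [0,T])$ of \eqref{eq: regularized_mean_field_trajectories_with_common_noise}) and identifies $\rho_t$ (respectively $\rho_t^\tau$) as the conditional density of $Y_t^i$ (respectively $Y_t^{i,\tau}$) given $\cF_t^W$.

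Finally, one short remark would explain why $\rho_t$ carries no superscript $i$: in the frozen-drift SDE~\eqref{eq: linearised_mean_field_sde_with_enviromental_noise} used in Step~1 of the proof of Theorem~\ref{theorem: existence_mean_field_SDE}, the drift $(k_{\scriptscriptstyle{HK}} * \rho_t)$ and the diffusion $\sigma$ are deterministic functionals of $(\omega,t,x)$ depending only on $\cF^W$, while $(B^i)_{i \in \N}$ are i.i.d.\ Brownian motions independent of $W$ and $(\zeta_i)_{i \in \N}$ are i.i.d.\ and independent of all Brownians; hence the strong solutions $Y^i$ are conditionally i.i.d.\ given $\cF^W$, so their common conditional density is the single process $\rho$ already constructed. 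I do not anticipate a genuine obstacle in this proof: the entire technical content, including the duality argument via the BSPDE~\eqref{eq: bspde_hk_model} and the It{\^o}--Wentzell step, has been carried out once and for all in Theorem~\ref{theorem: existence_mean_field_SDE} and Lemma~\ref{lemma: dual_BSPDE}; the corollary is a bookkeeping statement that packages these results for the two kernels relevant to the Hegselmann--Krause model.
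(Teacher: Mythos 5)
Your proposal is correct and matches the paper's approach: the paper states this corollary as a direct application of Theorem~\ref{theorem: existence_mean_field_SDE} to the two kernels $k_{\scriptscriptstyle{HK}}$ and $k_{\scriptscriptstyle{HK}}^\tau$, with the corollary's hypotheses supplying exactly Assumption~\ref{ass: spde_existence} in each case, and your verification that both kernels lie in $L^1(\R)\cap L^2(\R)$ together with the remark on conditional i.i.d.\ structure across $i$ is precisely the bookkeeping the paper leaves implicit.
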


%\begin{remark}
%  Theorem~\ref{theorem: existence_mean_field_SDE} and Lemma~\ref{lemma: dual_BSPDE} can be extend to cover general interaction force \(k \in L^1(\R) \cap L^2(\R)\) with the exact same estimates.
%\end{remark}

\section{Mean-field limits of the interacting particle systems}\label{sec:mean field limit}

In this section we establish propagation of chaos for the regularized Hegselsmann--Krause models (in particular we recall \(k_{\scriptscriptstyle{HK}}^\tau(x)\) is the approximation of \(k_{\scriptscriptstyle{HK}}(x) =  \indicator{[-R,R]}(x) x \)) with environmental noise~\eqref{eq: regularized_particle_system_with_common_noise} towards the (non-regularized) mean-field stochastic differential equations~\eqref{eq: mean_field_trajectories_with_common_noise} resp. the (non-regularized) stochastic Fokker--Planck equation~\eqref{eq: hk_spde}, presenting the density based model of the opinion dynamics, see Theorem~\ref{theorem: propagation_of_chaos_with_noise}. To prove propagation of chaos, we first derive estimates of the difference of the regularized interacting particle system~\eqref{eq: regularized_particle_system_with_common_noise} and the regularized mean-field SDE~\eqref{eq: regularized_mean_field_trajectories_with_common_noise} (see Proposition~\ref{prop: l_2_estimate_real_approx_trajectory_mean_field_trajectory}) as well as of the difference of the solutions to the regularized stochastic Fokker--Planck equation~\eqref{eq: regularised_hk_spde} and of the non-regularized stochastic Fokker--Planck equation~\eqref{eq: hk_spde} (see Proposition~\ref{prop: difference of regularized mean-field SDE and the mean-field SDE}). As preparation, we need the following auxiliary lemma.

\begin{lemma}\label{lemma: explicit_conditional_expectation}
  Let \((\Omega, \mathcal{F}, \P)\) be a probability space, \(\mathcal{G} \subseteq \mathcal{F}\) a sub-\(\sigma\)-algebra and \(X,Y\) conditionally independent random variables with values in \(\R\) given \(\mathcal{G}\). Moreover, let \(X\) have a conditional density \(f \colon \Omega \times \R \to \R\) such that \(f\) is \(\mathcal{G} \otimes \mathcal{B}(\R) / \mathcal{B}(\R)  \) measurable and in \(L^1(\Omega \times \R)\). Then, for every bounded measurable function \(h\colon \R \times \R \to \R\), we have
  \begin{equation}\label{eq: explicit_representation_cond_expectation}
    \E( h(X,Y) \,| \, \sigma(\mathcal{G}, \sigma(Y) ))(\omega) = \int_{\R} h(z,Y(\omega)) f(\omega,z) \Id z,
    \quad \omega \in \Omega^\prime,
  \end{equation}
  on a set $\Omega^\prime\subset \Omega$ of full probability.
\end{lemma}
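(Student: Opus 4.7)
The plan is to establish the identity first for product functions $h(x,y)=g(x)k(y)$ with bounded measurable $g,k\colon\R\to\R$, and then extend to all bounded measurable $h$ via a monotone class argument.

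First I would verify that
\begin{equation*}
  \E(g(X)\,|\,\mathcal{G})(\omega)=\int_{\R}g(z)f(\omega,z)\,\mathrm{d}z,\quad\P\text{-a.s.},
\end{equation*}
directly from the defining property of the conditional density: for every $G\in\mathcal{G}$, $\E(g(X)\mathbf{1}_G)=\E\bigl(\mathbf{1}_G\int g(z)f(\cdot,z)\,\mathrm{d}z\bigr)$. This is standard starting from $g=\mathbf{1}_A$ with $A\in\mathcal{B}(\R)$ and extending by linearity and monotone convergence, with Fubini's theorem (and the assumed joint $\mathcal{G}\otimes\mathcal{B}(\R)$-measurability of $f$) ensuring that the right-hand side is a version of the conditional expectation. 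Next, the conditional independence of $X$ and $Y$ given $\mathcal{G}$ yields
\begin{equation*}
  \E(g(X)\,|\,\sigma(\mathcal{G},\sigma(Y)))=\E(g(X)\,|\,\mathcal{G})\quad\P\text{-a.s.},
\end{equation*}
and pulling out the $\sigma(\mathcal{G},\sigma(Y))$-measurable factor $k(Y)$ gives
\begin{equation*}
  \E(g(X)k(Y)\,|\,\sigma(\mathcal{G},\sigma(Y)))(\omega)=k(Y(\omega))\int_{\R}g(z)f(\omega,z)\,\mathrm{d}z=\int_{\R}g(z)k(Y(\omega))f(\omega,z)\,\mathrm{d}z
\end{equation*}
on a set of full probability, which is \eqref{eq: explicit_representation_cond_expectation} for product integrands.

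For the extension, I would let $\mathcal{H}$ denote the class of bounded $\mathcal{B}(\R^2)$-measurable $h$ for which \eqref{eq: explicit_representation_cond_expectation} holds $\P$-a.s. By the preceding step $\mathcal{H}$ contains all products $g(x)k(y)$, hence all indicator functions of measurable rectangles; $\mathcal{H}$ is a vector space containing the constants, and it is closed under uniformly bounded monotone convergence, since on the left one invokes conditional dominated convergence and on the right ordinary dominated convergence (with the integrable dominant $\|h\|_\infty f(\omega,\cdot)\in L^1(\R)$ for $\P$-a.e.\ $\omega$). The functional monotone class theorem then forces $\mathcal{H}$ to contain every bounded $\mathcal{B}(\R^2)$-measurable function, yielding the claim.

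The only subtlety is the bookkeeping of null sets together with the measurability of $\omega\mapsto\int h(z,Y(\omega))f(\omega,z)\,\mathrm{d}z$ with respect to $\sigma(\mathcal{G},\sigma(Y))$; both follow routinely from the assumed joint measurability of $f$ and Fubini's theorem, so no genuine analytical obstacle arises.
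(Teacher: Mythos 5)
Your proof is correct and follows essentially the same route as the paper: reduce to product (rectangle) integrands, use the conditional independence of $X$ and $Y$ given $\mathcal{G}$ as the key ingredient, and extend by a standard monotone class argument. The only cosmetic difference is that you deploy conditional independence through the characterization $\E(g(X)\,|\,\sigma(\mathcal{G},\sigma(Y)))=\E(g(X)\,|\,\mathcal{G})$ together with the pull-out property, whereas the paper verifies the defining property of the conditional expectation directly against the generating $\pi$-system $\{C\cap Y^{-1}(B'') : C\in\mathcal{G},\ B''\in\mathcal{B}(\R)\}$; both are standard and equivalent.
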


\begin{proof}
  First, we notice that by Fubini's theorem the right-hand side of \eqref{eq: explicit_representation_cond_expectation} is \(\sigma(\mathcal{G},\sigma(Y))\)-measurable. By the standard Lebesgue integral approximation technique we may assume \(h= \indicator{B \times B'}(x,y)\) for some measurable sets \(B, B' \in \mathcal{B}(\R)\) in order to prove \eqref{eq: explicit_representation_cond_expectation}. Hence, we need to show
  \begin{equation*}
    \E(\indicator{A} \indicator{B \times B'}(X,Y)) = \E \bigg( \indicator{A} \int_{\R} \indicator{B \times B'}(z,Y(\omega)) f(\omega,z) \Id z  \bigg)
  \end{equation*}
  for all \(A \in \sigma(\mathcal{G},\sigma(Y))\). Now, we reduce the problem again to \(A = C \cap C''\) with \(C \in \mathcal{G} \) and \(C'' = Y^{-1}(B'')\) for some \(B'' \in \mathcal{B}(\R)\). Consequently, using the conditional independence we find
  \begin{align*}
    \E(\indicator{C \cap C''} \indicator{B \times B'}(X,Y))
    &= \E(\indicator{C} \E(\indicator{C''} \indicator{B \times B'}(X,Y) \, | \, \mathcal{G} ) ) \\
    &= \E(\indicator{C} \E(\indicator{B'' \cap B'}(Y)  \indicator{B}(X) \, | \, \mathcal{G} ) ) \\
    &= \E(\indicator{C} \E(\indicator{B'' \cap B'}(Y) \, | \, \mathcal{G} ) \E(  \indicator{B}(X) \, | \, \mathcal{G} ) ) \\
    &= \E(\indicator{C} \indicator{B'' \cap B'}(Y) \E(  \indicator{B}(X) \, | \, \mathcal{G} ) ) \\
    &= \E \bigg(\indicator{C \cap C''}(\omega) \indicator{B'}(Y(\omega)) \int_{\R} \indicator{B}(z) f(\omega,z) \Id z   \bigg)\\
    &= \E \bigg(\indicator{C \cap C''}(\omega) \int_{\R} \indicator{B \times B'}(z,Y(\omega)) f(\omega,z) \Id z \bigg)
  \end{align*}
  and the lemma is proven.
\end{proof}

The next proposition provides an estimate of the difference of the regularized particle system and the regularized mean-field SDE.

\begin{proposition}\label{prop: l_2_estimate_real_approx_trajectory_mean_field_trajectory}
  Suppose Assumption~\ref{ass: sigma_ellipticity+bound} and Assumption~\ref{ass: spde_existence} hold. For each \(N \in \N\), let \(((Y_t^{i,\tau}, t \in [0,T]), i=1,\ldots,N)\) be the solutions to the regularized mean-field SDEs~\eqref{eq: regularized_mean_field_trajectories_with_common_noise}, as provided by Corollary~\ref{cor: solution to regularized mean-field SDE}, and let \(((X_t^{i,\tau}, t \in [0,T]), i=1,\ldots,N)\) be the solution to regularized interaction particle system~\eqref{eq: regularized_particle_system_with_common_noise}. Then, for any \(\tau > 0\) and \(N\in \N\) we have
  \begin{equation*}
    \sup\limits_{t \in [0,T]} \sup\limits_{i=1,\ldots,N}  \E(|X_t^{i,\tau} - Y_t^{i,\tau} |^2 )
    \le  \frac{ 2 \norm{k_{\scriptscriptstyle{HK}}}_{L^2(\R)}^2 T}{(N-1) \tau} \exp\bigg( \frac{(C+\Lambda) T }{\tau} \bigg),
  \end{equation*}
  where \(C\) is some finite generic constant.  
\end{proposition}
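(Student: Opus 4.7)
The strategy is to apply It\^o's formula to $|X_t^{i,\tau} - Y_t^{i,\tau}|^2$, exploiting the crucial cancellation of the common noise $\nu\Id W_t$, which enters identically in both~\eqref{eq: regularized_particle_system_with_common_noise} and~\eqref{eq: regularized_mean_field_trajectories_with_common_noise}; this is precisely what makes a classical It\^o-Gr\"onwall approach viable despite the presence of $W$. After taking expectation to eliminate the remaining $B^i$-martingale, the diffusion contribution is handled by the Lipschitz bound on $\sigma$ from Assumption~\ref{ass: sigma_ellipticity+bound}. The heart of the matter is bounding the drift difference, which I would split by adding and subtracting $\frac{1}{N-1}\sum_{j\neq i}k_{\scriptscriptstyle{HK}}^\tau(Y_t^{i,\tau}-Y_t^{j,\tau})$ into a Lipschitz part
\begin{equation*}
  A_t^i := \frac{1}{N-1}\sum_{j\neq i}\bigl[k_{\scriptscriptstyle{HK}}^\tau(X_t^{i,\tau}-X_t^{j,\tau}) - k_{\scriptscriptstyle{HK}}^\tau(Y_t^{i,\tau}-Y_t^{j,\tau})\bigr]
\end{equation*}
and a \emph{sampling} part $B_t^i := \frac{1}{N-1}\sum_{j\neq i}k_{\scriptscriptstyle{HK}}^\tau(Y_t^{i,\tau}-Y_t^{j,\tau}) - (k_{\scriptscriptstyle{HK}}^\tau*\rho_t^\tau)(Y_t^{i,\tau})$.

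The Lipschitz part is controlled by the assumed derivative bound $|\frac{\d}{\d x}k_{\scriptscriptstyle{HK}}^\tau|\le C/\tau$ together with the exchangeability of the family $(X_t^{j,\tau}-Y_t^{j,\tau})_{j}$: Cauchy--Schwarz plus the fact that $\E(|X_t^{j,\tau}-Y_t^{j,\tau}|^2)$ does not depend on $j$ yields $\E(|(X_t^{i,\tau}-Y_t^{i,\tau})A_t^i|)\lesssim (C/\tau)\,\E(|X_t^{i,\tau}-Y_t^{i,\tau}|^2)$, which together with $\sigma$ produces the Gr\"onwall rate $(C+\Lambda)/\tau$. The core step is the sampling term $B_t^i$. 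Here I would apply Lemma~\ref{lemma: explicit_conditional_expectation} with $\mathcal{G}=\cF_t^W$: since the mean-field particles $(Y_t^{j,\tau})_{j}$ are, conditional on $\cF_t^W$, i.i.d.~with common conditional density $\rho_t^\tau$ (by Corollary~\ref{cor: solution to regularized mean-field SDE}), the lemma identifies
\begin{equation*}
  \E\bigl(k_{\scriptscriptstyle{HK}}^\tau(Y_t^{i,\tau}-Y_t^{j,\tau}) \,\big|\, \sigma(\cF_t^W,\sigma(Y_t^{i,\tau}))\bigr) = (k_{\scriptscriptstyle{HK}}^\tau*\rho_t^\tau)(Y_t^{i,\tau}),\qquad j\neq i.
\end{equation*}
Hence the centred summands $\xi_j^t := k_{\scriptscriptstyle{HK}}^\tau(Y_t^{i,\tau}-Y_t^{j,\tau}) - (k_{\scriptscriptstyle{HK}}^\tau*\rho_t^\tau)(Y_t^{i,\tau})$ are, conditional on $(\cF_t^W,Y_t^{i,\tau})$, independent with zero mean, so all cross terms in the double sum vanish in expectation and a law-of-large-numbers type computation delivers
\begin{equation*}
  \E(|B_t^i|^2) \le \frac{1}{N-1}\,\E(|\xi_j^t|^2) \lesssim \frac{\norm{k_{\scriptscriptstyle{HK}}}_{L^2(\R)}^2}{(N-1)\,\tau},
\end{equation*}
where the final inequality is the key kernel estimate coming from the listed properties of $k_{\scriptscriptstyle{HK}}^\tau$ (combined with mass conservation $\norm{\rho_t^\tau}_{L^1(\R)}=1$).

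Putting the pieces together and writing $e_t := \E(|X_t^{i,\tau}-Y_t^{i,\tau}|^2)$ (independent of $i$ by exchangeability, with $e_0=0$), one arrives at an integral inequality of the form
\begin{equation*}
  e_t \;\le\; \frac{C+\Lambda}{\tau}\int_0^t e_s\,\d s \;+\; \frac{\norm{k_{\scriptscriptstyle{HK}}}_{L^2(\R)}^2}{(N-1)\,\tau}\cdot t,
\end{equation*}
from which the stated bound follows by Gr\"onwall's inequality in integral form (giving a factor $t\,\mathrm{e}^{(C+\Lambda)t/\tau}$ in front of the inhomogeneous term). The main obstacle I anticipate is the rigorous invocation of Lemma~\ref{lemma: explicit_conditional_expectation}: one has to verify its measurability hypotheses, using the regularity of $\rho^\tau$ furnished by Lemma~\ref{lem: regularity_general_SPDE}, and one must ensure that the conditional independence of the $(Y_t^{j,\tau})_{j}$ given $\cF_t^W$ transfers to conditional independence of the $\xi_j^t$ given the enlarged $\sigma$-algebra $\sigma(\cF_t^W,\sigma(Y_t^{i,\tau}))$. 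This is exactly the step that justifies dropping the cross terms and is what turns the sampling error into a $1/(N-1)$ quantity.
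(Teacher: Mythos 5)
Your proposal follows essentially the same route as the paper's proof: the identical splitting of the drift into a Lipschitz part (controlled by $|\frac{\d}{\d x}k_{\scriptscriptstyle{HK}}^\tau|\le C/\tau$) and a centred sampling part, the use of Lemma~\ref{lemma: explicit_conditional_expectation} with $\mathcal{G}=\cF_t^W$ to kill the cross terms via conditional independence given $\sigma(\cF_t^W,\sigma(Y_t^{i,\tau}))$, the $L^\infty$-bound on $k_{\scriptscriptstyle{HK}}^\tau$ to get the $1/((N-1)\tau)$ variance estimate, and Gr\"onwall to conclude. The argument is correct and matches the paper's in all essential steps.
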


\begin{proof}
  Applying It{\^o}'s formula, we find
  \begin{align}\label{eq: regularised_particle_regularised_mean_field_l_2_norm}
    &|X_t^{i,\tau} - Y_t^{i,\tau} |^2 \nonumber\\
    &\quad=  \; 2 \int\limits_0^t (X_s^{i,\tau} - Y_s^{i,\tau} ) \bigg( \frac{1}{N-1} \sum\limits_{\substack{j=1 \\j \neq i }}^N -k_{\scriptscriptstyle{HK}}^\tau(X_s^{i,\tau} -X_s^{j,\tau})  +(k_{\scriptscriptstyle{HK}}^\tau*\rho_s^\tau) (Y_s^{i,\tau}) \bigg)  \Id s \nonumber \\
    &\quad\quad+ 2 \int\limits_0^t (X_s^{i,\tau} - Y_s^{i,\tau} ) ( \sigma(s,X_s^{i,\tau}) - \sigma(s,Y_s^{i,\tau})) \Id B_s^{i}  + \int\limits_0^t ( \sigma(s,X_s^{i,\tau}) - \sigma(s,Y_s^{i,\tau}))^2 \Id s .
  \end{align}
  Splitting the sum we have
  \begin{align*} 
    &\frac{1}{N-1} \sum\limits_{\substack{j=1 \\j \neq i }}^N -k_{\scriptscriptstyle{HK}}^\tau(X_s^{i,\tau} -X_s^{j,\tau})  + (k_{\scriptscriptstyle{HK}}^\tau*\rho_s^\tau) (Y_s^{i,\tau}) \\
    &\quad= \,  \frac{1}{N-1} \sum\limits_{\substack{j=1 \\j \neq i }}^N (k_{\scriptscriptstyle{HK}}^\tau*\rho_s^\tau) (Y_s^{i,\tau})  - k_{\scriptscriptstyle{HK}}^\tau(Y_s^{i,\tau}-Y_s^{j,\tau})  \\
    &\quad\quad+ \frac{1}{N-1} \sum\limits_{\substack{j=1 \\j \neq i }}^N k_{\scriptscriptstyle{HK}}^\tau(Y_s^{i,\tau}-Y_s^{j,\tau}) -k_{\scriptscriptstyle{HK}}^\tau(X_s^{i,\tau} -X_s^{j,\tau}) \\
    &\quad= \, I_1^s + I_2^s .
  \end{align*}
  For \(I_2^s \), we use the property of our approximation sequence to discover
  \begin{align*}
    |I^s_2|
    &\le  \frac{1}{N-1} \sum\limits_{\substack{j=1 \\j \neq i }}^N |k_{\scriptscriptstyle{HK}}^\tau(Y_s^{i,\tau}-Y_s^{j,\tau}) -k_{\scriptscriptstyle{HK}}^\tau(X_s^{i,\tau} -X_s^{j,\tau})|\\ &\le \frac{C}{(N-1)\tau} \sum\limits_{\substack{j=1}}^N  |X_s^{j,\tau} - Y_s^{j,\tau}| +  |X_s^{i,\tau} - Y_s^{i,\tau}|
  \end{align*}
  and consequently
  \begin{align*}
    \E \bigg( \bigg| 2 \int\limits_0^t (X_s^{i,\tau} - Y_s^{i,\tau} ) I_2^s \Id s \bigg| \bigg)
    &\le \frac{C}{(N-1)\tau} \int\limits_0^t   \sum\limits_{\substack{j=1}}^N  \E( |X_s^{j,\tau} - Y_s^{j,\tau} |^2 +  |X_s^{i,\tau} - Y_s^{i,\tau} |^2 )  \Id s \\
    &\le \frac{C}{\tau} \int\limits_0^t  \sup\limits_{i=1,\ldots,N}  \E( |X_s^{i,\tau} - Y_s^{i,\tau} |^2  )  \Id s  ,
  \end{align*}
  where we used Young's inequality. Next, let us rewrite \(I_1^s\) such that
  \begin{align*}
    I_1^s  = \frac{1}{N-1} \sum\limits_{\substack{j=1 \\j \neq i }}^N (k_{\scriptscriptstyle{HK}}^\tau*\rho_s^\tau) (Y_s^{i,\tau})  - k_{\scriptscriptstyle{HK}}^\tau(Y_s^{i,\tau}-Y_s^{j,\tau}) = \frac{1}{N-1} \sum\limits_{\substack{j=1 \\j \neq i }}^N Z_{i,j}^s
  \end{align*}
  with
  \begin{equation*}
    Z_{i,j}^s = (k_{\scriptscriptstyle{HK}}^\tau*\rho_s^\tau) (Y_s^{i,\tau})  - k_{\scriptscriptstyle{HK}}^\tau(Y_s^{i,\tau}-Y_s^{j,\tau})
  \end{equation*}
  for \( i \neq j \). Furthermore,
  \begin{align*}
    \E(|I_1^s|^2)
    &= \frac{1}{(N-1)^2} \E\bigg( \E \bigg( \sum\limits_{\substack{j=1 \\j \neq i }}^N Z_{i,j}^s \sum\limits_{\substack{k=1 \\j \neq i }}^N Z_{i,k}^s  \, \bigg| \, \sigma( \cF_s^W,  \sigma(Y_s^{i,\tau} )) \bigg) \bigg)  \\
    &= \frac{1}{(N-1)^2} \sum\limits_{\substack{j=1 \\j \neq i }}^N \sum\limits_{\substack{k=1 \\j \neq i }}^N \E( \E(Z_{i,j}^s Z_{i,k}^s \, | \, \sigma( \cF_s^W, \sigma(Y_s^{i,\tau} ) ) ) .
  \end{align*}
  It easy to verify that \((Y_s^{i,\tau},i=1,\ldots,N)\) are conditionally independent given \(\cF_s^W\) and by Theorem~\ref{theorem: existence_mean_field_SDE} have conditional density \(\rho_s\). Hence, we apply Lemma~\ref{lemma: explicit_conditional_expectation} to obtain
  \begin{equation*}
    \E(k_{\scriptscriptstyle{HK}}^\tau(Y_s^{i,\tau}-Y_s^{j,\tau}) \, | \, \sigma( \cF_s^W , \sigma( Y_s^{i},\tau)  ) ) = (k_{\scriptscriptstyle{HK}}^\tau*\rho_s^\tau) (Y_s^{i,\tau})
  \end{equation*}
  and therefore \(\E(Z_{i,j}^s \, | \, \sigma(\cF_s^W , \sigma( Y_s^{i,\tau}))) =0\) since \((k_{\scriptscriptstyle{HK}}^\tau*\rho_s^\tau) (Y_s^{i,\tau})\) is \(\sigma(\cF_s^W,  \sigma(Y_s^{i,\tau}))\)-measurable. Consequently, for the cross terms \(j \neq k \) one can verify that
  \begin{equation*}
    \E(Z_{i,j}^s Z_{i,k}^s \, | \, \sigma( \cF_s^W, \sigma( Y_s^{,\tau} ))  ) = \E(Z_{i,j}^s \, | \, \sigma( \cF_s^W, \sigma( Y_s^{,\tau} ))   ) \E(Z_{i,k}^s \, | \, \sigma( \cF_s^W, \sigma( Y_s^{,\tau} ))  )  =0
  \end{equation*}
  by the previous findings. Hence, we have
  \begin{equation*}
    \E(|I_1^s|^2)
    = \frac{1}{(N-1)^2} \sum\limits_{\substack{j=1 \\j \neq i }}^N \E(|Z_{i,j}^s|^2 )
  \end{equation*}
  and using the boundedness of \(k^\tau\), the structure of our approximation and mass conservation, we obtain
  \begin{equation*}
    \E(|Z_{i,j}^s|^2 )
    = \E ( | (k_{\scriptscriptstyle{HK}}^\tau*\rho_s^\tau) (Y_s^{i,\tau})
    - k_{\scriptscriptstyle{HK}}^\tau(Y_s^{i,\tau}-Y_s^{j,\tau})|^2)
    \le 2 \norm{k_{\scriptscriptstyle{HK}}^\tau}_{L^\infty(\R)}^2
    \le \frac{2}{\tau} \norm{k_{\scriptscriptstyle{HK}}}_{L^2(\R)}^2.
  \end{equation*}
  Combining all the above facts, we get
  \begin{equation*}
    \E(|I_1^s|^2) \le \frac{ 2 \norm{k_{\scriptscriptstyle{HK}}}_{L^2(\R)}^2}{(N-1)\tau}
  \end{equation*}
  and
  \begin{align*}
    \E\bigg( 2 \int\limits_0^t (X_t^{i,\tau} - Y_t^{i,\tau} ) I_1^s \Id s \bigg)
    &\le  \E \bigg(\int\limits_0^t |X_t^{i,\tau} - Y_t^{i,\tau} |^2 \Id s  + \int\limits_0^t | I_1^s|^2 \Id s \bigg) \\
    &\le  \int\limits_0^t \E(|X_t^{i,\tau} - Y_t^{i,\tau} |^2 ) \Id s + \frac{ 2 \norm{k_{\scriptscriptstyle{HK}}}_{L^2(\R)}^2T}{(N-1)\tau}.
  \end{align*}
  Moreover, using the Lipschitz continuity of our coefficients \(\sigma\) we obtain
  \begin{equation*}
    \E \Bigg( \int\limits_0^t ( \sigma(s,X_s^{i,\tau}) - \sigma(s,Y_s^{i,\tau}))^2  \Id s \Bigg)
    \le \Lambda  \int\limits_0^t \E \big(  |X_s^{i,\tau}-Y_s^{i,\tau}|^2 \big)
    \le \Lambda \int\limits_0^t \sup\limits_{i=1,\ldots , N}  \E \big(  |X_s^{i,\tau}-Y_s^{i,\tau}|^2 \big).
  \end{equation*}
  Now, combining this with the estimate of \(I^s_2\), as well as the fact that the stochastic integral in equation ~\eqref{eq: regularised_particle_regularised_mean_field_l_2_norm} is a martingale (Assumption~\ref{ass: sigma_ellipticity+bound}), we obtain
  \begin{align*}
    \sup\limits_{i=1,\ldots,N} \E(|X_t^{i,\tau} - Y_t^{i,\tau} |^2 )
    \le & \; \frac{C}{\tau} \int\limits_0^t \sup\limits_{i=1,\ldots,N} \E (  |X_t^{i,\tau} - Y_t^{i,\tau} |^2 ) \Id s   \\
    &+ \Lambda  \int\limits_0^t \sup\limits_{i=1,\ldots,N} \E(|X_t^{i,\tau} - Y_t^{i,\tau} |^2 ) \Id s +  \frac{ 2 \norm{k_{\scriptscriptstyle{HK}}}_{L^2(\R)}^2 T}{(N-1)\tau} \\
    \le & \; \frac{C+ \Lambda}{\tau} \int\limits_0^t \sup\limits_{i=1,\ldots,N} \E( |X_t^{i,\tau} - Y_t^{i,\tau} |^2) \Id s
    + \frac{ 2 \norm{k_{\scriptscriptstyle{HK}}}_{L^2(\R)}^2 T}{(N-1)\tau}.
  \end{align*}
  Applying Gronwall's inequality yields
  \begin{equation*}
    \sup\limits_{t \in [0,T]} \sup\limits_{i=1,\ldots,N} \E(|X_t^{i,\tau} - Y_t^{i,\tau} |^2 )
    \le   \frac{ 2 \norm{k_{\scriptscriptstyle{HK}}}_{L^2(\R)}^2 T}{(N-1)\tau} \exp\bigg( \frac{(C+\Lambda) T }{\tau} \bigg),
  \end{equation*}
  which proves the proposition.
\end{proof}

In the next step we need to estimate the difference of the solutions to the regularized mean-field SDEs and the non-regularized mean-field SDE. Recall that, by the stochastic Fokker--Planck equations, it is sufficient to consider the associated solutions \( \rho^\tau \) and \(\rho\) of the SPDEs~\eqref{eq: regularised_hk_spde} and \eqref{eq: hk_spde}. For more details regarding this observation we refer to the proof of Theorem~\ref{theorem: existence_mean_field_SDE}.

\begin{proposition}\label{prop: difference of regularized mean-field SDE and the mean-field SDE}
  Suppose Assumption~\ref{ass: sigma_ellipticity+bound} and Assumption~\ref{ass: spde_existence} hold. Let $\rho^\tau$ and $\rho$  be the solutions to the regularized stochastic Fokker--Planck equation~\eqref{eq: regularised_hk_spde} and to the non-regularized stochastic Fokker--Planck equation~\eqref{eq: hk_spde}, respectively, as provided in Corollary~\ref{cor: hk_spde_well_pos}. Then,
  \begin{align*}
    &  \norm{\rho^\tau - \rho}_{S^\infty_{\cF^W}([0,T];L^2(\R))} \\
    &\quad \le  \;   C( \lambda, \Lambda,  T, \norm{k_{\scriptscriptstyle{HK}}}_{L^2(\R)},  \norm{\rho}_{S^\infty_{\cF^W}([0,T];L^2(\R))} \norm{\rho^\tau}_{S^\infty_{\cF^W}([0,T];L^2(\R))} )  \norm{k_{\scriptscriptstyle{HK}}^\tau-k_{\scriptscriptstyle{HK}}}_{L^2(\R)} .
  \end{align*}
\end{proposition}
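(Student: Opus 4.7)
The plan is to mimic the energy/contraction estimate from the proof of Theorem~\ref{theorem: existence_of_hk_spde}, but applied to the difference $\eta_t := \rho^\tau_t - \rho_t$ of the two solutions. Both SPDEs share the same initial datum $\rho_0$ and the same linear part; the only differences are the two non-linear drifts $(k_{\scriptscriptstyle{HK}}^\tau\ast\rho^\tau_t)\rho^\tau_t$ and $(k_{\scriptscriptstyle{HK}}\ast\rho_t)\rho_t$. Subtracting the two equations, $\eta$ solves a linear SPDE in non-divergence form (after the rewriting described in Section~\ref{sec:FP equation}) with additive common noise $-\nu\frac{\d}{\d x}\eta_t\,\d W_t$ and inhomogeneous term
\begin{equation*}
  \frac{\d}{\d x}\Big[(k_{\scriptscriptstyle{HK}}^\tau\ast\rho_t^\tau)\rho_t^\tau-(k_{\scriptscriptstyle{HK}}\ast\rho_t)\rho_t\Big].
\end{equation*}

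First I would split this non-linear difference via the standard telescoping
\begin{align*}
  (k_{\scriptscriptstyle{HK}}^\tau\ast\rho_t^\tau)\rho_t^\tau-(k_{\scriptscriptstyle{HK}}\ast\rho_t)\rho_t
  = \big((k_{\scriptscriptstyle{HK}}^\tau-k_{\scriptscriptstyle{HK}})\ast\rho_t^\tau\big)\rho_t^\tau+(k_{\scriptscriptstyle{HK}}\ast\eta_t)\rho_t^\tau+(k_{\scriptscriptstyle{HK}}\ast\rho_t)\eta_t,
\end{align*}
so that the ``new'' term carries the factor $\|k_{\scriptscriptstyle{HK}}^\tau-k_{\scriptscriptstyle{HK}}\|_{L^2(\R)}$, while the other two are $\eta$-Lipschitz. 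I would then apply It\^o's formula for $\|\eta_t\|_{L^2(\R)}^2$ of Krylov~\cite{krylov2010ito}. Because the common noise is additive and $\rho^\tau,\rho$ (hence $\eta$) lie in $W^{1,2}(\R)$, the martingale part collapses through the identity $\langle\eta_s,\frac{\d}{\d x}\eta_s\rangle_{L^2(\R)}=\tfrac12\int \frac{\d}{\d x}(\eta_s^2)\,\d x=0$, exactly as in the existence proof. This is the crucial point that yields an $L^\infty(\Omega)$-bound rather than just an $L^2(\Omega)$-bound, so the same cancellation must be kept intact here.

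Next, I would estimate each of the three drift contributions against $\frac{\d}{\d x}\eta_s$ by Hölder/Young's convolution inequality, using the uniform bounds
\begin{equation*}
   \|k\ast\zeta\|_{L^\infty(\R)}\le\|k\|_{L^2(\R)}\|\zeta\|_{L^2(\R)},
\end{equation*}
applied with $\zeta\in\{\rho^\tau_s,\rho_s,\eta_s\}$. Combined with Assumption~\ref{ass: sigma_ellipticity+bound}, this produces (after Young's inequality exactly as in \eqref{eq: hk_existence_extra_term_sigma_young}) a bound of the form
\begin{align*}
  \|\eta_t\|_{L^2(\R)}^2
  \le -\frac{\lambda}{4}\int_0^t\Big\|\tfrac{\d}{\d x}\eta_s\Big\|_{L^2(\R)}^2\,\d s
  + C_1\int_0^t\|\eta_s\|_{L^2(\R)}^2\,\d s
  + C_2\,T\,\|k_{\scriptscriptstyle{HK}}^\tau-k_{\scriptscriptstyle{HK}}\|_{L^2(\R)}^2,
\end{align*}
where $C_1,C_2$ depend only on $\lambda,\Lambda,\|k_{\scriptscriptstyle{HK}}\|_{L^2(\R)}$ and the uniform $L^2$-bounds on $\rho^\tau,\rho$ provided by Corollary~\ref{cor: hk_spde_well_pos}. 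Since $\eta_0=0$, Gronwall's lemma yields the claimed estimate pathwise (uniformly in $\omega$), and taking the supremum in $t$ gives the $S^\infty_{\cF^W}([0,T];L^2(\R))$-bound.

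The main obstacle I anticipate is bookkeeping the cancellation of the stochastic integral together with the $\nu^2\|\frac{\d}{\d x}\eta_s\|_{L^2}^2$ It\^o correction: one must keep the full ellipticity constant $\lambda$ of $\sigma^2$ available to absorb \emph{all three} Young-inequality gradient terms without eating into this correction, and simultaneously handle the non-divergence-form lower-order pieces $\frac{\d}{\d x}(\sigma^2)\frac{\d}{\d x}\eta_s$ and $\frac{\d^2}{\d x^2}(\sigma^2)\eta_s$ as in \eqref{eq: hk_existence_extra_term_sigma_young}. Once this accounting is carried out carefully, the argument is essentially identical to the contraction step in Theorem~\ref{theorem: existence_of_hk_spde}, with the role of $\|\zeta-\tilde\zeta\|_{S^\infty([0,T];L^2(\R))}$ now played by the deterministic quantity $\|k_{\scriptscriptstyle{HK}}^\tau-k_{\scriptscriptstyle{HK}}\|_{L^2(\R)}$.
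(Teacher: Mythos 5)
Your proposal is correct and follows essentially the same route as the paper: the same telescoping decomposition of $(k_{\scriptscriptstyle{HK}}^\tau*\rho^\tau)\rho^\tau-(k_{\scriptscriptstyle{HK}}*\rho)\rho$ into a $(k_{\scriptscriptstyle{HK}}^\tau-k_{\scriptscriptstyle{HK}})$-term plus two $\eta$-linear terms, Krylov's It\^o formula for the $L^2$-norm of the difference with the stochastic integral vanishing thanks to the additive common noise, absorption of the gradient terms into the ellipticity via Young/H\"older/Young-convolution, and a pathwise Gronwall argument followed by the supremum over $\omega$. The only cosmetic difference is that the paper carries the factor $\sup_t\norm{\rho_t^\tau}_{L^2(\R)}^4$ explicitly in the inhomogeneous term, which your constant $C_2$ subsumes.
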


\begin{proof}
  To estimate the difference \(\rho_t - \rho_t^\tau\), we notice that
  \begin{align*}
    \rho_t^\tau- \rho_t
    &=  \frac{\dd^2}{\dd x^2} \Bigg( \frac{\sigma^2_t + \nu^2 }{2}  (\rho_t^\tau-\rho_t)\Bigg)  \Id t + \frac{\dd}{\dd x} ((k_{\scriptscriptstyle{HK}}^\tau *\rho_t^\tau)\rho_t^\tau)  \Id t \\
    &\quad- \frac{\dd}{\dd x} ((k_{\scriptscriptstyle{HK}} *\rho_t)\rho_t)  \Id t - \nu \frac{\dd}{\dd x} ( \rho_t^\tau- \rho_t) \Id W_t .
  \end{align*}
  Performing similar computations as in the proof of Theorem~\ref{theorem: existence_of_hk_spde} by using Young's inequality, we get
  \begin{align*}
    &\norm{\rho_t^\tau - \rho_t}_{L^2(\R)}^2\\
    &\quad\le  \;  - \lambda \int\limits_0^t  \norm{\frac{\dd}{\dd x} \rho^\tau_s - \frac{\dd}{\dd x}\rho_s}_{L^2(\R)}^2   \Id s
    -  \int\limits_0^t \bigg \langle (\rho_s - \rho^\tau_s) \frac{\dd}{\dd x} (\sigma_s^2) ,  \frac{\dd}{\dd x} \rho_s - \frac{\dd}{\dd x} \rho^\tau_s \bigg \rangle_{L^2(\R)}  \Id s \\
    &\quad\quad-  2 \int\limits_0^t  \bigg \langle (k_{\scriptscriptstyle{HK}}^\tau *\rho_s^\tau) \rho_s^\tau  - (k_{\scriptscriptstyle{HK}}*\rho_s )\rho_s , \frac{\dd}{\dd x}  \rho^\tau_s - \frac{\dd}{\dd x} \rho_s \bigg \rangle_{L^2(\R)}  \Id s \\
    &\quad\le \;  - \frac{3\lambda}{4} \int\limits_0^t  \norm{\frac{\dd}{\dd x} \rho^\tau_s - \frac{\dd}{\dd x} \rho_s}_{L^2(\R)}^2   \Id s
    +  \frac{\Lambda^2}{\lambda}\int\limits_0^t \norm{\rho_s - \rho^\tau_s}_{L^2(\R)}^2   \Id s \\
    &\quad\quad-  2 \int\limits_0^t  \bigg \langle (k_{\scriptscriptstyle{HK}}^\tau *\rho_s^\tau) \rho_s^\tau  - (k_{\scriptscriptstyle{HK}} *\rho_s )\rho_s , \frac{\dd}{\dd x} \rho^\tau_s - \frac{\dd}{\dd x} \rho_s \bigg\rangle_{L^2(\R)}  \Id s .
  \end{align*}
  Rewriting the last term gives
  \begin{align*}
    &(k_{\scriptscriptstyle{HK}}^\tau *\rho_s^\tau) \rho_s^\tau  - (k_{\scriptscriptstyle{HK}} *\rho_s )\rho_s\\
    &\quad=  ((k_{\scriptscriptstyle{HK}}^\tau-k_{\scriptscriptstyle{HK}}) *\rho_s^\tau) \rho_s^\tau + (k_{\scriptscriptstyle{HK}}*\rho_s^\tau) \rho_s^\tau - (k_{\scriptscriptstyle{HK}} *\rho_s )\rho_s  \\
    &\quad=  ((k_{\scriptscriptstyle{HK}}^\tau-k_{\scriptscriptstyle{HK}}) *\rho_s^\tau) \rho_s^\tau + (k_{\scriptscriptstyle{HK}}*(\rho_s^\tau-\rho_s)) \rho_s^\tau + ((k_{\scriptscriptstyle{HK}}*\rho_s )(\rho_s^\tau-\rho_s).
  \end{align*}
  Hence, for the last two terms we can use Young's inequality, Young's inequality for convolution, mass conservation and \eqref{eq: l_infinity_bound_k_zeta} to obtain
  \begin{align*}
    &\bigg \langle (k_{\scriptscriptstyle{HK}} *\rho_s )(\rho_s^\tau- \rho_s ) + (k_{\scriptscriptstyle{HK}} *(\rho_s^\tau- \rho_s) )\rho_s^\tau  , \frac{\dd}{\dd x} \rho^\tau_s -\frac{\dd}{\dd x}  \rho_s \bigg\rangle_{L^2(\R)} \\
    &\quad\le \norm{k_{\scriptscriptstyle{HK}}}_{L^2(\R)} \norm{\rho_s}_{L^2(\R)} \norm{\rho_s^\tau -\rho_s}_{L^2(\R)}  \norm{ \frac{\dd}{\dd x} \rho_s^\tau - \frac{\dd}{\dd x} \rho_s }_{L^2(\R)}\\
    &\quad\quad+ \norm{k_{\scriptscriptstyle{HK}}}_{L^2(\R)}  \norm{\rho_s^\tau -\rho_s}_{L^2(\R)} \norm{\rho_s^\tau}_{L^2(\R)}  \norm{\frac{\dd}{\dd x} \rho_s^\tau -\frac{\dd}{\dd x} \rho_s }_{L^2(\R)}  \\
    &\quad\le \frac{\lambda}{4}  \norm{ \frac{\dd}{\dd x} \rho_s^\tau - \frac{\dd}{\dd x} \rho_s }_{L^2(\R)}^2  \\
    &\quad\quad+ \frac{1}{\lambda} ( \norm{k_{\scriptscriptstyle{HK}}}_{L^2(\R)} \norm{\rho_s}_{L^2(\R)} \norm{\rho_s^\tau -\rho_s}_{L^2(\R)}+  \norm{k_{\scriptscriptstyle{HK}}}_{L^2(\R)} \norm{\rho_s^\tau -\rho_s}_{L^2(\R)} \norm{\rho_s^\tau}_{L^2(\R)} )^2  \\
    &\quad\le  \frac{\lambda}{2}  \norm{\frac{\dd}{\dd x} \rho_s^\tau -\frac{\dd}{\dd x} \rho_s }_{L^2(\R)}^2 + \frac{2}{\lambda} \norm{k_{\scriptscriptstyle{HK}}}_{L^2(\R)}^2   \norm{\rho_s^\tau -\rho_s}_{L^2(\R)} ^2 (\norm{\rho_s}_{L^2(\R)}^2+\norm{\rho_s^\tau}_{L^2(\R)}^2).
  \end{align*}
  Moreover,
  \begin{align*}
    & \bigg \langle ((k_{\scriptscriptstyle{HK}}^\tau-k_{\scriptscriptstyle{HK}}) *\rho_s^\tau) \rho_s^\tau, \frac{\dd}{\dd x} \rho_s^\tau - \frac{\dd}{\dd x}  \rho_s \bigg \rangle_{L^2(\R)} \\
    &\quad \le \;  \norm{(k_{\scriptscriptstyle{HK}}^\tau-k_{\scriptscriptstyle{HK}})*\rho_s^\tau}_{L^\infty(\R)} \norm{\rho_s^\tau}_{L^2(\R)} \norm{\frac{\dd}{\dd x}  \rho^\tau_s -  \frac{\dd}{\dd x} \rho_s}_{L^2(\R)} \\ \
    &\quad \le \; \norm{k_{\scriptscriptstyle{HK}}^\tau-k_{\scriptscriptstyle{HK}}}_{L^2(\R)} \norm{\rho_s^\tau}_{L^2(\R)} \norm{\rho_s^\tau}_{L^2(\R)} \norm{\frac{\dd}{\dd x} \rho^\tau_s -  \frac{\dd}{\dd x} \rho_s}_{L^2(\R)} \\
    &\quad \le \; \frac{\lambda}{4} \norm{\frac{\dd}{\dd x} \rho^\tau_s -  \frac{\dd}{\dd x} \rho_s}_{L^2(\R)}^2 + \frac{1}{\lambda} \norm{k_{\scriptscriptstyle{HK}}^\tau-k_{\scriptscriptstyle{HK}}}_{L^2(\R)}^2 \norm{\rho_s^\tau}_{L^2(\R)}^4,
  \end{align*}
  where we used Young's inequality for convolutions in the second inequality and Young's inequality for the last step. Consequently, combining the last two estimates with our previous \(L^2\)-norm inequality and absorbing the \(L^2\)-norm of the derivatives we obtain
  \begin{align*}
    &\norm{\rho_t^\tau - \rho_t}_{L^2(\R)}^2\\
    &\quad\le \frac{2 \norm{k_{\scriptscriptstyle{HK}}}_{L^2(\R)}^2+1}{\lambda} \int\limits_0^t (\norm{\rho_s}_{L^2(\R)}^2 +\norm{\rho_s^\tau}_{L^2(\R)}^2 + \Lambda^2) \norm{\rho_s^\tau -\rho_s}_{L^2(\R)}^2 \Id s \\
    &\quad\quad+ \frac{1}{\lambda} \int\limits_0^t \norm{k_{\scriptscriptstyle{HK}}^\tau-k_{\scriptscriptstyle{HK}}}_{L^2(\R)}^2 \norm{\rho_s^\tau}_{L^2(\R)}^4 \Id s  \\
    &\quad \le \frac{2 \norm{k_{\scriptscriptstyle{HK}}}_{L^2(\R)}^2+1}{\lambda} \int\limits_0^t (\norm{\rho_s}_{L^2(\R)}^2 + \norm{\rho_s^\tau}_{L^2(\R)}^2+ \Lambda^2) \norm{\rho_s^\tau -\rho_s}_{L^2(\R)}^2 \Id s\\
    &\quad \quad + \frac{T}{\lambda}\norm{k_{\scriptscriptstyle{HK}}^\tau-k_{\scriptscriptstyle{HK}}}_{L^2(\R)}^2  \sup\limits_{t \in [0,T]} \norm{\rho_t^\tau}_{L^2(\R)}^4.
  \end{align*}
  Applying Gronwall's inequality and using the uniform bound~\eqref{eq: l_infinity_l_2_bound_of_rho}, we obtain
  \begin{align*}
    &\sup\limits_{t \in [0,T]}  \norm{\rho_t^\tau - \rho_t}_{L^2(\R)}^2 \\
    &\quad\le \frac{T}{\lambda}\norm{k_{\scriptscriptstyle{HK}}^\tau-k_{\scriptscriptstyle{HK}}}_{L^2(\R)}^2  \sup\limits_{t \in [0,T]} \norm{\rho_t^\tau}_{L^2(\R)}^4\\
    &\quad\quad\times \exp \bigg( \frac{2 \norm{k_{\scriptscriptstyle{HK}}}_{L^2(\R)}^2+1}{\lambda} \int\limits_0^T  (\norm{\rho_s}_{L^2(\R)}^2+\norm{\rho_s^\tau}_{L^2(\R)}^2 + \Lambda) \Id s  \bigg) \\
    &\quad\le \frac{T}{\lambda}\norm{k_{\scriptscriptstyle{HK}}^\tau-k_{\scriptscriptstyle{HK}}}_{L^2(\R)}^2  \sup\limits_{t \in [0,T]} \norm{\rho_t^\tau}_{L^2(\R)}^4\\
    &\quad\quad\times \exp \bigg( \frac{2T  (\norm{k_{\scriptscriptstyle{HK}}}_{L^2(\R)}^2+1)}{\lambda}  (\norm{\rho}_{S^\infty_{\cF^W}([0,T];L^2(\R))}^2 +\norm{\rho^\tau}_{S^\infty_{\cF^W}([0,T];L^2(\R))}^2 +  \Lambda) \bigg) .
  \end{align*}
  After taking the supremum over \(\omega \in \Omega\), we arrive at
  \begin{align*}
    &\norm{\rho^\tau - \rho}_{S^\infty_{\cF^W}([0,T];L^2(\R))}\\
    &\quad\le  C( \lambda, \Lambda,  T,\norm{k_{\scriptscriptstyle{HK}}}_{L^2(\R)},  \norm{\rho}_{S^\infty_{\cF^W}([0,T];L^2(\R))} \norm{\rho^\tau}_{S^\infty_{\cF^W}([0,T];L^2(\R))} )  \norm{k^\tau-k}_{L^2(\R)} .
  \end{align*}
\end{proof}

\begin{remark}
  Due to Proposition~\ref{prop: difference of regularized mean-field SDE and the mean-field SDE}, we know that the solutions~\(\rho^\tau\) of the regularized stochastic Fokker--Planck equations converges to the solution~\(\rho\) of the non-regularized stochastic Fokker--Planck equation as the interaction force kernels converge in the \(L^2\)-norm for \(\tau \to 0\).
\end{remark}

Finally, we are in a position to state and prove the main theorem of this section.

\begin{theorem}[Propagation of chaos]\label{theorem: propagation_of_chaos_with_noise}
  Suppose Assumption~\ref{ass: sigma_ellipticity+bound} and Assumption~\ref{ass: spde_existence} hold. Let $\rho$ be the solution of the stochastic Fokker--Planck equation~\eqref{eq: hk_spde} and let us denote by
  \begin{equation*}
  \Pi_t^{N,\tau}(\omega) :=\frac{1}{N} \sum\limits_{i=1}^N \delta_{X_t^{i,\tau}(\omega)}
  \end{equation*}
  the empirical measure of the regularized interaction system \(((X_t^{i,\tau}, \tau > 0), i =1 , \ldots, N )\) given by \eqref{eq: regularized_particle_system_with_common_noise}. Then, we have, for all \(t \in [0,T]\),
  \begin{align*}
    &\E(|\qv{\Pi_t^{N,\tau} , \varphi} -  \qv{ \rho_t , \varphi }|^2)\\
    &\quad \le \;  C \Big(\lambda, \Lambda,  T, \norm{k_{\scriptscriptstyle{HK}}}_{L^2(\R)}, \norm{\varphi}_{C^1(\R)}, \norm{\varphi}_{L^2(\R)}, \norm{\rho}_{S^\infty_{\cF^W}([0,T];L^2(\R))},  \norm{\rho^\tau}_{S^\infty_{\cF^W}([0,T];L^2(\R))} \Big)\\
    &\quad\quad \times \bigg(  \frac{1}{N} + \frac{1}{(N-1) \tau } \exp\bigg( \frac{(C+\Lambda) T }{\tau}  \bigg) +  \norm{k^\tau-k}_{L^2(\R)}^2  \bigg)
  \end{align*}
  for any \(\varphi \in \testfunctions{\R}\) and a finite constant \(C\). Consequently, we have
  \begin{equation*}
  \lim\limits_{N\to \infty} \E(|\qv{\Pi_t^{N,\log(N)}, \varphi} -  \qv{ \rho_t , \varphi }|^2) = 0.
\end{equation*}   
\end{theorem}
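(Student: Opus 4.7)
The plan is to control $\langle \Pi_t^N, \varphi\rangle - \langle \rho_t, \varphi\rangle$ by inserting two intermediate quantities, the empirical mean of the regularised mean-field particles and the regularised density average. More precisely, I would write $\langle \Pi_t^N, \varphi\rangle - \langle \rho_t, \varphi\rangle = A_1 + A_2 + A_3$ with $A_1 := \frac{1}{N}\sum_{i=1}^N (\varphi(X_t^{i,\tau}) - \varphi(Y_t^{i,\tau}))$, $A_2 := \frac{1}{N}\sum_{i=1}^N \varphi(Y_t^{i,\tau}) - \langle \rho_t^\tau, \varphi\rangle$, and $A_3 := \langle \rho_t^\tau - \rho_t, \varphi\rangle$. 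Each term corresponds to one of the three contributions on the right-hand side of the stated bound: $A_1$ measures how far the real interacting particles are from their regularised mean-field counterparts, $A_2$ is a (conditional) law-of-large-numbers error, and $A_3$ quantifies the effect of replacing the smoothed kernel by $k_{\scriptscriptstyle{HK}}$.

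For $A_1$, the Lipschitz bound $|\varphi(x)-\varphi(y)| \le \|\varphi\|_{C^1(\R)}|x-y|$, Jensen's inequality and Proposition~\ref{prop: l_2_estimate_real_approx_trajectory_mean_field_trajectory} together give $\E(|A_1|^2) \le \|\varphi\|_{C^1(\R)}^2 \sup_i \E|X_t^{i,\tau}-Y_t^{i,\tau}|^2$, which is exactly the exponential-in-$\tau$ term appearing in the target inequality. For $A_3$, Cauchy--Schwarz yields $|A_3| \le \|\rho_t^\tau-\rho_t\|_{L^2(\R)}\|\varphi\|_{L^2(\R)}$, whence Proposition~\ref{prop: difference of regularized mean-field SDE and the mean-field SDE} converts this into a constant multiple of $\|k_{\scriptscriptstyle{HK}}^\tau - k_{\scriptscriptstyle{HK}}\|_{L^2(\R)}^2$, absorbing the $\rho$-norms into the generic constant $C$.

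The delicate step is $A_2$. Here the key structural fact, established in Theorem~\ref{theorem: existence_mean_field_SDE} and Corollary~\ref{cor: solution to regularized mean-field SDE}, is that $(Y_t^{1,\tau},\ldots, Y_t^{N,\tau})$ are conditionally i.i.d.\ given $\cF_t^W$ with common conditional density $\rho_t^\tau$. Applying Lemma~\ref{lemma: explicit_conditional_expectation} with $\mathcal{G} = \cF_t^W$, I can identify $\E(\varphi(Y_t^{i,\tau}) \mid \cF_t^W) = \langle \rho_t^\tau, \varphi\rangle$ almost surely, so setting $Z_i := \varphi(Y_t^{i,\tau}) - \langle \rho_t^\tau, \varphi\rangle$ gives $A_2 = \frac{1}{N}\sum_i Z_i$ with centred increments in the conditional sense. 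Conditional independence then forces $\E(Z_i Z_j \mid \cF_t^W) = 0$ for $i\neq j$, so the double sum collapses to the diagonal and $\E(|A_2|^2) = \frac{1}{N^2}\sum_{i=1}^N \E(Z_i^2) \le \frac{4}{N}\|\varphi\|_{L^\infty(\R)}^2$, which is again controlled by $\|\varphi\|_{C^1(\R)}^2$ via Morrey's inequality.

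Combining the three estimates and enlarging the constant appropriately produces the claimed inequality. The principal obstacle is the $A_2$ term: without exploiting the conditional independence obtained from the characterisation of $\rho_t^\tau$ as the conditional density of $Y_t^{i,\tau}$, only a weaker rate than $1/N$ would be available. This is also the place where the presence of the environmental noise $W$ genuinely changes the structure of the classical Snitzman-type argument, since the unconditional particles are \emph{not} independent and the law-of-large-numbers reasoning must be performed conditionally on $\cF_t^W$.
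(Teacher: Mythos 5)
Your proposal is correct and follows essentially the same route as the paper: the identical three-term decomposition, the Lipschitz bound combined with Proposition~\ref{prop: l_2_estimate_real_approx_trajectory_mean_field_trajectory} for the first term, the conditional law of large numbers (vanishing cross terms via conditional independence given $\cF_t^W$) for the second, and Cauchy--Schwarz with Proposition~\ref{prop: difference of regularized mean-field SDE and the mean-field SDE} for the third. The only cosmetic difference is that the paper bounds the variance term directly by $C(\norm{\varphi}_{L^\infty(\R)})/N$ without appealing to Morrey's inequality, which is unnecessary since $\varphi\in\testfunctions{\R}$ is bounded.
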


\begin{proof}
  We compute
  \begin{align}\label{eq: empirical_measure_estimate}
    &\E(|\qv{\Pi_t^{N,\tau} , \varphi}- \qv{\rho_t^{\tau}, \varphi}|^2) \nonumber  \\
    &\quad= \E \bigg( \bigg(\frac{1}{N} \sum\limits_{i=1}^N \varphi(X_t^{i,\tau}) - \int_{\R} \rho_t^\tau(x) \varphi(x) \Id x \bigg)^2 \bigg) \nonumber  \\
    &\quad=  \, 2\E \bigg(\bigg(\frac{1}{N} \sum\limits_{i=1}^N \varphi(X_t^{i,\tau}) - \frac{1}{N} \sum\limits_{i=1}^N \varphi(Y_t^{i,\tau})\bigg)^2 \bigg)\nonumber  \\
    &\quad\quad+ 2 \E\bigg( \bigg( \frac{1}{N} \sum\limits_{i=1}^N \varphi(Y_t^{i,\tau}) - \int_{\R} \rho_t^\tau(x) \varphi(x) \Id x \bigg)^2 \bigg) \nonumber  \\
    &\quad\le  \, \frac{2}{N^2}\bigg(\sum\limits_{i=1}^N  \E( |\varphi(X_t^{i,\tau}) - \varphi(Y_t^{i,\tau})|^2)^{1/2} \bigg)^2 \nonumber  \\
    &\quad\quad+  2 \E \bigg( \frac{1}{N} \sum\limits_{i=1}^N \varphi(Y_t^{i,\tau}) - \int_{\R} \rho_t^\tau(x) \varphi(x) \Id x \bigg)^2 \bigg)  \nonumber \\
    &\quad\le  \, 2 \sup\limits_{i=1,\ldots,N}  \E( |\varphi(X_t^{i,\tau}) - \varphi(Y_t^{i,\tau})|^2) +  2 \E \bigg(\bigg( \frac{1}{N} \sum\limits_{i=1}^N \varphi(Y_t^{i,\tau}) - \int_{\R} \rho_t^\tau(x) \varphi(x) \Id x \bigg)^2 \bigg),
  \end{align}
  where we used Minkowski's inequality in the third step. Now, by Proposition~\ref{prop: l_2_estimate_real_approx_trajectory_mean_field_trajectory} and
  \begin{equation*}
    |\varphi(X_t^{i,\tau}) - \varphi(Y_t^{i,\tau})|^2 \le \norm{\frac{\dd}{\dd x} \varphi}_{L^\infty}^2 |X_t^{i,\tau}-Y_t^{i,\tau}|^2,
  \end{equation*}
  we can estimate the first term by \( \norm{\frac{\dd}{\dd x} \varphi}_{L^\infty}^2  \frac{ 2 \norm{k_{\scriptscriptstyle{HK}}}_{L^2(\R)}^2 T}{(N-1)\tau} \exp\bigg( \frac{(C+\Lambda) T }{\tau} \bigg)\). For the second term we write out the square to obtain
  \begin{align*}
    & \frac{2}{N^2} \sum\limits_{i,j=1}^N \E \bigg(  \bigg(\varphi(Y_t^{i,\tau}) - \int_{\R} \rho_t^\tau(y) \varphi(y) \Id y   \bigg)  \bigg (\varphi(Y_t^{j,\tau}) - \int_{\R} \rho_t^\tau(y) \varphi(y) \Id y  \bigg)  \bigg)  \\
    &\quad= \frac{2}{N^2} \sum\limits_{i,j=1}^N \E \bigg( \varphi(Y_t^{i,\tau}) \varphi(Y_t^{j,\tau}) - \varphi(Y_t^{i,\tau}) \int_{\R} \rho_t^\tau(y) \varphi(y)  \Id y  \\
    &\quad\quad- \varphi(Y_t^{j,\tau}) \int_{\R} \rho_t^\tau(y) \varphi(y) \Id y + \bigg( \int_{\R} \rho_t^\tau(y)\varphi(y) \Id y \bigg)^2  \bigg).
  \end{align*}
  Now, using the fact that \(\rho_t^\tau\) is the conditional distribution of \(Y^{i,\tau}\) with respect to \(\cF_t^W\), we find
  \begin{align*}
    \E \bigg( \varphi(Y_t^{i,\tau}) \int_{\R} \rho_t^\tau(y) \varphi(y)  \Id y  \bigg)
    &= \E\bigg( \E\bigg( \varphi(Y_t^{i,\tau}) \int_{\R} \rho_t^\tau(y) \varphi(y)  \Id y \bigg| \cF_t^W \bigg)  \bigg) \\
    &= \E\bigg( \int_{\R} \rho_t^\tau(y) \varphi(y)  \Id y  \, \E( \varphi(Y_t^{i,\tau}) | \cF_t^W )  \bigg) \\
    &= \E\bigg( \bigg( \int_{\R} \rho_t^\tau(y) \varphi(y)  \Id y \bigg)^2  \bigg)  .
  \end{align*}
  Since \((Y_t^{i,\tau}, i = 1, \ldots, N)\) have identical conditional distributions given~\(\cF_t^W\), the same equality holds for~\(j \) instead of~\(i\). Additionally, using the fact that \(Y_t^{i,\tau},Y_t^{j,\tau}\) are conditionally independent for \(i \neq j \), we obtain
  \begin{align*}
    \E \big( \varphi(Y_t^{i,\tau}) \varphi(Y_t^{j,\tau}) \big)
    &= \E\big( \E( \varphi(Y_t^{i,\tau}) \varphi(Y_t^{j,\tau}) | \cF_t^W )\big) \\
    &= \E\big( \E( \varphi(Y_t^{i,\tau}) | \cF_t^W ) \E( \varphi(Y_t^{j,\tau}) | \cF_t^W ) \big) \\
    &= \E \bigg( \bigg( \int_{\R} \rho_t^\tau(y) \varphi(y)  \dd y \bigg)^2 \bigg)
  \end{align*}
  for the cross terms. Hence, the cross terms vanish and we can estimate the second term in~\eqref{eq: empirical_measure_estimate} by
  \begin{equation*}
    \frac{2}{N^2} \sum\limits_{i=1}^N \E  \bigg( \bigg(\varphi(Y_t^{i,\tau}) - \int_{\R} \rho_t^\tau(y) \varphi(y) \dd y \bigg)^2   \bigg)
    \le \frac{C(\norm{\phi}_{L^\infty(\R)} ) }{N}
  \end{equation*}
  for some finite constant \(C(\norm{\varphi}_{L^\infty(\R)} ) \), which depends only on \(\varphi\). Putting everything together, we find
  \begin{align*}
    \E(|\qv{\Pi_t^{N,\tau} , \varphi}- \qv{\rho_t^{\tau}, \varphi}|^2) &\le  \norm{\frac{\dd}{\dd x} \varphi}_{L^\infty}^2  \frac{ 2 \norm{k_{\scriptscriptstyle{HK}}}_{L^2(\R)}^2 T}{(N-1)\tau} \exp\bigg( \frac{(C+\Lambda) T }{\tau} \bigg)  +   \frac{C(\norm{\varphi}_{L^\infty(\R)} ) }{N} \\
    &\le C(\norm{k_{\scriptscriptstyle{HK}}}_{L^2(\R)}, T,\norm{\varphi}_{C^1(\R)})\bigg(  \frac{1}{N} + \frac{1}{(N-1)\tau} \exp\bigg( \frac{C T }{\tau} \bigg) \bigg).
  \end{align*}
  Next, using H{\"o}lder's inequality and Proposition~\ref{prop: difference of regularized mean-field SDE and the mean-field SDE}, we discover
  \begin{align*}
    &\E(|\qv{ \rho^{\tau}_t , \varphi } - \qv{ \rho_t , \varphi }|^2)
    \le \E(\norm{\varphi}_{L^2(\R)}^2 \norm{\rho^\tau_t- \rho_t}_{L^2(\R)}^2 )
    \le \norm{\varphi}_{L^2(\R)}^2   \norm{\rho^\tau - \rho}_{S^\infty_{\cF}([0,T];L^2(\R))}^2 \\
    &\quad\le \norm{\varphi}_{L^2(\R)}^2  C( \lambda, \Lambda,  T,\norm{k_{\scriptscriptstyle{HK}}}_{L^2(\R)} , \norm{\rho}_{S^\infty_{\cF^W}([0,T];L^2(\R))} \norm{\rho^\tau}_{S^\infty_{\cF^W}([0,T];L^2(\R))} )  \norm{k^\tau-k}_{L^2(\R)}^2.
  \end{align*}
  Therefore, combining this estimate with the previous one we obtain
  \begin{align*}
    &\E(|\qv{\Pi_t^{N,\tau}, \varphi} -  \qv{ \rho_t , \varphi }|^2) \\
    &\quad\le  \;  2 \E(|\qv{\Pi_t^{N,\tau} , \varphi}- \qv{\rho_t^{\tau}, \varphi}|^2)
    + 2 \E(|\qv{ \rho^\tau_t , \varphi } - \qv{ \rho_t , \varphi }|^2) \\
    &\quad\le  \;   C \Big(\lambda, \Lambda,  T, \norm{k_{\scriptscriptstyle{HK}}}_{L^2(\R)} , \norm{\varphi}_{C^1(\R)}, \norm{\varphi}_{L^2(\R)}, \norm{\rho}_{S^\infty_{\cF^W}([0,T];L^2(\R))},  \norm{\rho^\tau}_{S^\infty_{\cF^W}([0,T];L^2(\R))}
    \Big)  \\
    &\quad\quad \times \bigg(  \frac{1}{N} + \frac{1}{(N-1)\tau} \exp\bigg( \frac{(C+\Lambda) T }{\tau}  \bigg) +  \norm{k^\tau-k}_{L^2(\R)}^2 \bigg).
  \end{align*}
\end{proof}

\begin{remark}
  While we focus in the present section on the interaction force $k_{HK}(x)=\indicator{[0,R]}(|x|)x$, as used in the HK model, all results of Section~\ref{sec:mean field limit} extend verbatim to general interaction forces $k\in L^1(\R)\cap L^2(\R)$ such that there exists a sequence $(k^\tau)_{\tau \in \N}\subset \testfunctions{\R}$ satisfying the conditions (i)-(iii) in Section~\ref{subsec: HK model}.
\end{remark}

\bibliography{quellen}{}
\bibliographystyle{amsalpha}

\end{document}